\newtheorem{theorem}{Theorem}[section]
\newtheorem{lemma}[theorem]{Lemma}
\newtheorem{corollary}[theorem]{Corollary}
\newtheorem{definition}{Definition}[section]
\theoremstyle{remark} \newtheorem{remark}[theorem]{Remark} \theoremstyle{definition} 
\numberwithin{equation}{section}
\newif\ifcomment \commentfalse \def\commentON{\commenttrue}
\long\outer\def\BC#1\EC{\ifcomment \sloppy \par \# \ldots\dotfill {\em #1} \dotfill \# \par \fi } \commentON
\newcommand{\remove}[1]{}
\newcommand{\eps}{\ensuremath{\varepsilon}}
\newcommand{\R}{{{\mathbb{R}}}}
\newcommand{\N}{{{\mathbb{N}}}}
\newcommand{\Z}{{{\mathbb{Z}}}}
\newcommand{\C}{{{\mathbb{C}}}}
\newcommand{\p}{{{\mathbb{P}}}}
\newcommand{\dx}{\, dx}
\newcommand{\dt}{\, dt}
\newcommand{\dz}{\, dz}
\newenvironment{Definitions}% Definition of assumptions
{%

\begin{enumerate}}%
{\end{enumerate} }
\newcommand{\del}{\partial} \newcommand{\ra}{\rightarrow}
\newcommand{\Levy}{\ensuremath{\mathcal{L}}}
\begin{document}

\title[DG for fractional convection-diffusion equations]
{The discontinuous Galerkin method\\ for fractional degenerate\\ convection-diffusion equations}

\author[{S.~Cifani}]{{Simone Cifani}}
\address[Simone Cifani]{\\ Department of Mathematics\\
Norwegian University of Science and Technology (NTNU)\\
 N-7491 Trondheim, Norway}
 \email[]{simone.cifani\@@math.ntnu.no}
 \urladdr{http://www.math.ntnu.no/\~{}cifani/}

\author[{E.R.~Jakobsen}]{{Espen R. Jakobsen}}
\address[Espen R. Jakobsen]{\\ Department of Mathematics\\ Norwegian University of Science and Technology (NTNU)\\
 N-7491 Trondheim, Norway}
\email[]{erj\@@math.ntnu.no} \urladdr{http://www.math.ntnu.no/\~{}erj/}

\author[{K.H.~Karlsen}]{{Kenneth H. Karlsen}}
\address[Kenneth H. Karlsen]{\\ Centre of Mathematics for
Applications (CMA)\\ Department of Mathematics\\ University of Oslo\\ P.O. Box 1053, Blindern\\ N-0316 Oslo, Norway} \email[]{kennethk\@@math.uio.no}
\urladdr{http://folk.uio.no/kennethk/}

\keywords{Convection-diffusion equations, degenerate parabolic, conservation laws, fractional diffusion, entropy solutions, direct/local discontinuous Galerkin
methods}

\thanks{This research was supported by the Research Council of Norway (NFR) through the project "Integro-PDEs: Numerical methods, Analysis, and Applications to Finance". The work of K.~H.~Karlsen was also supported through a NFR Outstanding Young Investigator Award.
This article was written as part of the international research program on Nonlinear Partial Differential Equations at the Centre for Advanced Study at the
Norwegian Academy of Science and Letters in Oslo during the academic year 2008--09.}

\begin{abstract}
We propose and study discontinuous Galerkin methods for strongly degenerate convection-diffusion equations perturbed by a fractional diffusion (L\'evy)
operator. We prove various stability estimates along with convergence results toward properly defined (entropy) solutions of linear and nonlinear equations.
Finally, the qualitative behavior of solutions of such equations are illustrated through numerical experiments.
\end{abstract}

\maketitle

\section{Introduction}
We consider degenerate convection-diffusion equations perturbed by a fractional diffusion (L\'{e}vy) operator; more precisely, problems of the form
\begin{align}\label{1}
\begin{cases}
u_{t}+f(u)_{x}=(a(u)u_x)_x+b\Levy[u]&\quad(x,t)\in Q_T=\mathbb{R}\times(0,T),\\
u(x,0)=u_{0}(x)&\quad x\in\mathbb{R},
\end{cases}
\end{align}
where $f,a:\mathbb{R}\rightarrow\mathbb{R}$ ($a\geq0$ and bounded) are Lipschitz continuous functions, $b\geq0$ is a constant, and $\Levy$ is a nonlocal
operator whose singular integral representation reads (cf.~\cite{Landkof,Droniou/Imbert})
\begin{align*}
\Levy[u(x,t)]=c_{\lambda}\int_{|z|>0}\frac{u(x+z,t)-u(x,t)}{|z|^{1+\lambda}}\dz,\ \lambda\in(0,1)\text{ and }c_{\lambda}>0.
\end{align*}
For sake of simplicity, we assume $f(0)=0$. The initial datum $u_0:\R\rightarrow\R$ is chosen in different spaces (cf. Theorems \ref{DDGlin}, \ref{LDGlin} and
\ref{th:existence}) depending on whether the equations are linear or nonlinear.

The operator $\Levy$ is known as the fractional Laplacian (a nonlocal generalization of the Laplace operator) and can also be defined in terms of its Fourier
transform:
\begin{align}\label{fourier}
\widehat{\Levy[u(\cdot,t)]}(\xi)=-|\xi|^{\lambda}\hat{u}(\xi,t).
\end{align}
As pointed out in \cite{Alibaud,Droniou/Imbert,Landkof}, $u(\cdot,t)$ has to be rather smooth with suitable growth at infinity for the quantity $\Levy[u]$ to
be pointwise well defined. However, smooth solutions of \eqref{1} do not exist in general (shocks may develop), and weak entropy solutions have to be
considered, cf.~Definition \ref{def:entropy} and Lemma \ref{lemmanonloc} below.

Nonlocal equations like \eqref{1} appear in different areas of research. For instance, in mathematical finance, option pricing models based on jump processes
(cf.~\cite{Cont/Tankov}) give rise to linear partial differential equations with nonlocal terms. Nonlinear equations appear in dislocation dynamics,
hydrodynamics and molecular biology \cite{Espedal/Karlsen}; applications to semiconductors devices and explosives can also be found \cite{Matalon}. For more
information about the possible applications of such equations we refer the reader to the detailed discussions in \cite{Alibaud},
\cite{Alibaud/Droniou/Vovelle}, and \cite{Droniou}.

Equation \eqref{1} consists of three different terms: nonlinear convection $f(u)_x$, nonlinear diffusion $(a(u)u_x)_x$, and fractional diffusion $\Levy[u]$. It
is expected that the effect of a diffusion operator is that solutions become smoother than the prescribed initial data.  In our case, however, $a$ can be
strongly degenerate (i.e., vanish on intervals of positive length), and hence solutions can exhibit shocks. We refer to \cite{Evje/Karlsen,Espedal/Karlsen} for
the case when $b=0$, and to \cite{Alibaud/Droniou/Vovelle,Cifani/ERJ/KHK} for the case when $\lambda\in(0,1)$ and $a\equiv0$. The issue at stake here is that
the fractional diffusion operator may not be strong enough to prevent solutions of \eqref{1} from developing discontinuities. However, and as expected, in the
linear case ($f(u)=cu$, $a(u)=au$ with $c\in\R$, $a>0$), some regularity can be proved (cf.~Lemma \ref{smoothsol}).

An ample literature is available on numerical methods for computing entropy solutions of degenerate convection-diffusion equations,
cf.~\cite{Cockburn/Shu,Espedal/Karlsen,Evje/Karlsen,Evje/Karlsen2,Holden/Karlsen/Lie1,Holden/Karlsen/Lie2,Karlsen/Risebro1,Karlsen/Risebro2,Hailiang/Jue}. To
the best of our knowledge, there are no works on nonlocal versions of these equations. However, for the special case of fractional conservation laws ($a\equiv
0$) there are a few recent works \cite{Dedner/Rohde,Droniou,Cifani/ERJ/KHK}. Dedner and Rohde \cite{Dedner/Rohde} introduced a general class of difference
methods for equations appearing in radiative hydrodynamics.  Droniou \cite{Droniou} devised a classs difference method for \eqref{1} ($a=0$) and proved
convergence. Cifani \emph{et al.}~\cite{Cifani/ERJ/KHK} applied the discontinuous Galerkin method to \eqref{1} ($a=0$) and proved error estimates. Finally, let
us mention that the discontinuous Galerkin method has also been used to numerically solve nonlinear convection problems appended with possibly nonlocal
dissipative terms in \cite{Jenny,Jenny/Rodhe}.

The discontinuous Galerkin (DG hereafter) method is a well established method for approximating solutions of convection \cite{Cockburn} and
convection-diffusion equations \cite{Cockburn/Shu,Hailiang/Jue}. To obtain a DG approximation of a nonlinear equation, one has to pass to the weak formulation,
do integration by parts, and replace the nonlinearities with suitable numerical fluxes (fluxes which enforce numerical stability and convergence). Available DG
methods for convection-diffusion equations are the local DG (LDG hereafter) \cite{Cockburn/Shu} and the direct DG (DDG hereafter) \cite{Hailiang/Jue}. In the
LDG method, the convection-diffusion equation is rewritten as a first order system and then approximated by the DG method for conservation laws. In the DDG
method, the DG method is applied directly to the convection-diffusion equation after a suitable numerical flux has been derived for the diffusion term.

This paper is a continuation of our previous work on DG methods for fractional conservation laws \cite{Cifani/ERJ/KHK}. We devise and study DDG and LDG
approximations of \eqref{1}, we prove that both approximations are $L^2$-stable and, whenever linear equations are considered, high-order accurate. In the
nonlinear case, we work with an entropy formulation for \eqref{1} which generalizes the one in \cite{Wu/Yin,Evje/Karlsen}, and we show that the DDG method
converges toward an entropy solution when piecewise constant elements are used. To do so, we extend the results in \cite{Evje/Karlsen} to our nonlocal setting.
Finally, we present numerical experiments shedding some light on the qualitative behavior of solutions of fractional, strongly degenerate convection-diffusion
equations.

\section{A semi-discrete method}
Let us choose a spatial grid $x_{i}=i\Delta x$ ($\Delta x>0$, $i\in\mathbb{Z}$), and label $I_{i}=(x_{i},x_{i+1})$. We denote by $P^{k}(I_{i})$ the space of
all polynomials of degree at most $k$ with support on $I_i$, and let
\begin{align*}
V^k=\{v:v|_{I_i}\in P^k(I_i),\ i\in\Z\}.
\end{align*}
Let us introduce the Legendre polynomials $\{\varphi_{0,i},\varphi_{1,i},\ldots,\varphi_{k,i}\}$, where $\varphi_{j,i}\in P^{j}(I_{i})$. Each function in
$P^{k}(I_{i})$ can be written as a linear combination of these polynomials.

We recall the following well known properties of the Legendre polynomials: for all $i\in\mathbb{Z}$,
\begin{align*}
\int_{I_{i}}\varphi_{p,i}\varphi_{q,i}\dx&=\left\{
\begin{array}{cl}
\frac{\Delta x}{2q+1}&\text{for }p=q\\
0&\text{otherwise}
\end{array}
\right.,\ \varphi_{p,i}(x_{i+1}^{-})=1\text{ and }\varphi_{p,i}(x_{i}^{+})=(-1)^{p},
\end{align*}
where $\varphi(x_i^{\pm})=\lim_{s\rightarrow x_i^{\pm}}\varphi(s)$.

The following fractional Sobolev space is also needed in what follows (see, e.g., \cite{Abels/Kassmann} or \cite[Section 6]{Folland}):
\begin{align*}
\|u\|_{H^{\lambda/2}(\mathbb{R})}^{2}=\|u\|_{L^{2}(\mathbb{R})}^{2} +|u|_{H^{\lambda/2}(\mathbb{R})}^2,
\end{align*}
with semi-norm $|u|_{H^{\lambda/2}(\mathbb{R})}^2= \int_\R\int_\R\frac{(u(z)-u(x))^2}{|z-x|^{1+\lambda}}\dz\dx$. Finally, let us introduce the operators
\begin{align*}
[p(x_i)]=p(x_i^+)-p(x_i^-),\quad \overline{p(x_i)}=\frac{1}{2}(p(x_i^+)+p(x_i^-)).
\end{align*}
 From now on we split our exposition into two parts, one dedicated to
 the DDG method and another one dedicated to the LDG method.

\subsection{DDG method}
Let us multiply \eqref{1} by an arbitrary $v\in P^k(I_i)$, integrate over $I_i$, and use integration by parts, to arrive at
\begin{align}\label{Ds1}
\begin{split}
\int_{I_{i}}u_{t}v\ -&\int_{I_{i}}f(u)v_{x}+f(u_{i+1})v_{i+1}^{-}-f(u_{i})v_{i}^{+}\\
&+\int_{I_i}a(u)u_xv_x-h(u_{i+1},u_{x,i+1})v_{i+1}^-+h(u_{i},u_{x,i})v_{i}^+ =b\int_{I_{i}}\Levy[u]v,
\end{split}
\end{align}
where $f(u_i)=f(u(x_i))$, $h(u,u_x)=a(u)u_x$ and $(u_i,u_{x,i})=(u(x_i),u_x(x_i))$. Let us introduce the Lipschitz continuous E-flux (a consistent and monotone
flux),
\begin{align}\label{eflux}
\hat{f}(u_i)=\hat{f}(u(x_i^-),u(x_i^+)).
\end{align}
Note that since $\hat{f}$ is consistent ($\hat{f}(u,u)=f(u)$) and monotone (increasing w.r.t.~ its first variable and decreasing w.r.t~ its second variable),
\begin{align}\label{eeeflux}
\int_{u_i^-}^{u_i^+}\big[f(x)-\hat{f}(u_i^-,u_i^+)\big]\dx\geq0.
\end{align}
Following Jue and Liu \cite{Hailiang/Jue}, let us also introduce the flux
\begin{align*}
\hat{h}(u_i)&=\hat{h}(u(x_i^-),\ldots,\partial_x^ku(x_i^-),u(x_i^+),\ldots,\partial_x^ku(x_i^+))\\
&=\beta_0\frac{[A(u_i)]}{\Delta x}+\overline{A(u_i)_x} +\sum_{m=1}^{\lfloor k/2\rfloor}\beta_m\Delta x^{2m-1}[\partial_x^{2m}A(u_i)],
\end{align*}
where $A(u)=\int^{u}a$ and the weights $\{\beta_0,\ldots,\beta_{\lfloor k/2\rfloor}\}$ fulfill the following admissibility condition: there exist
$\gamma\in(0,1)$ and $\alpha\geq0$ such that
\begin{align}\label{admiss}
\sum_{i\in\Z}\hat{h}(u_i)[u_i]\geq\alpha \sum_{i\in\Z}\frac{[A(u_i)]}{\Delta x}[u_i] -\gamma\sum_{i\in\Z}\int_{I_i}a(u)(u_x)^2.
\end{align}
Note that the numerical flux $\hat h$ is an approximation of $A(u_i)_x=a(u(x_i))u_x(x_i)$ involving the average $\overline{A(u_i)_x}$ and the jumps of even
order derivatives of $A(u_i)$ up to $m=k/2$. For example, if $k=0$ and $\beta_0=1$, then
$$
\hat{h}(u_i)=\frac{1}{\Delta x} [A(u_i)]=\frac{A(u(x_i^+))-A(u(x_i^-))}{\Delta x},$$ and this function satisfies condition \eqref{admiss}. In this case
($k=0$),
$$\overline{A(u_i)_x}=\overline{a(u(x_i))\partial_xu(x_i)}
=\frac{1}{2}\Big(a(u(x_i^+))\partial_xu(x_i^+)+a(u(x_i^-))\partial_xu(x_i^-)\Big)=0.$$ When $k\geq2$, some extra differentiability on $a$ is required. For
example, with $k=2$,
\begin{align*}
\sum_{m=1}^{\lfloor k/2\rfloor}\beta_m\Delta x^{2m-1}[\partial_x^{2m}A(u_i)]&=\beta_1\Delta x[\partial_x^{2}A(u_i)]=\beta_1\Delta
x[a'(u_i)(\partial_xu_i)^2+a(u_i)\partial_x^{2}u_i].
\end{align*}
We see that the flux $\hat h$ is locally Lipschitz if $a$ is sufficently regular, and that $\hat h(0)=0$ for all $k$. Let us rewrite \eqref{Ds1} as
\begin{align}\label{Ds2}
\begin{split}
\int_{I_{i}}u_{t}v\ -&\int_{I_{i}}f(u)v_{x}+\hat{f}(u_{i+1})v_{i+1}^{-}-\hat{f}(u_{i})v_{i}^{+}\\
&+\int_{I_i}a(u)u_xv_x-\hat{h}(u_{i+1})v_{i+1}^-+\hat{h}(u_{i})v_{i}^+=b\int_{I_{i}}\Levy[u]v,
\end{split}
\end{align}
and use the initial condition
\begin{align}\label{initial_condition}
\begin{split}
\int_{I_{i}}u(x,0)v(x)\dx=\int_{I_{i}}u_0(x)v(x)\dx.
\end{split}
\end{align}
The DDG method consists of finding functions $\hat{u}:Q_T\rightarrow\mathbb{R}$, $\hat{u}(\cdot,t)\in V^k$, and
\begin{align}\label{udef}
\hat{u}(x,t)=\sum_{i\in\mathbb{Z}}\sum_{p=0}^{k}U_{p,i}(t)\varphi_{p,i}(x),
\end{align}
which satisfy \eqref{Ds2}-\eqref{initial_condition} for all $v\in P^{k}(I_i)$, $i\in\mathbb{Z}$.

\subsection{LDG method}
Let us write $a(u)u_x=\sqrt{a(u)}g(u)_x$, where $g(u)=\int^{u}\sqrt{a}$, and turn equation \eqref{1} into the following system of equations
\begin{align}\label{prob}
\left\{
\begin{array}{ll}
u_t+(f(u)-\sqrt{a(u)}q)_x=b\Levy[u],\\
q-g(u)_x=0.
\end{array}
\right.
\end{align}
Let us introduce the notation $\mathbf{w}=(u,q)'$ (here $'$ denotes the transpose), and write
\begin{align*}
\mathbf{h}(\mathbf{w})=\mathbf{h}(u,q)=\left(
\begin{array}{c}
h_u(\mathbf{w})\\
h_q(u)
\end{array}
\right)=\left(
\begin{array}{c}
f(u)-\sqrt{a(u)}q\\
-g(u)
\end{array}
\right).
\end{align*}
Let us multiply each equation in \eqref{prob} by arbitrary $v_u,v_q\in P^{k}(I_{i})$, integrate over the interval $I_{i}$, and use integration by parts, to
arrive at
\begin{align*}
&\int_{I_{i}}\partial_tuv_{u}-\int_{I_{i}}h_u(\mathbf{w})\partial_{x}v_u +h_u(\mathbf{w}_{i+1})v_{u,i+1}^{-}-h_u(\mathbf{w}_{i})v_{u,i}^{+}
=b\int_{I_{i}}\Levy[u]v_u,\\
&\int_{I_{i}}qv_q-\int_{I_{i}}h_q(u)\partial_xv_{q}+h_q(u_{i+1}) v_{q,i+1}^{-}-h_q(u_{i})v_{q,i}^{+}=0,
\end{align*}
where $h_u(\mathbf{w}_{i})=h_u(u_{i},q_i)$, $u_i=u(x_i)$, $q_i=q(x_i)$, $v_{u,i}^-=v_u(x_i^-)$ and $v_{u,i}^+=v_u(x_i^+)$. Following Cockburn and Shu
~\cite{Cockburn/Shu}, we introduce the numerical flux
\begin{align}\label{Lflux}
\hat{\mathbf{h}}(\mathbf{w}_i^-,\mathbf{w}_i^+)=\left(
\begin{array}{c}
\hat{h}_u(\mathbf{w}_i^-,\mathbf{w}_i^+)\\
\hat{h}_q(u_i^-,u_i^+)
\end{array}
\right)=\left(
\begin{array}{c}
\frac{[F(u_i)]}{[u_i]}-\frac{[g(u_i)]}{[u_i]}\overline{q_i}\\
-\overline{g(u_i)}
\end{array}
\right)-\C[\mathbf{w}_i],
\end{align}
where $F(u)=\int^{u}f$, $\C=\left(
\begin{array}{cc}
c_{11} & c_{12}\\
-c_{12} & 0
\end{array}
\right)$,
\begin{align*}
\begin{split}
c_{11}=\frac{1}{[u_i]}\left(\frac{[F(u_i)]}{[u_i]}-\hat{f}(u_i^-,u_i^+)\right),
\end{split}
\end{align*}
$c_{12}=c_{12}(\mathbf{w}_i^-,\mathbf{w}_i^+)$ is Lipschitz continuous in all its variables, and $c_{12}=0$ whenever $a=0$ or
$\mathbf{w}_i^-,\mathbf{w}_i^+=0$. Note that $c_{11}\geq0$ since $\hat{f}$ is an E-flux and, thus, the matrix $\C$ is semipositive definite.

The LDG method consists of finding $\tilde{\mathbf{w}}=(\tilde{u},\tilde{q})'$, where
\begin{align*}
\tilde{u}(x,t)=\sum_{i\in\mathbb{Z}}\sum_{p=0}^{k}U_{p,i}(t)\varphi_{p,i}(x)\quad\text{and}\quad
\tilde{q}(x,t)=\sum_{i\in\mathbb{Z}}\sum_{p=0}^{k}Q_{p,i}(t)\varphi_{p,i}(x)
\end{align*}
are functions satisfying
\begin{align}\label{systdisc}
\begin{split}
&\int_{I_{i}}\partial_tuv_{u}-\int_{I_{i}}h_u(\mathbf{w})\partial_{x}v_u+\hat{h}_u(\mathbf{w}_{i+1})v_{u,i+1}^{-}
-\hat{h}_u(\mathbf{w}_{i})v_{u,i}^{+}=b\int_{I_{i}}\Levy[u]v_u,\\
&\int_{I_{i}}qv_q-\int_{I_{i}}h_q(u)\partial_xv_{q}+\hat{h}_q(u_{i+1})v_{q,i+1}^{-}-\hat{h}_q(u_{i})v_{q,i}^{+}=0,
\end{split}
\end{align}
for all $v_u,v_q\in P^{k}(I_i)$, $i\in\mathbb{Z}$, and initial conditions for $u$ and $q$ given by \eqref{initial_condition}.

\section{$L^2$-stability for nonlinear equations}
We will show that in the semidiscrete case (no time discretization) both the DDG and LDG methods are $L^2$-stable, for linear and nonlinear equations.

In this section and the subsequent one, we assume the existence of solutions $\hat{u}$ and $\tilde{\mathbf{w}}=(\tilde{u},\tilde{q})'$ of the DDG and LDG
methods \eqref{Ds2} and \eqref{systdisc}, respectively, satisfying $\hat{u},\tilde{u},\tilde{q}\in C^1([0,T];V^k\cap L^{2}(\mathbb{R}))$, in which case the
integrals containing the nonlocal operator $\Levy[\cdot]$ are all well defined. Indeed, by Lemma \ref{Hs}, $V^k\cap L^{2}(\mathbb{R})\subseteq
H^{\lambda/2}(\mathbb{R})$, and hence all integrals of the form
$$
\int_{\mathbb{R}}\varphi_1\,\Levy[\varphi_2]\quad\text{for$ \quad\varphi_1,\varphi_2\in V^k\cap L^{2}(\mathbb{R})$,}
$$
can be interpreted as the pairing between $\varphi_1\in H^{\lambda/2}(\mathbb{R})$ and $\Levy[\varphi_2]\in H^{-\lambda/2}(\mathbb{R})$. Here
$H^{-\lambda/2}(\mathbb{R})$ is the dual space of $H^{\lambda/2}(\mathbb{R})$, and $\Levy[\varphi]\in H^{-\lambda/2}(\mathbb{R})$ whenever $\varphi\in
H^{\lambda/2}(\mathbb{R})$ (cf.~Corollary A.3 and proof in \cite{Cifani/ERJ/KHK}).

\begin{remark}
The existence and uniqueness of solutions in $C^1([0,T];V^k\cap L^{2}(\mathbb{R}))$ can be proved using the Picard-Cauchy-Lipschitz theorem. The argument
outlined in \cite[Section 3]{Cifani/ERJ/KHK}, can be adapted to the current setting since all numerical fluxes are (locally) Lipschitz (cf. \cite{Cockburn/Shu}
for the LDG case). For the DDG method with $k>2$, additional differentiability on $a$ is needed for this proof to work.
\end{remark}

\subsection{DDG method}
Let us sum over all $i\in\Z$ in \eqref{Ds2}, integrate over $t\in(0,T)$, and introduce the functional
\begin{align}\label{functionalM}
\begin{split}
M_{DDG}[u,v]=\int_0^T&\int_{\mathbb{R}}u_{t}v
-\int_0^T\sum_{i\in\mathbb{Z}}\Big[\hat{f}(u_{i})[v_{i}]+\int_{I_{i}}f(u)v_{x}\Big]\\
&+\int_0^T\sum_{i\in\mathbb{Z}}\Big[\hat{h}(u_i)[v_i] +\int_{I_i}a(u)u_xv_x\Big]-b\int_0^T\int_{\mathbb{R}}\Levy[u]v.
\end{split}
\end{align}
Let us define
\begin{align*}
\Gamma_T[u]=(1-\gamma)\int_0^T\sum_{i\in\Z}\int_{I_i}a(u)(u_x)^2 +\alpha\int_0^T \sum_{i\in\Z}\frac{[A(u_i)]}{\Delta x}[u_i],
\end{align*}
where $\gamma\in(0,1)$ and $\alpha>0$. Note that $\Gamma_T\geq0$ since $a\geq0$ and, using the Taylor's formula, $[A(u_i)][u_i]=a(\xi_i)[u_i]^2\geq0$ where and
$\xi_i\in[u(x_i^-),u(x_i^+)]$, $i\in\Z$.
\begin{theorem}\label{stab1}
\emph{(Stability)} Let $\hat{u}$ be a solution of \eqref{Ds2} such that both $\hat u,A(\hat u)$ and their first $k$ derivatives are sufficiently integrable.
Then
\begin{align*}
\|\hat{u}(\cdot,T)\|_{L^{2}(\mathbb{R})}^2+2\Gamma_{T}[\hat{u}] +bc_\lambda\int_0^T|\hat{u}(\cdot,t)|^2_{H^{\lambda/2}(\mathbb{R})}\dt
\leq\|u_{0}\|_{L^{2}(\mathbb{R})}^2.
\end{align*}
\end{theorem}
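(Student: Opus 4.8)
The plan is to run the standard energy method. Since $\hat u$ solves \eqref{Ds2}, summing over $i\in\Z$ and integrating in time gives $M_{DDG}[\hat u,v]=0$ for every admissible test function, so I would simply take $v=\hat u$ and show that the resulting identity $M_{DDG}[\hat u,\hat u]=0$ rearranges into the claimed inequality. The four groups of terms in \eqref{functionalM} are handled separately: the temporal term produces the $L^2$ norms, the convection term is controlled by the E-flux property \eqref{eeeflux}, the local diffusion term is controlled by the admissibility condition \eqref{admiss}, and the nonlocal term produces the $H^{\lambda/2}$ seminorm. Each has the correct sign, so the estimate drops out after collecting them.

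For the temporal term, $\int_0^T\int_\R \hat u_t\hat u=\tfrac12\big(\|\hat u(\cdot,T)\|_{L^2(\R)}^2-\|\hat u(\cdot,0)\|_{L^2(\R)}^2\big)$ by the fundamental theorem of calculus; since \eqref{initial_condition} makes $\hat u(\cdot,0)$ the $L^2$-projection of $u_0$ onto $V^k$, orthogonality of the projection gives $\|\hat u(\cdot,0)\|_{L^2(\R)}\le\|u_0\|_{L^2(\R)}$, which is precisely where the inequality (rather than equality) enters. For the convection term, I would integrate by parts on each $I_i$ using $F=\int^u f$ to write $\int_{I_i}f(\hat u)\hat u_x=F(\hat u_{i+1}^-)-F(\hat u_i^+)$; regrouping the sum by interfaces gives $\sum_i\int_{I_i}f(\hat u)\hat u_x=-\sum_i[F(\hat u_i)]$, so the entire convection contribution collapses to $\int_0^T\sum_i\int_{\hat u_i^-}^{\hat u_i^+}\big[f(x)-\hat f(\hat u_i^-,\hat u_i^+)\big]\dx$, which is nonnegative by \eqref{eeeflux}.

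The local diffusion term is immediate: with $v=\hat u$ it reads $\int_0^T\sum_i\big[\hat h(\hat u_i)[\hat u_i]+\int_{I_i}a(\hat u)(\hat u_x)^2\big]$, and the admissibility condition \eqref{admiss} bounds this below by $\Gamma_T[\hat u]$. For the nonlocal term I would symmetrize: writing $\int_\R\Levy[\hat u]\hat u$ as a double integral, substituting $y=x+z$, and averaging with the copy obtained by swapping $x$ and $y$, the numerator becomes $-(\hat u(y)-\hat u(x))^2$, whence $\int_\R\Levy[\hat u]\hat u=-\tfrac{c_\lambda}{2}|\hat u(\cdot,t)|_{H^{\lambda/2}(\R)}^2$. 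Consequently $-b\int_0^T\int_\R\Levy[\hat u]\hat u=+\tfrac{bc_\lambda}{2}\int_0^T|\hat u(\cdot,t)|_{H^{\lambda/2}(\R)}^2\dt$. Collecting all four contributions in $M_{DDG}[\hat u,\hat u]=0$ and multiplying by two yields exactly the stated estimate.

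The individual steps are short, so the real work lies in justifying the manipulations under the hypothesis that $\hat u,A(\hat u)$ and their first $k$ derivatives are sufficiently integrable. I expect the main obstacle to be the interchange of the sum over $i\in\Z$ with the time integral and the interface regrouping in the convection term, which requires absolute convergence of the series (hence enough decay of $\hat u$ at infinity for the rearrangement to be legitimate), together with the use of Fubini's theorem in the symmetrization of the nonlocal term. The latter is licit because $V^k\cap L^2(\R)\subseteq H^{\lambda/2}(\R)$ by Lemma \ref{Hs}, so the double integral defining $|\hat u(\cdot,t)|_{H^{\lambda/2}(\R)}^2$ is finite for a.e.\ $t$. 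Once these convergence issues are secured, every term carries the required sign and the estimate follows directly.
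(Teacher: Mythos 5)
Your proposal is correct and follows essentially the same route as the paper: set $v=\hat u$ in $M_{DDG}[\hat u,v]=0$, control the convection term via the E-flux property \eqref{eeeflux}, the local diffusion term via the admissibility condition \eqref{admiss}, and the nonlocal term via the symmetrization identity (this is exactly \eqref{kkk3} of Lemma \ref{lemmanonloc}, which the paper simply cites rather than rederives). You supply details the paper leaves implicit -- the interface regrouping behind the ``change of variables,'' the treatment of $\int_0^T\int_\R\hat u_t\hat u$, and the observation that \eqref{initial_condition} makes $\hat u(\cdot,0)$ an $L^2$-projection so that $\|\hat u(\cdot,0)\|_{L^2(\R)}\le\|u_0\|_{L^2(\R)}$ -- but the argument is the paper's.
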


\begin{remark}
Since $\tilde u\in C^1([0,T];V^k\cap L^{2}(\mathbb{R}))$ and $f(0)=0$, all terms in \eqref{main} below are well defined -- except for
\begin{align*}
\int_0^T\Big[\sum_{i\in\Z}\hat{h}(\hat u_i)[\hat{u}_i] +\int_{I_i}a(\hat{u})(\hat{u}_x)^2\Big].
\end{align*}
When $k\geq2$, additional integrability of $\hat u,A(\hat u)$, and their first $k$ derivatives, is required in order to give meaning to the $\hat h$-term.
\end{remark}

\begin{proof}
By construction, $M_{DDG}[\hat{u},v]=0$ for all $v\in V^k\cap L^{2}(\R)$. If we set $v=\hat{u}$, we obtain
\begin{align}\label{main}
\begin{split}
\int_0^{T}\int_{\mathbb{R}}\hat{u}_{t}\hat{u}&-\int_0^{T}\sum_{i\in\mathbb{Z}}\Big[\hat{f}(\hat{u}_{i})
[\hat{u}_{i}]+\int_{I_{i}}f(\hat{u})\hat{u}_{x}\Big]\\
&+\int_0^T\sum_{i\in\Z}\Big[\hat{h}(\hat u_i)[\hat{u}_i] +\int_{I_i}a(\hat{u})(\hat{u}_x)^2\Big]-b\int_0^{T}\int_{\mathbb{R}}\Levy[\hat{u}]\hat{u}=0.
\end{split}
\end{align}
Next, as a direct consequence of \eqref{eeeflux} and a change of variables, we see that
\begin{align}\label{e1}
\int_0^{T}\sum_{i\in\mathbb{Z}}\Big[\hat{f}(\hat{u}_{i})[\hat{u}_{i}] +\int_{I_{i}}f(\hat{u})\hat{u}_{x}\Big]\leq0.
\end{align}
Since $\hat{h}$ satisfies the expression \eqref{admiss},
\begin{align}\label{e2}
\int_0^{T}\sum_{i\in\Z}\hat{h}_i(\hat u_i)[\hat{u}_i]\geq \alpha\int_0^T\sum_{i\in\Z}\frac{[A(\hat{u}_i)]}{\Delta x}[\hat{u}_i]
-\gamma\int_0^T\sum_{i\in\Z}\int_{I_i}a(\hat{u})(\hat{u}_x)^2.
\end{align}
Finally, using Lemma \ref{lemmanonloc},
\begin{align}\label{e3}
\int_{\mathbb{R}}\Levy[\hat{u}]\hat{u}= -\frac{c_\lambda}{2}|\hat{u}|_{H^{\lambda/2}(\mathbb{R})}^2.
\end{align}
We conclude by inserting \eqref{e1}, \eqref{e2}, and \eqref{e3} into \eqref{main}.
\end{proof}

\subsection{LDG method}
By summing over all $i\in\Z$, we can rewrite \eqref{systdisc} as
\begin{align*}
&\int_{\R}\partial_{t}uv_u-\sum_{i\in\Z}\bigg(\hat{h}_u(\mathbf{w}_{i})[v_{u,i}]
+\int_{I_{i}}h_u(\mathbf{w})\partial_{x}v_u\bigg)=b\int_{\R}\Levy[u]v_u,\\
&\int_{\R}q v_q-\sum_{i\in\Z}\bigg(\hat{h}_q(u_{i})[v_{q,i}] +\int_{I_{i}}h_{q}(u)\partial_xv_q\bigg)=0.
\end{align*}
We add the two equations and integrate over $t\in(0,T)$ to find $M_{LDG}[\mathbf{w},\mathbf{v}]=0$ for
\begin{align}\label{functionalB}
\begin{split}
M_{LDG}[\mathbf{w},\mathbf{v}]=&\int_{0}^{T}\int_{\mathbb{R}}u_tv_u+\int_0^T\int_{\R}qv_q\\
&-\int_0^T\sum_{i\in\Z}\bigg(\hat{\mathbf{h}}(\mathbf{w}_i)'[\mathbf{v}_i]
+\int_{I_i}\mathbf{h}(\mathbf{w})'\partial_x\mathbf{v}\bigg)-b\int_0^T\int_{\R}\Levy[u]v_u,
\end{split}
\end{align}
where $\hat{\mathbf{h}}(\mathbf{w}_i)= (\hat{h}_u(\mathbf{w}_{i}),\hat{h}_q(u_{i}))'$, $\mathbf{v}=(v_u,v_q)'$ and $\mathbf{v}_i=(v_{u,i},v_{q,i})'$. Moreover,
let (remember that, as noted earlier, the matrix $\C$ is semipositive definite)
\begin{align*}
\Theta_{T}[\mathbf{w}]=\int_0^T\sum_{i\in\Z}[\mathbf{w}_i]'\C[\mathbf{w}_i]\ (\geq0).
\end{align*}

\begin{theorem}
\emph{(Stability)} If $\tilde{\mathbf{w}}=(\tilde{u},\tilde{q})'$ is a $C^1([0,T];(V^k\cap L^2)^2 )$ solution of \eqref{systdisc}, then
\begin{align*}
\|\tilde{u}(\cdot,T)\|_{L^{2}(\mathbb{R})}^2+2\|\tilde{q}\|_{L^2(Q_T)}^2 +2\Theta_{T}(\tilde{\mathbf{w}})
+bc_\lambda\int_0^T|\tilde{u}(\cdot,t)|^2_{H^{\lambda/2}(\mathbb{R})}\dt \leq\|u_{0}\|_{L^{2}(\mathbb{R})}^2.
\end{align*}
\end{theorem}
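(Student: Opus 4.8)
The plan is to mimic the energy argument behind Theorem~\ref{stab1}: the method is built so that $M_{LDG}[\tilde{\mathbf w},\mathbf v]=0$ for every admissible $\mathbf v$, so I would test this identity against the solution itself, $\mathbf v=\tilde{\mathbf w}$, and evaluate the four groups of terms in \eqref{functionalB}. Writing $u=\tilde u$ and $q=\tilde q$: the time term gives $\int_0^T\int_\R u_tu=\tfrac12\|u(\cdot,T)\|_{L^2(\R)}^2-\tfrac12\|u(\cdot,0)\|_{L^2(\R)}^2$, and since \eqref{initial_condition} makes $u(\cdot,0)$ the $L^2$-projection of $u_0$ onto $V^k$ we have $\|u(\cdot,0)\|_{L^2(\R)}\le\|u_0\|_{L^2(\R)}$ (this projection is the sole source of the inequality); the second term gives $\int_0^T\int_\R q^2=\|\tilde q\|_{L^2(Q_T)}^2$; and, exactly as in \eqref{e3} via Lemma~\ref{lemmanonloc}, the nonlocal term gives $-b\int_0^T\int_\R\Levy[u]u=\tfrac{bc_\lambda}{2}\int_0^T|u(\cdot,t)|_{H^{\lambda/2}(\R)}^2\dt$. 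These already reproduce every term of the claimed estimate except the flux/interior contribution, which must therefore equal $\Theta_T[\tilde{\mathbf w}]$.

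The heart of the matter is thus the remaining term $-\int_0^T\sum_{i\in\Z}\big(\hat{\mathbf h}(\mathbf w_i)'[\mathbf w_i]+\int_{I_i}\mathbf h(\mathbf w)'\partial_x\mathbf w\big)$. First I would treat the interior integrals cell by cell: since $\mathbf w$ is polynomial on each $I_i$ and $g(u)_x=\sqrt{a(u)}\,u_x$, the convective piece is a perfect derivative, $\int_{I_i}f(u)u_x=\int_{I_i}F(u)_x$, and the two coupling pieces combine into one, $-\int_{I_i}\sqrt{a(u)}\,q\,u_x-\int_{I_i}g(u)q_x=-\int_{I_i}\big(q\,g(u)\big)_x$. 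Summing over $i$ and relabeling the cell-boundary values as nodal quantities (the contributions at $\pm\infty$ vanishing because $\tilde u,\tilde q\in L^2$), the interior integrals collapse to $-\sum_i[F(u_i)]+\sum_i\big(q^+g^+-q^-g^-\big)$, with $q^\pm=q(x_i^\pm)$ and $g^\pm=g(u(x_i^\pm))$, which I would add to the nodal flux terms $\sum_i\big(\hat h_u(\mathbf w_i)[u_i]+\hat h_q(u_i)[q_i]\big)$.

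The \emph{key} step is then to insert the flux formulas \eqref{Lflux} and watch three cancellations occur. Expanding $\hat h_u(\mathbf w_i)[u_i]$ produces a $+[F(u_i)]$ that cancels the $-[F(u_i)]$ above; the antisymmetric off-diagonal entries of $\C$ make the cross-terms $-c_{12}[q_i][u_i]$ and $+c_{12}[u_i][q_i]$ cancel; and the product-rule identity $[g(u_i)]\,\overline{q_i}+\overline{g(u_i)}\,[q_i]=q^+g^+-q^-g^-$ annihilates the entire $g$-coupling. What survives at each node is exactly $-c_{11}[u_i]^2=-[\mathbf w_i]'\C[\mathbf w_i]$. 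Carrying the outer minus sign of \eqref{functionalB}, the flux/interior term therefore equals $+\int_0^T\sum_{i\in\Z}[\mathbf w_i]'\C[\mathbf w_i]=\Theta_T[\tilde{\mathbf w}]$, and $M_{LDG}[\tilde{\mathbf w},\tilde{\mathbf w}]=0$ turns into an energy identity; combining it with the projection bound $\|u(\cdot,0)\|\le\|u_0\|$ and doubling yields the stated estimate.

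I expect the main obstacle to be organizational rather than analytical: getting the nodal bookkeeping right so that the cell-boundary differences telescope correctly, and verifying the three cancellations above—especially the identity $[g]\,\overline q+\overline g\,[q]=[qg]$ and the role of the antisymmetric structure of $\C$—which is precisely the property the Cockburn--Shu fluxes \eqref{Lflux} were designed to produce. A secondary point worth recording is that the E-flux inequality \eqref{eeeflux} enters here only implicitly: it guarantees $c_{11}\ge0$ (equivalently $c_{11}[u_i]^2=[F(u_i)]-\hat f(u_i^-,u_i^+)[u_i]\ge0$), hence $\Theta_T\ge0$ and the semi-definiteness of $\C$, so that—unlike the DDG case—no separate convective estimate analogous to \eqref{e1} is needed.
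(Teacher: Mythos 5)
Your proposal is correct and follows essentially the same route as the paper's proof: test $M_{LDG}[\tilde{\mathbf w},\mathbf v]=0$ with $\mathbf v=\tilde{\mathbf w}$, reduce the flux/interior sum to $\Theta_T[\tilde{\mathbf w}]$ (the paper's identity \eqref{e4}), and handle the nonlocal term via Lemma \ref{lemmanonloc}. The only difference is one of completeness rather than of method: the paper cites \cite{Cockburn/Shu} for \eqref{e4}, whereas you prove it in-line (your telescoping plus the three cancellations --- the $[F(u_i)]$ cancellation, the antisymmetry of the $c_{12}$-terms, and the identity $[g]\,\overline q+\overline g\,[q]=[qg]$ --- are exactly the content of that citation, and your computation is correct), and you also make explicit that the $L^2$-projection of $u_0$ in \eqref{initial_condition} is the sole source of the inequality, a point the paper leaves implicit.
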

Here, as opposed to Theorem \ref{stab1}, no further integrability of the first $k$ derivatives of the numerical solution
$\tilde{\mathbf{w}}=(\tilde{u},\tilde{q})'$ is needed. The reason is that the numerical flux $\hat{\mathbf{h}}$ has been built without the use of derivatives
of $\tilde{\mathbf{w}}=(\tilde{u},\tilde{q})'$. Each term in expression \eqref{main2} below is well defined thanks to \eqref{e4}, the fact that $f(0)=0$ (which
implies that $c_{11}(0)=0$), $c_{12}(0)=0$, and $\tilde{u},\tilde{q}\in C^1([0,T];V^k\cap L^{2}(\mathbb{R}))$.

\begin{proof}
By construction, $M_{LDG}(\hat{\mathbf{w}},\mathbf{v})=0$ for all $\mathbf{v}=(v_{u},v_{q})'$, $v_u,v_q\in V^{k}\cap L^{2}(\R)$. We set
$\mathbf{v}=\hat{\mathbf{w}}$ and find that
\begin{align}\label{main2}
\begin{split}
\int_0^T\int_{\R}\tilde{u}_t\tilde{u}+\int_0^T\int_{\R}\tilde{q}^2 -\int_0^T\sum_{i\in\Z}\bigg( \hat{\mathbf{h}}(\tilde{\mathbf{w}}_i)'[\tilde{\mathbf{w}}_i]
+\int_{I_{i}}\mathbf{h}(\tilde{\mathbf{w}})'
\partial_x\tilde{\mathbf{w}}\bigg)&\\-b\int_0^T
\int_{\R}\Levy[\tilde{u}]\tilde{u}&=0.
\end{split}
\end{align}
Here we also used the fact that
\begin{align}\label{e4}
\begin{split}
-\int_0^T\sum_{i\in\Z}\bigg(\hat{\mathbf{h}}(\tilde{\mathbf{w}}_i)'[\tilde{\mathbf{w}}_i]
+\int_{I_{i}}\mathbf{h}(\tilde{\mathbf{w}})'\partial_x\tilde{\mathbf{w}}\bigg) =\int_0^T\sum_{i\in\Z}[\tilde{\mathbf{w}}_i] \C[\tilde{\mathbf{w}}_i],
\end{split}
\end{align}
see \cite{Cockburn/Shu} for a proof. To conclude, insert \eqref{e4} and \eqref{e3} into \eqref{main2}.
\end{proof}

\section{High-order convergence for linear equations}
In this section we consider the linear problem
\begin{align}\label{linear}
\begin{cases}
u_{t}+cu_{x}=u_{xx}+b\Levy[u]&(x,t)\in Q_T,\\
u(x,0)=u_0(x)&x\in\R,
\end{cases}
\end{align}
with the aim of proving that the DDG and LDG methods converge to a regular solution of \eqref{linear} with high-order accuracy.

\begin{lemma}\label{smoothsol}
Let $u_{0}\in H^{k+1}(\mathbb{R})$, with $k\geq0$. There exists a
unique function $u\in H^{k+1}_{\mathrm{par}}(Q_T)$ solving \eqref{linear}, where
$$ H^{k+1}_{\mathrm{par}}(Q_T):=\Big\{\phi\in L^2(Q_T):
\|\del_t^m\del_x^{r}u\|_{L^2(Q_T)}<\infty \text{ for all }
0\leq r+2m\leq k+1\Big\}.$$
Moreover, $\|u(\cdot,t)\|_{H^{k+1}(\mathbb{R})}
\leq\|u_{0}\|_{H^{k+1}(\mathbb{R})}.$
\end{lemma}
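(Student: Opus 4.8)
The plan is to diagonalize \eqref{linear} with the spatial Fourier transform, which turns this constant-coefficient equation into an uncoupled family of scalar linear ODEs indexed by the frequency $\xi$. Writing $\hat u(\xi,t)$ for the transform of $u(\cdot,t)$ and combining $\widehat{u_{xx}}(\xi)=-\xi^2\hat u(\xi)$ with \eqref{fourier}, the equation reads
\begin{align*}
\partial_t\hat u(\xi,t)=-P(\xi)\,\hat u(\xi,t),\qquad \hat u(\xi,0)=\hat u_0(\xi),\qquad P(\xi):=ci\xi+\xi^2+b|\xi|^\lambda,
\end{align*}
with explicit solution $\hat u(\xi,t)=e^{-P(\xi)t}\hat u_0(\xi)$. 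Inverting the transform defines the candidate $u$, which solves \eqref{linear} in the distributional sense (equivalently, via the Fourier multiplier $e^{-P(\xi)t}$, and pointwise a.e.\ once $k\geq1$); uniqueness is then immediate, since the transform of the difference of two solutions with the same data satisfies the homogeneous ODE and hence vanishes identically.

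The next step exploits the sign structure of $P$. Because $b\geq0$ and $\lambda\in(0,1)$, the real part $R(\xi):=\mathrm{Re}\,P(\xi)=\xi^2+b|\xi|^\lambda$ is nonnegative, the convection contributing only the imaginary part $ci\xi$. Thus $|e^{-P(\xi)t}|=e^{-R(\xi)t}\leq1$ and $|\hat u(\xi,t)|\leq|\hat u_0(\xi)|$ for every $\xi$ and $t\geq0$. Multiplying by $(1+\xi^2)^{k+1}$, integrating in $\xi$, and invoking Plancherel gives
\begin{align*}
\|u(\cdot,t)\|_{H^{k+1}(\R)}^2=\int_\R(1+\xi^2)^{k+1}|\hat u(\xi,t)|^2\dxi\leq\int_\R(1+\xi^2)^{k+1}|\hat u_0(\xi)|^2\dxi=\|u_0\|_{H^{k+1}(\R)}^2,
\end{align*}
which is the asserted estimate.

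For the parabolic regularity I would differentiate the explicit formula, obtaining $\widehat{\partial_t^m\partial_x^r u}(\xi,t)=(i\xi)^r(-P(\xi))^m e^{-P(\xi)t}\hat u_0(\xi)$. The crucial observation is that $P$ has parabolic order two, i.e.\ $|P(\xi)|\leq C(1+\xi^2)$: the Laplacian term contributes $\xi^2$, the convection term $|c\xi|$ is lower order, and the fractional term $b|\xi|^\lambda$ is subordinate precisely because $\lambda<2$ (dominated by $\xi^2$ at high frequency, bounded near $\xi=0$). Hence, using $e^{-2R(\xi)t}\leq1$, Plancherel, and $r+2m\leq k+1$,
\begin{align*}
\|\partial_t^m\partial_x^r u\|_{L^2(Q_T)}^2
&\leq T\int_\R|\xi|^{2r}|P(\xi)|^{2m}|\hat u_0|^2\dxi\\
&\leq C^{2m}T\int_\R(1+\xi^2)^{r+2m}|\hat u_0|^2\dxi
\leq C^{2m}T\|u_0\|_{H^{k+1}(\R)}^2<\infty,
\end{align*}
which places $u$ in $H^{k+1}_{\mathrm{par}}(Q_T)$.

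The step requiring the most care is the matching between the symbol of the operator and the parabolic counting in $H^{k+1}_{\mathrm{par}}$: one $\partial_t$ corresponds to the multiplier $P(\xi)\sim\xi^2$ and must therefore be weighed as two $\partial_x$, which is exactly the convention $0\leq r+2m\leq k+1$. The fractional diffusion $b\Levy$ could a priori disrupt this scaling, but the restriction $\lambda\in(0,1)$ keeps it a lower-order perturbation of $\partial_x^2$, so the bound $|P(\xi)|\lesssim1+\xi^2$ survives and no extra regularity of $u_0$ is consumed. The only remaining routine matter is to justify differentiating under the integral in the Fourier representation, which is legitimate once the $L^2$ bounds above are in hand.
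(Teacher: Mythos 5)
Your proof is correct and takes essentially the same approach as the paper: Fourier transformation of \eqref{linear} to obtain the explicit solution $\hat u(\xi,t)=e^{-P(\xi)t}\hat u_0(\xi)$, the bound $|e^{-P(\xi)t}|\leq 1$ from $\mathrm{Re}\,P\geq 0$ together with Plancherel for the $H^{k+1}$ estimate and uniqueness, and the parabolic-order bound $|P(\xi)|\lesssim 1+\xi^2$ for the mixed space-time regularity. Your symbol computation of $\partial_t^m\partial_x^r u$ is simply a more explicit, unified rendering of the paper's iteration argument and its identity $\del_t^m u=(-c\del_x+\del_x^2+b\Levy)^m u$.
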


\begin{proof}
Since the equation is linear, we can pass to the Fourier space. In view of \eqref{fourier}, the Fourier transform of \eqref{linear} is $\hat{u}_{t}+i\xi
c\hat{u}= -\xi^2\hat{u}-b|\xi|^{\lambda}\hat{u}$. It follows that
\begin{align*}
\hat{u}(\xi,t)=\hat{u}_{0}(\xi)e^{-(i\xi c+\xi^2+b|\xi|^{\lambda})t}.
\end{align*}
By the properties of the Fourier transform, the above expression implies the existence of a unique $L^{2}$-stable weak solution of \eqref{linear}. The
$L^{2}$-stability for higher derivatives can be obtained by iteration as follows: take the derivative of \eqref{linear}, use the Fourier transform to get
stability, and iterate up to the $k$th derivative. Regularity in time
follows from the regularity in space since equation \eqref{linear}
implies that $\del_t^ku=(-c\del_x+\del_x^2+b\Levy)^ku$.
\end{proof}

In the following two theorems we obtain $L^2$-type error estimates for
the DDG and LDG methods in the case that equation \eqref{linear} has
$H^{k+1}_{\mathrm{par}}$-regular solutions. (Note that the time regularity does
not play any role here). To do so, we combine estimates for the local terms derived in \cite{Cockburn/Shu,Hailiang/Jue} with estimates for the nonlocal term derived by the
authors in \cite{Cifani/ERJ/KHK}. In \cite{Cockburn} it was observed that most relevant numerical $\hat f$ fluxes reduce to
\begin{align*}
\hat{f}(u_i^-,u_i^+)&=c\overline{u_i}-|c|\frac{[u_i]}{2}
\end{align*}
in the linear case. In this section we only consider this $\hat f$ flux.

\subsection{DDG method}

\begin{theorem}\label{DDGlin}
\emph{(Convergence)} Let $u\in H^{k+1}_{\mathrm{par}}(Q_T)$, $k\geq0$, be a solution of \eqref{linear} and $\hat{u}\in C^1([0,T];V^k\cap L^2(\R))$ be a solution of
\eqref{Ds2}. With $e=u-\hat{u}$,
\begin{align*}
\begin{split}
\int_\R e^2(x,T) & +\frac{|c|}{2}\int_0^T \sum_{i\in\Z}[e_i]^2+(1-\gamma) \int_0^T\int_{\R}(e_x)^2 +\alpha\int_0^{T}
\sum_{i\in\Z}\frac{[e_i]^2}{\Delta x}\\
& +bc_\lambda\int_0^T|e|^2_{H^{\lambda/2}(\R)}
 =\mathcal{O}(1)\Delta x^{2k}.
\end{split}
\end{align*}
\end{theorem}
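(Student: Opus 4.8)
The plan is to run the classical discontinuous Galerkin energy argument based on Galerkin orthogonality, feeding into it the coercivity already extracted in the proof of Theorem~\ref{stab1} on one side, and the projection/approximation estimates of \cite{Cockburn/Shu,Hailiang/Jue} together with the nonlocal estimate of \cite{Cifani/ERJ/KHK} on the other. First I would observe that in the linear case ($f(u)=cu$, $a\equiv1$, hence $A(u)=u$) the numerical fluxes $\hat f$ and $\hat h$ are linear, so the functional $M_{DDG}[\cdot,\cdot]$ of \eqref{functionalM} is bilinear. Since $\hat u$ solves \eqref{Ds2} we have $M_{DDG}[\hat u,v]=0$ for every $v\in V^k\cap L^2(\R)$. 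Because $u(\cdot,t)\in H^{k+1}(\R)\hookrightarrow C^{k}(\R)$ (Lemma~\ref{smoothsol} and the one-dimensional Sobolev embedding), the exact solution and its first $k$ spatial derivatives are continuous, so every interface jump entering $\hat f$ and $\hat h$ vanishes (recall $\hat h$ only uses jumps of $\partial_x^{2m}u$ with $2m\le k$) and the numerical fluxes reduce to the exact ones; hence $u$ also satisfies $M_{DDG}[u,v]=0$. Subtracting gives the Galerkin orthogonality
\[
M_{DDG}[e,v]=0\qquad\text{for all }v\in V^k\cap L^2(\R).
\]

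Next I would fix the Gauss--Radau/$L^2$-type projection $\mathcal{P}:H^{k+1}\to V^k$ used in \cite{Cockburn/Shu,Hailiang/Jue} and split $e=\eta+\xi$ with $\eta=u-\mathcal{P}u$ and $\xi=\mathcal{P}u-\hat u\in V^k$. Choosing $v=\xi$ gives $M_{DDG}[\xi,\xi]=-M_{DDG}[\eta,\xi]$. For the left-hand side I would reproduce verbatim the computation behind \eqref{e1}, \eqref{e2}, and \eqref{e3}: the upwind flux turns \eqref{e1} into the equality contributing $\tfrac{|c|}{2}\sum_i[\xi_i]^2$, the admissibility \eqref{admiss} with $A(\xi)=\xi$ contributes $\alpha\sum_i[\xi_i]^2/\Delta x+(1-\gamma)\int_\R(\xi_x)^2$, and \eqref{e3} contributes $\tfrac{bc_\lambda}{2}|\xi|_{H^{\lambda/2}(\R)}^2$. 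Together with $\int_0^T\!\int_\R\xi_t\xi=\tfrac12\|\xi(\cdot,T)\|_{L^2}^2-\tfrac12\|\xi(\cdot,0)\|_{L^2}^2$ this shows that $M_{DDG}[\xi,\xi]$ bounds from below exactly the quantity in the statement, written for $\xi$ in place of $e$, up to the term $-\tfrac12\|\xi(\cdot,0)\|_{L^2}^2$.

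It then remains to bound $-M_{DDG}[\eta,\xi]$ and $\|\xi(\cdot,0)\|_{L^2}^2$. The initial term is $\mathcal{O}(\Delta x^{2k+2})$, since \eqref{initial_condition} makes $\hat u(\cdot,0)$ the $L^2$-projection of $u_0$. In $M_{DDG}[\eta,\xi]$ the projection $\mathcal{P}$ is chosen so that the dangerous terms (products of $\eta,\eta_x$ against test-function gradients and the interface-flux contributions) either vanish by the orthogonality/trace-matching properties of $\mathcal{P}$ or are of the order produced by the local estimates of \cite{Cockburn/Shu,Hailiang/Jue}, namely $\mathcal{O}(\Delta x^{2k})$ once Young's inequality absorbs the $\xi$-factors into the coercive terms. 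The one genuinely new contribution is the nonlocal cross term, which by the bilinear form of $\Levy$ equals, up to sign, $\tfrac{bc_\lambda}{2}\int_0^T\!\int_\R\!\int_\R\frac{(\eta(z)-\eta(x))(\xi(z)-\xi(x))}{|z-x|^{1+\lambda}}$; I would bound it by $\tfrac{bc_\lambda}{2}|\eta|_{H^{\lambda/2}}|\xi|_{H^{\lambda/2}}$, absorb $|\xi|_{H^{\lambda/2}}^2$ into the coercive term, and control the leftover $|\eta|_{H^{\lambda/2}}^2$ via the estimates of \cite{Cifani/ERJ/KHK}; interpolating $\|\eta\|_{L^2}=\mathcal{O}(\Delta x^{k+1})$ and $\|\eta_x\|_{L^2}=\mathcal{O}(\Delta x^{k})$ yields $|\eta|_{H^{\lambda/2}}^2=\mathcal{O}(\Delta x^{2k+2-\lambda})$, of higher order. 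A Gr\"onwall argument on the residual $\int_0^T\|\xi(\cdot,t)\|_{L^2}^2$ closes the estimate for the $\xi$-quantities, and the triangle inequality $e=\eta+\xi$ transfers it to $e$, all $\eta$-contributions being $\mathcal{O}(\Delta x^{2k})$ or smaller by approximation theory.

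The hard part will be the bookkeeping in $M_{DDG}[\eta,\xi]$: unlike the LDG flux, the DDG flux $\hat h$ contains jumps of high-order derivatives, so the projection must simultaneously annihilate the convective boundary terms, control the $\hat h$-interface terms without losing accuracy, and leave a remainder compatible with the coercivity. Matching these local estimates with the nonlocal cross term so that the overall rate remains $\Delta x^{2k}$ is the crux of the argument.
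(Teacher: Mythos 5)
Your proposal is correct and follows essentially the same route as the paper's proof: Galerkin orthogonality from bilinearity, the energy identity $M_{DDG}[\xi,\xi]=-M_{DDG}[\eta,\xi]$ (the paper writes this equivalently as $M_{DDG}[\mathbb{P}e,\mathbb{P}e]=M_{DDG}[\mathbb{P}e-e,\mathbb{P}e]$ with $\mathbb{P}$ the $L^2$-projection, which coincides with your splitting since $\hat u\in V^k$), coercivity of the diagonal term via the upwind flux, the admissibility condition \eqref{admiss} and Lemma \ref{lemmanonloc}, the cited local estimates of \cite{Cockburn,Cockburn/Shu,Hailiang/Jue}, the nonlocal projection bound $\mathcal{O}(1)\Delta x^{2k+2-\lambda}$ from \cite{Cifani/ERJ/KHK}, and finally transferring from the projected error back to $e$. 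The only cosmetic differences are that the paper handles the nonlocal cross term by a polarization identity rather than Cauchy--Schwarz plus Young, and it needs no Gr\"onwall step.
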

\begin{remark}
The error $\mathcal{O}(1)\Delta x^{2k}$ is due to the diffusion term $u_{xx}$. The errors from the convection term $cu_x$ and the fractional diffusion term
$b\Levy[u]$ are of the form $\mathcal{O}(1)\Delta x^{2k+1}$ and $\mathcal{O}(1)\Delta x^{2k+2-\lambda}$ respectively.
\end{remark}

\begin{proof}
Let us set
\begin{align*}
M_{a}[u,v]&=\int_0^T\int_{\mathbb{R}}u_{t}v+\int_0^T\int_{\R}u_xv_x
+\int_0^{T}\sum_{i\in\Z}\hat h(u_i)[v_i],\\
M_{f}[u,v]&=-\int_0^T\sum_{i\in\mathbb{Z}}\Big[\hat{f}(u_{i})[v_{i}]
+\int_{I_{i}}cuv_{x}\Big],\\
M_\Levy[u,v]&=-b\int_0^T\int_{\mathbb{R}}\Levy[u]v.
\end{align*}
With this notation in hand, we can write \eqref{functionalM} as
\begin{align*}
\begin{split}
M_{DDG}[u,v]=M_{a}[u,v]+M_{f}[u,v]+M_\Levy[u,v].
\end{split}
\end{align*}
Let $\p e$ be the $L^{2}$-projection of $e$ into $V^k$, i.e., $\p e$ is the $V^k\cap L^2(\mathbb{R})$ function satisfying
\begin{align*}
\int_{I_{i}}\big(\p e(x)-e(x)\big)\varphi_{ji}(x)\dx=0 \text{ for all $i\in\Z$ and $j=\{0,\ldots,k$\}.}
\end{align*}
Note that $\p e\in H^{\lambda/2}(\mathbb{R})$ since $V^k\cap L^2(\mathbb{R})\subset H^{\lambda/2}(\mathbb{R})$ by Lemma \ref{Hs}. For all $v\in V^k\cap
L^2(\mathbb{R})$, we have $M_{DDG}[\hat{u},v]=0$ since $\hat{u}$ is a DDG solution of \eqref{linear}, while $M_{DDG}[u,v]=0$ since $u$ is a continuous (by
Sobolev imbedding) solution of \eqref{1} and hence a solution of \eqref{linear}. Thus $M_{DDG}[e,v]=0$, and by bilinearity ($\hat h$ is linear
 since $a\equiv1$),
\begin{align}\label{iii}
M_{DDG}[\p e,\p e]=M_{DDG}[\p e-e,\p e].
\end{align}
One can proceed as in \cite{Hailiang/Jue} (in that paper, combine
 the last inequality of the proof of Lemma 3.3 with Lemma 3.2 and
 (3.5)) to obtain
\begin{align}\label{i1}
\begin{split}
M_{a}[\p e-e,\p e]=\frac{1}{2}\int_0^T\int_{\R}(\p e_x)^2 +\frac{1}{2}\int_0^{T}\sum_{i\in\Z}\hat h(\p e_i)[\p e_i]+\mathcal{O}(1)\Delta x^{2k}.
\end{split}
\end{align}
Moreover, proceeding as in \cite[Lemma 2.17]{Cockburn},
\begin{align}\label{i2}
\begin{split}
M_{f}[\p e-e,\p e]=\frac{|c|}{4}\int_0^T \sum_{i\in\Z}[\p e_i]^2+\mathcal{O}(1)\Delta x^{2k+1}.
\end{split}
\end{align}
As shown by the authors in \cite{Cifani/ERJ/KHK},
\begin{align}\label{i3}
\begin{split}
&M_\Levy[\p e-e,\p e]-M_\Levy[\p e,\p e]=b\int_{0}^{T}\int_{\mathbb{R}}\Levy[e]\p e\\
&=\frac{b}{2}\int_{0}^{T}\int_{\mathbb{R}}\Levy[\p e]\p e +\frac{b}{2}\int_{0}^{T}\int_{\mathbb{R}}\Levy[e]e
-\frac{b}{2}\int_{0}^{T}\int_{\mathbb{R}}\Levy[e-\p e](e-\p e)\\
&\leq-\frac{bc_\lambda}{4}\int_{0}^{T}|\p e|_{H^{\lambda/2}(\mathbb{R})}^{2} -\frac{bc_\lambda}{4}\int_{0}^{T}|e|_{H^{\lambda/2}(\mathbb{R})}^{2}
+\frac{bc_\lambda}{4}\int_{0}^{T}\|e-\p e\|_{H^{\lambda/2}(\mathbb{R})}^{2},
\end{split}
\end{align}
where $M_\Levy[\p e,\p e]=\frac{bc_\lambda}{2}\int_{0}^{T}|\p e|_{H^{\lambda/2}(\mathbb{R})}^{2}$ (Lemma \ref{lemmanonloc}) and
\begin{align}\label{aaa}
\|e-\p e\|_{H^{\lambda/2}(\mathbb{R})}^{2}\leq\mathcal{O}(1)\Delta x^{2k+2-\lambda}.
\end{align}
By \eqref{functionalM}, Lemma \ref{lemmanonloc}, and the definition of $\hat f$,
\begin{align*}
\begin{split}
M_{DDG}[\p e,\p e]=&\int_\R (\p e^2)_t+\frac{|c|}{2}\int_0^T\sum_{i\in\Z}[\p e_i]^2 +\int_0^T\int_{\R}(\p e_x)^2\\&+\int_0^{T}\sum_{i\in\Z}\hat h(\p e_i)[\p
e_i]+\frac{bc_\lambda}{2}\int_0^T|\p e|^2_{H^{\lambda/2}(\R)}.
\end{split}
\end{align*}
Inserting this equation along with \eqref{i1}, \eqref{i2}, and \eqref{i3} into \eqref{iii} then shows that
\begin{align*}
\begin{split}
\int_0^T\int_\R (\p e^2)_t+\frac{|c|}{4}\int_0^T\sum_{i\in\Z}[\p e_i]^2 +\frac{1}{2}\int_0^T\int_{\R}(\p e_x)^2
+\frac{1}{2}\int_0^{T}\sum_{i\in\Z}\hat h(\p e_i)[\p e_i]&\\
+\frac{bc_\lambda}{4}\int_0^T|\p e|^2_{H^{\lambda/2}(\R)} +\frac{bc_\lambda}{4}\int_0^T| e|^2_{H^{\lambda/2}(\R)}=\mathcal{O}(1)\Delta x^{2k},&
\end{split}
\end{align*}
and, using the admissibility condition \eqref{admiss},
\begin{align*}
\begin{split}
\int_0^T\int_\R (\p e^2)_t+\frac{|c|}{4}\int_0^T\sum_{i\in\Z}[\p e_i]^2 +\frac{1-\gamma}{2}\int_0^T\int_{\R}(\p e_x)^2
+\frac{\alpha}{2}\int_0^{T}\sum_{i\in\Z}\frac{[\p e_i]^2}{\Delta x}&\\
+\frac{bc_\lambda}{4}\int_0^T|\p e|^2_{H^{\lambda/2}(\R)} +\frac{bc_\lambda}{4}\int_0^T| e|^2_{H^{\lambda/2}(\R)} =\mathcal{O}(1)\Delta x^{2k}.&
\end{split}
\end{align*}
To conclude, we need to pass form $\p e$ to $e$ in the above expression. This has already been done for the diffusion term in Section 3 in \cite{Hailiang/Jue}
and for the convection term in the proof of Lemma 2.4 in \cite{Cockburn/Shu}. For the nonlocal term, we see that by \eqref{aaa}
\begin{align*}
\begin{split}
|\p e|^2_{H^{\lambda/2}(\R)}=|e|^2_{H^{\lambda/2}(\R)} -\mathcal{O}(1)\Delta x^{2k+2-\lambda},
\end{split}
\end{align*}
and the conclusion follows.
\end{proof}

\subsection{LDG method}

\begin{theorem}\label{LDGlin}
\emph{(Convergence)} Let $u\in H^{k+1}_{\mathrm{par}}(Q_T)$, $k\geq0$, be a solution of \eqref{linear} and $\tilde{\mathbf{w}}=(\tilde{u},\tilde{q})'\in C^1([0,T];V^k\cap L^2
)$ be a solution of \eqref{Ds2}. With $e_u=u-\tilde{u}$ and $e_q=q-\tilde{q}$,
\begin{align*}
\begin{split}
\int_\R e_u^2(x,T)+\int_0^T\int_{\R}e_q^2 +\Theta_T[\mathbf{e}] +bc_\lambda\int_0^T|e_u|^2_{H^{\lambda/2}(\R)} =\mathcal{O}(1)\Delta x^{2k}.
\end{split}
\end{align*}
\end{theorem}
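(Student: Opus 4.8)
The plan is to follow the proof of Theorem~\ref{DDGlin} line by line, exploiting that for the linear problem \eqref{linear} we have $f(u)=cu$ and $a\equiv1$, hence $g(u)=u$ and $\mathbf{h}(u,q)=(cu-q,-u)'$, so that the functional $M_{LDG}$ of \eqref{functionalB} is \emph{bilinear} in its two arguments. Recasting the exact solution as the first-order system \eqref{systdisc}, i.e. $\mathbf{w}=(u,u_x)'$, both $\mathbf{w}$ and the numerical solution $\tilde{\mathbf{w}}=(\tilde u,\tilde q)'$ satisfy $M_{LDG}[\cdot,\mathbf{v}]=0$ for every $\mathbf{v}=(v_u,v_q)'$ with $v_u,v_q\in V^k\cap L^2(\R)$: for $\tilde{\mathbf{w}}$ by construction, and for $\mathbf{w}$ because $u$ is continuous (Sobolev imbedding), so that all jumps $[\mathbf{w}_i]$ vanish and the consistent flux $\hat{\mathbf{h}}$ reduces to $\mathbf{h}$. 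Hence the Galerkin orthogonality $M_{LDG}[\mathbf{e},\mathbf{v}]=0$ holds for $\mathbf{e}=(e_u,e_q)'$, and writing $\p\mathbf{e}=(\p e_u,\p e_q)'$ for the componentwise $L^2$-projection of $\mathbf{e}$ into $V^k$, bilinearity gives the analogue of \eqref{iii}, namely $M_{LDG}[\p\mathbf{e},\p\mathbf{e}]=M_{LDG}[\p\mathbf{e}-\mathbf{e},\p\mathbf{e}]$.

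Next I would split $M_{LDG}=M_{CS}+M_\Levy$, where $M_{CS}$ collects the local convection-diffusion terms (the $u_t$- and $q$-integrals together with all volume and flux terms of \eqref{functionalB}) and $M_\Levy[\mathbf{w},\mathbf{v}]=-b\int_0^T\int_\R\Levy[u]v_u$. For the local part I would invoke the LDG error analysis of Cockburn and Shu \cite{Cockburn/Shu}: evaluated on the diagonal it reproduces the energy quantities, $M_{CS}[\p\mathbf{e},\p\mathbf{e}]=\tfrac12\int_\R(\p e_u)^2(\cdot,T)+\int_0^T\int_\R(\p e_q)^2+\Theta_T[\p\mathbf{e}]$, where the flux and volume contributions collapse into $\Theta_T$ precisely through the identity \eqref{e4} and the semipositive matrix $\C$ (note that in the linear case $c_{11}=\tfrac{|c|}2$ is constant and the antisymmetric off-diagonal entries of $\C$ cancel in the quadratic form, so that $[\mathbf{w}_i]'\C[\mathbf{w}_i]=\tfrac{|c|}2[u_i]^2$), and the time term uses $\p e_u(\cdot,0)=0$, which holds because the initial condition \eqref{initial_condition} makes $\tilde u(\cdot,0)$ the $L^2$-projection of $u_0$. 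Their estimates give moreover that the mixed term $M_{CS}[\p\mathbf{e}-\mathbf{e},\p\mathbf{e}]$ reproduces only a fraction of these quadratic quantities up to an $\mathcal{O}(1)\Delta x^{2k}$ projection remainder, exactly as \eqref{i1}--\eqref{i2} do in the DDG proof.

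The nonlocal part is then treated verbatim as in \eqref{i3} with $e$ replaced by $e_u$: Lemma~\ref{lemmanonloc} gives $M_\Levy[\p\mathbf{e},\p\mathbf{e}]=\tfrac{bc_\lambda}2\int_0^T|\p e_u|^2_{H^{\lambda/2}(\R)}$, and the splitting of $\int_0^T\int_\R\Levy[e_u]\p e_u$ together with the projection estimate \eqref{aaa}, $\|e_u-\p e_u\|^2_{H^{\lambda/2}(\R)}\le\mathcal{O}(1)\Delta x^{2k+2-\lambda}$, controls $M_\Levy[\p\mathbf{e}-\mathbf{e},\p\mathbf{e}]$ in terms of $|\p e_u|^2_{H^{\lambda/2}(\R)}$ and $|e_u|^2_{H^{\lambda/2}(\R)}$. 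Inserting all of this into $M_{LDG}[\p\mathbf{e},\p\mathbf{e}]=M_{LDG}[\p\mathbf{e}-\mathbf{e},\p\mathbf{e}]$ and collecting terms as in the proof of Theorem~\ref{DDGlin} leaves the favorable quantities $\int_\R(\p e_u)^2(\cdot,T)$, $\int_0^T\int_\R(\p e_q)^2$, $\Theta_T[\p\mathbf{e}]$ and $bc_\lambda\int_0^T|e_u|^2_{H^{\lambda/2}(\R)}$ bounded by $\mathcal{O}(1)\Delta x^{2k}$. To finish I would pass from the projection back to the error: the $L^2$-projection errors for $e_u(\cdot,T)$ and for $e_q$ are $\mathcal{O}(\Delta x^{2k+2})$ and hence absorbed, $\Theta_T[\p\mathbf{e}]$ passes to $\Theta_T[\mathbf{e}]$ up to $\mathcal{O}(1)\Delta x^{2k}$ as in \cite{Cockburn/Shu}, and \eqref{aaa} gives $|\p e_u|^2_{H^{\lambda/2}(\R)}=|e_u|^2_{H^{\lambda/2}(\R)}-\mathcal{O}(1)\Delta x^{2k+2-\lambda}$, which yields the stated estimate.

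The main obstacle I anticipate is not any single hard estimate -- the local terms are covered by \cite{Cockburn/Shu} and the nonlocal term by the computation already performed in \eqref{i3} -- but rather checking that these two independent analyses superpose cleanly. One must verify that, after the Galerkin cancellation, every contribution lands either among the nonnegative energy quantities or inside an $\mathcal{O}(1)\Delta x^{2k}$ projection remainder, with no cross term between the auxiliary variable $e_q$ and the nonlocal seminorm spoiling a sign; this is ensured because $M_\Levy$ acts only on the $u$-component. As noted after the stability theorem, the flux $\hat{\mathbf{h}}$ uses no derivatives of $\tilde{\mathbf{w}}$, so unlike the DDG case no extra integrability of derivatives is required and the bookkeeping is correspondingly lighter.
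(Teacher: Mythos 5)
Your proposal is correct and follows essentially the same route as the paper's proof: the same splitting $M_{LDG}=M_l+M_\Levy$, the same Galerkin orthogonality/bilinearity identity \eqref{mmm}, the Cockburn--Shu estimate \eqref{k1} for the local mixed term, the diagonal evaluation via \eqref{e4} and Lemma \ref{lemmanonloc}, and the nonlocal treatment \eqref{i3} with \eqref{aaa}, followed by the same passage from $\p\mathbf{e}$ to $\mathbf{e}$. The extra details you supply (the explicit value $c_{11}=|c|/2$, the vanishing initial projection error, and the observation that $M_\Levy$ touches only the $u$-component so no cross term with $e_q$ arises) are consistent with, and slightly more explicit than, what the paper leaves implicit.
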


\begin{proof}
Let us choose a test function $\mathbf{v}=(v_{u},v_{q})'$, $v_u,v_q\in V^{k}\cap L^{2}(\R)$, and define
\begin{align*}
\begin{split}
M_l[\mathbf{w},\mathbf{v}]&=\int_{0}^{T}\int_{\mathbb{R}}u_tv_u+\int_0^T\int_{\R}qv_q -\int_0^T\sum_{i\in\Z}\bigg(\hat{\mathbf{h}}(\mathbf{w}_i)'[\mathbf{v}_i]
+\int_{I_i}\mathbf{h}(\mathbf{w})'\partial_x\mathbf{v}\bigg).
\end{split}
\end{align*}
With this notation at hand, we can write \eqref{functionalB} as
\begin{align*}
\begin{split}
M_{LDG}[\mathbf{w},\mathbf{v}]=M_l[\mathbf{w},\mathbf{v}] +M_\Levy[\mathbf{w},\mathbf{v}],
\end{split}
\end{align*}
where $M_\Levy$ is defined in the previous proof. Proceeding as in the proof of Theorem \ref{DDGlin}, we find that
\begin{align}\label{mmm}
\begin{split}
M_{LDG}[\p\mathbf{e},\p\mathbf{e}]=M_{LDG}[\p\mathbf{e}-\mathbf{e},\p\mathbf{e}].
\end{split}
\end{align}
In  \cite{Cockburn/Shu} (Lemma 2.4) it is proved that
\begin{align}\label{k1}
M_l(\p \mathbf{e}-\mathbf{e},\p \mathbf{e})=\frac{1}{2}\Theta_T[\p \mathbf{e}] +\frac{1}{2}\int_0^T\int_\R\p e_q^2+\mathcal{O}(1)\Delta x^{2k}.
\end{align}
By \eqref{functionalB}, \eqref{e4}, and Lemma \ref{lemmanonloc},
\begin{align*}
\begin{split}
M_{LDG}[\p \mathbf{e},\p \mathbf{e}]=\int_{0}^{T}\int_{\mathbb{R}}(\p e^2_u)_t +\int_0^T\int_{\R}\p e_q^2+\Theta_T[\p
\mathbf{e}]+\frac{bc_\lambda}{2}\int_0^T|\p e_u|^2_{H^{\lambda/2}(\R)}.
\end{split}
\end{align*}
By inserting this inequality along with \eqref{k1} and \eqref{i3} into \eqref{mmm}, we find that
\begin{align*}
\begin{split}
\int_{0}^{T}\int_{\mathbb{R}}(\p e^2_u)_t&
+\frac{1}{2}\int_0^T\int_{\R}\p e_q^2+\frac{1}{2}\Theta_T[\p\mathbf{e}]\\
&+\frac{bc_\lambda}{4}\int_0^T|\p e_u|^2_{H^{\lambda/2}(\R)} +\frac{bc_\lambda}{4}\int_0^T|e_u|^2_{H^{\lambda/2}(\R)}=\mathcal{O}(1)\Delta x^{2k}.
\end{split}
\end{align*}
The conclusion now follows as in the proof of Theorem \ref{DDGlin}.
\end{proof}

\section{Convergence for nonlinear equations}
In the nonlinear case we will show that the DDG method converges towards an appropriately defined entropy solution of \eqref{1} whenever piecewise constant
elements are used. In what follows we need the functions
\begin{align*}
\eta_k(s)&=|s-k|,\\
\eta'_k(s)&=\text{sgn}(s-k),\\
q_k(s)&=\eta'_k(s)(f(s)-f(k)),\\
r_k(s)&=\eta'_k(s)(A(s)-A(k)).
\end{align*}
Remember that $A(u)=\int^ua$, and let $C^{1,\frac{1}{2}}(Q_T)$ denote the H\"{o}lder space of bounded functions $\phi:Q_T\rightarrow\R$ for which there is a
constant $c_\phi>0$ such that
\begin{align*}
\begin{split}
|\phi(x,t)-\phi(y,\tau)|\leq c_\phi\left[|x-y|+\sqrt{|t-\tau|}\right]\quad \text{for all}\quad (x,t),(y,\tau)\in Q_T.
\end{split}
\end{align*}
We now introduce the entropy formulation for \eqref{1}.
\begin{definition}\label{def:entropy}
A function $u\in L^{\infty}(Q_T)$ is a {\em $BV$ entropy solution} of the initial value problem \eqref{1} provided that the following conditions hold:
\begin{Definitions}
\item $u\in L^1(Q_T)\cap BV(Q_T)$;
\label{def:bv}
\item $A(u)\in C^{1,\frac{1}{2}}(Q_T)$;
\label{def:spacetimereg}
\item for all non-negative test functions $\varphi\in C_c^{\infty}(\R\times[0,T))$ and all $k\in\R$,
\begin{align*}
\begin{split}
\int_{Q_T}\eta_k(u)\varphi_t+q_k(u)\varphi_{x}
+r_{k}(u)\varphi_{xx}+\eta'_k(u)\Levy[u]\varphi\ \dx\dt&\\
+\int_{\R}\eta_k(u_0(x))\varphi(0,x)\dx&\geq 0.
\end{split}
\end{align*}
\label{def:entr}
\end{Definitions}
\end{definition}
This definition is a straightforward combination of the one of Wu and Yin \cite{Wu/Yin} (cf.~also \cite{Evje/Karlsen}) for degenerate convection-diffusion
equations ($b=0$) and the one of Cifani \emph{et al.}~\cite{Cifani/ERJ/KHK} for fractional conservation laws ($a\equiv0$). By the regularity of $\varphi$ and
$u$ and Lemma \ref{lemmanonloc}, each term in the entropy inequality \ref{def:entr} is well defined.

\begin{remark}
The $L^1$-contraction property (uniqueness) for $BV$ entropy solutions follows along the lines of \cite{KHK/Ulusoy}, since the $BV$-regularity of $u$ and the
$L^\infty$-bound on $A(u)_x$ makes it possible to recover from \ref{def:entr} the more precise entropy inequality utilized in \cite{KHK/Ulusoy} for $L^1\cap
L^\infty$ entropy solutions.
\end{remark}

We will now prove, under some additional assumptions, that the explicit DDG method with piecewise constant elements (i.e., $k=0$) converges to the $BV$ entropy
solution of \eqref{1}. In addition to convergence for the numerical method, this also gives the first existence result for entropy solutions of \eqref{1}.

\subsection{The explicit DDG method with piecewise constant elements}
When piecewise constant elements are used ($k=0$ in \eqref{udef}), equation \eqref{Ds2} takes the form
\begin{align*}
\int_{I_{i}}\hat{u}_{t}+\hat{f}(\hat{u}_{i+1})-\hat{f}(\hat{u}_{i})-\hat{h}(\hat{u}_{i+1}) +\hat{h}(\hat{u}_{i})=b\int_{I_{i}}\Levy[\hat{u}].
\end{align*}
Since $\hat{u}(x,t)=\sum_{i\in\mathbb{Z}} U_{i}(t)\mathbf{1}_{i}(x)$ (i.e., $\varphi_{0,i}=\mathbf{1}_i$, the indicator function of the interval $I_i$), we can
and will use the admissible flux $\hat{h}(u_i)=\frac{1}{\Delta x}[A(u_i)]$ (which satisfies \eqref{admiss} with $k=0$ and $\beta_0=1$) to rewrite the above
equation as
\begin{align*}
\Delta x\frac{d}{\dt}U_i+\hat{f}(U_i,U_{i+1})-\hat{f}(U_{i-1},U_{i}) -\frac{[A(U_{i+1})]}{\Delta x}+\frac{[A(U_{i})]}{\Delta
x}=b\sum_{j\in\Z}U_j\int_{I_{i}}\Levy[\mathbf{1}_{I_j}].
\end{align*}
For $\Delta t>0$ we set $t_n=n\Delta t$ for $n=\{0,\ldots,N\}$, $T=t_N$, and $\phi^n_i=\phi(x_i,t_n)$ for any function $\phi$. By a forward difference
approximation in time, we obtain the explicit numerical method
\begin{align}\label{Ds3}
\begin{split}
\frac{U_i^{n+1}-U_i^{n}}{\Delta t}&+\frac{\hat{f}(U_i^n,U_{i+1}^n) -\hat{f}(U_{i-1}^n,U_{i}^n)}{\Delta x}\\&-\frac{A(U^n_{i+1})-A(U^n_{i})}{\Delta
x^2}+\frac{A(U^n_{i})-A(U^n_{i-1})}{\Delta x^2}=\frac{b}{\Delta x}\sum_{j\in\Z}G^i_jU_j^n,
\end{split}
\end{align}
where the weights $G^i_j=\int_{I_{i}}\Levy[\mathbf{1}_{I_j}]\text{ for
  all $(i,j)\in\Z\times\Z$.}$ All relevant properties of these
weights are collected in Lemma \ref{matG}. Next we define
\begin{gather*}
D_\pm U_i=\pm\frac{1}{\Delta x}\left(U_{i\pm 1}-U_{i}\right)\quad\text{and}\quad \Levy\langle U^{n}\rangle_{i}=\frac{1}{\Delta
x}\int_{I_{i}}\Levy[\bar{U}^{n}]\dx =\frac{1}{\Delta x}\sum_{j\in\mathbb{Z}}G^i_{j}U_{j}^n,
\end{gather*}
where $\bar{U}^n$ is the piecewise constant interpolant of $U^n$:
$$
\bar{U}^n(x)=U^n_i, \qquad x\in[x_i,x_{i+1}).
$$
The explicit numerical method we study can then be written as
\begin{align}\label{scheme}
\begin{cases}
\frac{U_i^{n+1}-U_i^{n}}{\Delta t}+D_{-} \Big[\hat{f}(U_i^n,U_{i+1}^n)-D_{+}A(U^n_i)\Big]
=b\Levy\langle U^{n}\rangle_{i},\\[0.2cm]
U^0_i=\frac{1}{\Delta x}\int_{I_i}u_0(x)\dx.
\end{cases}
\end{align}
As we will see in what follows, the low-order difference method \eqref{scheme} allows for a complete convergence analysis for general nonlinear equations of
the form \eqref{1}.

Let us now prove that the difference scheme \eqref{scheme} is conservative (\textbf{P}.1), monotone (\textbf{P}.2), and translation invariant (\textbf{P}.3).
\begin{itemize}
\item[(\textbf{P}.1)] Assume $\bar U^n\in L^1(\R)\cap BV(\R)$. By
  Lemma \ref{lemmanonloc}
\begin{align}\label{crucial}
\sum_{i\in\mathbb{Z}}\sum_{j\in\mathbb{Z}}|G^i_jU_{j}^n|\leq \int_\R|\Levy[\bar{U}^n(x)]|\dx\leq c_\lambda
C\|\bar{U}^n\|_{L^1(\R)}^{1-\lambda}|\bar{U}^n|_{BV(\R)}^{\lambda},
\end{align}
and hence we can revert the order of summation to obtain
\begin{align*}
\sum_{i\in\mathbb{Z}}\sum_{j\in\mathbb{Z}}G^i_{j}U_{j}^n= \sum_{j\in\mathbb{Z}}U_{j}^n\sum_{i\in\mathbb{Z}}G^i_{j}=0
\end{align*}
since $\sum_{i\in\mathbb{Z}}G^i_{j}=0$ by Lemma \eqref{matG}. By summing over all $i\in\mathbb{Z}$ on each side of \eqref{scheme}, we then find that
\begin{align*}
\begin{split}
\sum_{i\in\Z}U_i^{n+1}=\sum_{i\in\Z}\Big(U_i^{n} +\frac{\Delta t}{\Delta x}\sum_{j\in\Z}G^i_jU_j^n\Big)=\sum_{i\in\Z}U_i^{n}.
\end{split}
\end{align*}
\item[(\textbf{P}.2)]We show that $U^{n+1}_i$ is an increasing function
of all $\{U^{n}_i\}_{i\in\Z}$. First note that
\begin{align*}
\begin{split}
\frac{\partial U^{n+1}_i}{\partial U^{n}_j}\geq0 \quad\text{for} \quad i\neq j,
\end{split}
\end{align*}
since $\hat{f}$ is monotone and $G^i_j\geq0$ for $i\neq j$ by Lemma \ref{matG}. By  Lemma \ref{matG}  we also  see that $G_i^i=-d_\lambda\Delta
x^{1-\lambda}\leq0,$ and hence
\begin{align*}
\begin{split}
\frac{\partial U^{n+1}_i}{\partial U^{n}_i}=&\ 1-\frac{\Delta t}{\Delta
x}\Big[\partial_{u_1}\hat{f}(U_i^n,U_{i+1}^n)-\partial_{u_2}\hat{f}(U_{i-1}^n,U_{i}^n)\Big]\\
&-2\frac{\Delta t}{\Delta x^2}a(U^n_{i})-\frac{\Delta t}{\Delta x^{\lambda}}d_\lambda.
\end{split}
\end{align*}
Here $\partial_{u_i}\hat{f}$ denotes the derivative of $\hat{f}(u_1,u_2)$ w.r.t.~$u_i$ for $i=1,2$. Therefore the following CFL condition makes the explicit
method \eqref{scheme} monotone:
\begin{align}\label{cfl}
\begin{split}
\frac{\Delta t}{\Delta x}\bigg(\|\partial_{u_1}\hat{f}\|_{L^\infty(\R)} +\|\partial_{u_2}\hat{f}\|_{L^\infty(\R)}\bigg) +\frac{2\Delta t}{\Delta
x^2}\|a\|_{L^\infty(\R)} +d_{\lambda}\frac{\Delta t}{\Delta x^\lambda}\leq1.
\end{split}
\end{align}

\item[(\textbf{P}.3)] Translation invariance ($V^0_i=U^0_{i+1}$ implies
$V_i^n=U^n_{i+1}$) is straightforward since \eqref{scheme} does not depend explicitly on a grid point $x_i$.
\end{itemize}

\begin{remark}
For several well known numerical fluxes $\hat f$ (i.e.~Godunov, Engquist-Osher, Lax-Friedrichs, \emph{etc.}), we may replace
$$\|\partial_{u_1}\hat{f}\|_{L^\infty(\R)}
+\|\partial_{u_2}\hat{f}\|_{L^\infty(\R)}$$ in the above CFL condition by the Lipschitz constant of the original flux $f$.
\end{remark}

In the following, we always assume that the CFL condition \eqref{cfl} holds.

\subsection{Further properties of the explicit DDG method \eqref{scheme}}
Define
\begin{align*}
\begin{split}
\|U\|_{L^{1}(\mathbb{Z})}=\sum_{i\in\Z}|U_i|,\quad
\|U\|_{L^{\infty}(\mathbb{Z})}=\sup_{i\in\Z}|U_i|,\quad\text{and}\quad|U|_{BV(\mathbb{Z})}=\sum_{i\in\Z}|U_{i+1}-U_{i}|.
\end{split}
\end{align*}
\begin{lemma}\label{stab}\*
\begin{itemize}
\item[\emph{i)}]$\|U^n\|_{L^{1}(\Z)}\leq\|u_0\|_{L^{1}(\R)}$,
\item[\emph{ii)}]$\|U^n\|_{L^{\infty}(\Z)}\leq\|u_0\|_{L^{\infty}(\R)}$,
\item[\emph{iii)}]$|U^n|_{BV(\Z)}\leq|u_0|_{BV(\R)}$.
\end{itemize}
\end{lemma}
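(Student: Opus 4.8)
The plan is to establish the three discrete estimates (i)--(iii) by exploiting the monotonicity (\textbf{P}.2), conservation (\textbf{P}.1), and translation invariance (\textbf{P}.3) properties just proved, following the classical pattern for monotone schemes adapted to the nonlocal setting. These three properties are precisely the ingredients one needs: monotonicity plus conservation gives $L^1$-contraction and hence the $L^1$ and $BV$ bounds, while monotonicity plus a comparison with constant states gives the $L^\infty$ bound.

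For part (ii), the $L^\infty$-bound, I would first observe that constant states are (nearly) preserved by the scheme. The key point is that for a spatially constant grid function $U_j\equiv c$, the convection and diffusion difference operators vanish by consistency of $\hat f$ and telescoping of $A$, and the nonlocal term vanishes because $\sum_j G^i_j = 0$ (Lemma \ref{matG}). Hence constants are steady states of \eqref{scheme}. Writing the update as $U_i^{n+1} = \mathcal{H}(\ldots,U_{i-1}^n,U_i^n,U_{i+1}^n,\ldots)$ where $\mathcal H$ is nondecreasing in every argument under the CFL condition \eqref{cfl}, I would apply $\mathcal H$ to the ordered pair of grid functions $U^n$ and the constant $\|U^n\|_{L^\infty(\Z)}$ (and likewise the constant $-\|U^n\|_{L^\infty(\Z)}$ from below) to conclude $\|U^{n+1}\|_{L^\infty(\Z)} \le \|U^n\|_{L^\infty(\Z)}$; iterating down to $n=0$ and using $|U_i^0|\le \|u_0\|_{L^\infty(\R)}$ from the averaging in \eqref{scheme} finishes this part.

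For parts (i) and (iii), I would derive the discrete $L^1$-contraction estimate $\sum_i |U_i^{n+1}-V_i^{n+1}| \le \sum_i |U_i^n - V_i^n|$ for any two solutions $U,V$ of \eqref{scheme}. The standard Crandall--Tartar argument applies: a scheme that is monotone and conservative is automatically $L^1$-contractive. Concretely, using $|a-b| = \max(a,b)+\max(-a,-b) = (a\vee b)-(a\wedge b)$ together with monotonicity of the update map and the conservation identity $\sum_i U_i^{n+1}=\sum_i U_i^n$, one obtains the contraction. The only nonstandard term is the nonlocal contribution $\frac{b}{\Delta x}\sum_j G^i_j U_j^n$; here I need the absolute convergence \eqref{crucial} to justify reverting the order of summation, and the sign information from Lemma \ref{matG} ($G^i_j\ge 0$ for $i\ne j$, $G^i_i\le 0$, $\sum_i G^i_j=0$) to see that the nonlocal operator is itself monotone and conservative, so it fits the same framework. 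Estimate (i) then follows by taking $V\equiv 0$ (using $f(0)=0$, $A$ affine-invariant, and $G^i_j$ annihilating constants), and estimate (iii) follows by applying the $L^1$-contraction to $U^n$ and its unit spatial translate $V^n_i := U^n_{i+1}$, which is again a solution by translation invariance (\textbf{P}.3), so that $|U^n|_{BV(\Z)} = \|U^n - V^n\|_{L^1(\Z)}$ is nonincreasing in $n$.

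The main obstacle I anticipate is the careful bookkeeping of the nonlocal term in the contraction argument: unlike the purely local convection-diffusion case, the update $U_i^{n+1}$ depends on \emph{all} the $U_j^n$ through the infinite sum $\sum_j G^i_j U_j^n$, so the usual finite-stencil manipulations must be replaced by arguments that control doubly-infinite sums. The bound \eqref{crucial} guarantees these sums are absolutely convergent whenever $\bar U^n \in L^1(\R)\cap BV(\R)$, which is exactly what estimates (i) and (iii) provide inductively; thus one must be slightly careful to run the induction so that the summability needed to justify the rearrangements is available at each step. Once the sign structure of $G$ from Lemma \ref{matG} is in hand, however, the nonlocal operator behaves formally like a monotone conservative diffusion, and the Crandall--Tartar machinery goes through verbatim.
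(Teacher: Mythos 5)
Your proposal is correct and follows essentially the same route as the paper: the paper's proof simply invokes properties (\textbf{P}.1)--(\textbf{P}.3) and cites the Crandall--Tartar and Lucier results, which is exactly the machinery you unpack (monotonicity plus conservation giving $L^1$-contraction, comparison with constants for $L^\infty$, and translation invariance for $BV$). Your additional care with the infinite-stencil nonlocal term, justified via \eqref{crucial} and Lemma \ref{matG}, is precisely what is needed to make the cited classical arguments go through in this setting.
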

\begin{proof}
Since the numerical method \eqref{scheme} is conservative monotone and translation invariant, the results due to Crandall-Tartar
\cite{Crandall/Tartar,Evje/Karlsen} and Lucier \cite{Lucier,Evje/Karlsen} apply.
\end{proof}

For all $(x,t)\in R^n_i=[x_i,x_{i+1})\times[t_n,t_{n+1})$, let $\hat{u}_{\Delta x}(x,t)$ be the time-space bilinear interpolation of $U^n_i$, i.e.
\begin{align}\label{interp}
\begin{split}
\hat{u}_{\Delta x}(x,t)=U_i^n&+(U_{i+1}^n-U_i^n)\left(\frac{x-i\Delta x}{\Delta x}\right)+(U_i^{n+1}-U_i^n)\left(\frac{t- n\Delta t}{\Delta t}\right)\\
&+(U_{i+1}^{n+1}-U_i^{n+1}-U_{i+1}^n+U_i^n)\left(\frac{x-i\Delta x}{\Delta x}\right)\left(\frac{t-n\Delta t}{\Delta t}\right).
\end{split}
\end{align}
Note that $\hat{u}_{\Delta x}$ is continuous and a.e.~differentiable on $Q_T$. We need the above bilinear interpolation -- rather than a piecewise constant one
-- to prove the H\"{o}lder regularity in \ref{def:spacetimereg}. We will show that the functions $A(\hat{u}_{\Delta x})$ enjoy H\"{o}lder regularity as in
\ref{def:spacetimereg}, and then via an Ascoli-Arzel\`{a} type of argument, so does the limit $A(u)$.

The following lemmas which are needed in the proof of Theorem \ref{th:existence}, are nonlocal generalizations of the ones proved in \cite{Evje/Karlsen}. In
what follows we assume $f\in C^1(\R)$, and note that the general case follows by approximation as in \cite{Evje/Karlsen}.

\begin{lemma}\label{lemmaAP1}
\begin{align}\label{first}
\begin{split}
\bigg\|\hat{f}(U_i^n,U_{i+1}^n)~-~&D_{+}A(U^n_i)-\sum_{k=-\infty}^i\sum_{j\in\Z}G^k_jU_j^n\bigg\|_{L^{\infty}(\Z)}\\
&\leq\bigg\|\hat{f}(U_i^0,U_{i+1}^0)-D_{+}A(U^0_i)-\sum_{k=-\infty}^i\sum_{j\in\Z}G^k_jU_j^0\bigg\|_{L^{\infty}(\Z)},
\end{split}
\end{align}
\begin{align}\label{second}
\begin{split}
\bigg|\hat{f}(U_i^n,U_{i+1}^n)~-~&D_{+}A(U^n_i)-\sum_{k=-\infty}^i\sum_{j\in\Z}G^k_jU_j^n\bigg|_{BV(\Z)}\\
&\leq\bigg|\hat{f}(U_i^0,U_{i+1}^0)-D_{+}A(U^0_i)-\sum_{k=-\infty}^i\sum_{j\in\Z}G^k_jU_j^0\bigg|_{BV(\Z)}.
\end{split}
\end{align}
\end{lemma}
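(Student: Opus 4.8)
The plan is to prove both estimates \eqref{first} and \eqref{second} by showing that the quantity
\begin{align*}
V_i^n:=\hat{f}(U_i^n,U_{i+1}^n)-D_{+}A(U^n_i)-\sum_{k=-\infty}^i\sum_{j\in\Z}G^k_jU_j^n
\end{align*}
satisfies a monotone, conservative, translation-invariant difference scheme of its own, so that the $L^\infty$- and $BV$-contraction properties (exactly the Crandall--Tartar and Lucier results invoked in Lemma \ref{stab}) can be applied to $V^n$ instead of $U^n$. The reason to expect this is that $V_i^n$ is essentially the (negative) spatial numerical flux of the scheme: rewriting \eqref{scheme} in conservation form, one has
\begin{align*}
\frac{U_i^{n+1}-U_i^n}{\Delta t}=-\frac{1}{\Delta x}\left(\hat{f}(U_i^n,U_{i+1}^n)-D_+A(U_i^n)\right)+\frac{1}{\Delta x}\left(\hat{f}(U_{i-1}^n,U_i^n)-D_+A(U_{i-1}^n)\right)+b\Levy\langle U^n\rangle_i.
\end{align*}
The first step is therefore to identify the telescoping structure: since $b\Levy\langle U^n\rangle_i=\frac{b}{\Delta x}\sum_j G^i_j U_j^n$ and the extra term in $V_i^n$ is $-\sum_{k\le i}\sum_j G^k_j U_j^n$, the discrete difference $D_-$ applied to this cumulative nonlocal sum recovers precisely $-\frac{1}{\Delta x}\sum_j G^i_j U_j^n$, so that the full right-hand side of \eqref{scheme} collapses into a clean discrete divergence $-D_-V_i^n$.

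Once \eqref{scheme} is written as $U_i^{n+1}=U_i^n-\frac{\Delta t}{\Delta x}\left(V_i^n-V_{i-1}^n\right)$, the second step is to differentiate the definition of $V_i^{n+1}$ with respect to the $U_j^n$, using the chain rule and the expression $U_i^{n+1}=U_i^n-\frac{\Delta t}{\Delta x}(V_i^n-V_{i-1}^n)$, to show that $V_i^{n+1}$ is a monotone (nondecreasing) function of the family $\{V_\ell^n\}_{\ell\in\Z}$. Here the monotonicity already established in property (\textbf{P}.2) — the very same sign conditions that give $\partial U_i^{n+1}/\partial U_j^n\ge0$ under the CFL condition \eqref{cfl} — is exactly what is needed, since $V$ is built from the same monotone flux $\hat f$, the same (monotone, nonpositive-diagonal) nonlocal weights $G^i_j$, and the increasing map $A$. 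Translation invariance of the $V$-scheme is immediate from (\textbf{P}.3), and conservation follows from the same summation argument as in (\textbf{P}.1), with the absolute convergence \eqref{crucial} justifying the interchange of summation. Given these three properties, the Crandall--Tartar and Lucier theorems yield nonexpansivity of $V^n$ in $L^\infty(\Z)$ and $BV(\Z)$ respectively, which are precisely \eqref{first} and \eqref{second}.

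**The main obstacle** I anticipate is the careful bookkeeping of the nonlocal cumulative sum $\sum_{k=-\infty}^i\sum_{j\in\Z}G^k_jU_j^n$: one must verify that this tail sum is well-defined and finite (which requires the a priori bounds of Lemma \ref{stab} together with the decay estimate \eqref{crucial}), that the interchange of the $k$- and $j$-summations is legitimate, and that $D_-$ of the cumulative term genuinely telescopes to the pointwise nonlocal contribution without boundary-at-infinity defects. A related subtlety is confirming that $V_i^{n+1}$ depends monotonically on the \emph{correct} neighbors: because the nonlocal term couples all indices, the derivative $\partial V_i^{n+1}/\partial V_\ell^n$ must be checked to inherit the nonnegativity from (\textbf{P}.2) uniformly in $\ell$, and this is where the semipositivity of the weights $G^i_j$ (off-diagonal nonnegative, diagonal $=-d_\lambda\Delta x^{1-\lambda}$) from Lemma \ref{matG} and the CFL condition \eqref{cfl} must be combined. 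Once the scheme for $V$ is shown to satisfy (\textbf{P}.1)--(\textbf{P}.3), both contraction estimates follow from the abstract results verbatim, so the entire difficulty is concentrated in establishing that $V$ solves a scheme of the same admissible type.
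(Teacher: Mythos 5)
Your proposal follows essentially the same route as the paper's own proof: there too the argument hinges on the flux quantity $V_i^n$ (defined in the paper as $\frac{\Delta x}{\Delta t}\sum_{k=-\infty}^i(U^n_k-U^{n-1}_k)$, which by the scheme \eqref{scheme} equals exactly minus your $V$), on showing that $V$ evolves by a scheme that is again conservative, monotone and translation invariant, and on invoking the same Crandall--Tartar/Lucier nonexpansivity machinery, with the differenced quantity $Z^n_i=V^n_i-V^n_{i-1}$ giving \eqref{second}. The one point where your sketch is looser than the paper is the monotonicity step: since $V^{n+1}$ is not literally a function of $\{V^n_\ell\}$ (so a chain-rule differentiation $\partial V^{n+1}_i/\partial V^n_\ell$ is not well posed), the paper instead linearizes the increments of $\hat f$ and $A$ by the mean value theorem, \eqref{r1}--\eqref{r2}, and rewrites the nonlocal increment via the summation-by-parts identity \eqref{rights} (using $G^{k}_{j+1}=G^{k-1}_j$), which together close the recursion into the linear scheme \eqref{a2} whose coefficients are nonnegative under the CFL condition \eqref{cfl} -- exactly the rigorous form of the sign check you anticipated as the main obstacle.
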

\begin{proof}
\emph{Inequality \eqref{first}.} Let us start by defining $V_i^n=\frac{\Delta x}{\Delta t}\sum_{k=-\infty}^i(U_k^n-U_k^{n-1})$. This sum is finite since
$U^n\in L^1(\Z)$ for all $n\geq0$. If we use \eqref{scheme}, we can write
\begin{align}\label{a10}
\begin{split}
V_i^{n+1}=-\Big[\hat{f}(U_i^n,U_{i+1}^n)-D_{+}A(U^n_i)\Big] +\sum_{k=-\infty}^i\sum_{j\in\Z}G^k_jU_j^n.
\end{split}
\end{align}
Here we have used that $U^n\in L^1(\Z)\cap BV(\Z)$, $f$ and $A$ are Lipschitz continuous, and $f(0)=0$ to conclude that the sum
$\sum_{k=-\infty}^iD_{-}[\hat{f}(U_j^n,U_{j+1}^n) -D_{+}A(U^n_j)]$ is finite and has value $[\hat{f}(U_i^n,U_{i+1}^n)-D_{+}A(U^n_i)]$. Next we rewrite the
right-hand side of \eqref{a10} in terms of $\{V_i^n\}_{i\in\Z}$. By \eqref{a10},
\begin{align}\label{a1}
\begin{split}
V_i^{n+1}&=V^n_i-\Big[\hat{f}(U^n_i,U^n_{i+1})-\hat{f}(U^{n-1}_i,U^{n-1}_{i+1})
-D_+(A(U^n_i)-A(U^{n-1}_i))\Big]\\
&\qquad+\sum_{k=-\infty}^{i}\sum_{j\in\Z}G^k_j(U_j^{n}-U_j^{n-1}).
\end{split}
\end{align}
We prove that
\begin{align}\label{rights}
\begin{split}
\sum_{k=-\infty}^{i}\sum_{j\in\Z}G^k_j(U_j^{n}-U_j^{n-1})= \frac{\Delta t}{\Delta x}\sum_{j\in\Z}G^i_jV_j^{n}.
\end{split}
\end{align}
Indeed, note that $D_-V^n_j=\frac{1}{\Delta t}\left(U_j^{n}-U_j^{n-1}\right)$ and
\begin{align*}
\begin{split}
\sum_{j\in\Z}G^k_jV_{j-1}^{n}=\sum_{j\in\Z}G^{k}_{j+1}V_{j}^{n}=\sum_{j\in\Z}G^{k-1}_jV_{j}^{n}
\end{split}
\end{align*}
since $G^{k}_{j+1}=G^{k-1}_j$. Thus,
\begin{align*}
\begin{split}
\sum_{k=-\infty}^{i}\sum_{j\in\Z}G^k_j(U_j^{n}-U_j^{n-1})&=
\Delta t\sum_{k=-\infty}^{i}\sum_{j\in\Z}G^k_jD_-V^n_j\\
&=\frac{\Delta t}{\Delta x}\sum_{k=-\infty}^{i}\sum_{j\in\Z}G^k_j(V_j^{n}-V_{j-1}^{n})\\
&=\frac{\Delta t}{\Delta x}\sum_{k=-\infty}^{i}
\sum_{j\in\Z}G^k_jV_j^{n}-\frac{\Delta t}{\Delta x}\sum_{k=-\infty}^{i}\sum_{j\in\Z}G^k_jV_{j-1}^{n}\\
&=\frac{\Delta t}{\Delta x}\sum_{k=-\infty}^{i}
\sum_{j\in\Z}G^k_jV_j^{n}-\frac{\Delta t}{\Delta x}\sum_{k=-\infty}^{i}\sum_{j\in\Z}G^{k-1}_jV_{j}^{n}\\
&=\frac{\Delta t}{\Delta x}\sum_{k=-\infty}^{i}
\sum_{j\in\Z}G^k_jV_j^{n}-\frac{\Delta t}{\Delta x}\sum_{k=-\infty}^{i-1}\sum_{j\in\Z}G^{k}_jV_{j}^{n}\\
&=\frac{\Delta t}{\Delta x}\sum_{j\in\Z}G^i_jV_j^{n}.
\end{split}
\end{align*}
Using Taylor expansions, we can replace the nonlinearities $\hat{f}, A$ with linear approximations as follows. We write
\begin{align}\label{r1}
\begin{split}
\hat{f}(U^n_i,U^n_{i+1})-\hat{f}(U^{n-1}_i,U^{n-1}_{i+1}) =\Delta t\hat{f}_{1,i}^nD_-V^n_i+\Delta t\hat{f}_{2,i}^nD_-V^n_{i+1},
\end{split}
\end{align}
where $\hat{f}_{1,i}^n=\partial_{1}\hat{f}(\alpha_i^n,U^n_{i+1})$, $\hat{f}_{2,i}^n =\partial_{2}\hat{f}(U^{n-1}_{i},\tilde{\alpha}_{i+1}^n)$ and
$\alpha_i^n,\tilde{\alpha}_i^n\in(U^{n-1}_i,U^n_i)$. Similarly, we write
\begin{align}\label{r2}
\begin{split}
A(U^n_i)-A(U^{n-1}_i)=a(\beta_i^n)(U^n_i-U^{n-1}_i)=\Delta ta_i^nD_-V^n_i,
\end{split}
\end{align}
where $a_i^n=a(\beta_i^n)$ and $\beta_i^n\in(U^{n-1}_i,U^n_i)$. Inserting \eqref{rights} and \eqref{r1}-\eqref{r2} into expression \eqref{a1} returns
\begin{align}\label{888}
\begin{split}
V_i^{n+1}&=V^n_i-\Delta t(\hat{f}_{1,i}^nD_-V^n_i +\hat{f}_{2,i}^nD_-V^n_{i+1})+\Delta tD_+(a_i^nD_-V_i^n) +\frac{\Delta t}{\Delta x}\sum_{j\in\Z}G^i_jV_j^{n}
\end{split}
\end{align}
or
\begin{align}\label{a2}
\begin{split}
V^{n+1}_i=A^n_iV^n_{i-1}+B^n_iV^n_i+C^n_iV^n_{i+1} +\frac{\Delta t}{\Delta x}\sum_{j\in\Z}G^i_jV_j^{n},
\end{split}
\end{align}
where
\begin{align*}
\begin{split}
A^n_i&=\left[\frac{\Delta t}{\Delta x}\hat{f}_{1,i}^n+\frac{\Delta t}{\Delta x^2}a_i^n\right],\\
B^n_i&=\left[1-\frac{\Delta t}{\Delta x}(\hat{f}_{1,i}^n-\hat{f}_{2,i}^n)
-\frac{\Delta t}{\Delta x^2}(a_i^n+a^n_{i+1})\right],\\
C^n_i&=\left[\frac{\Delta t}{\Delta x^2}a_{i+1}^n-\frac{\Delta t}{\Delta x}\hat{f}_{2,i}^n\right].
\end{split}
\end{align*}
Since $\hat{f}$ is monotone and $a\geq0$, $A^n_i,C^n_i\geq0$. Moreover, $B^n_i+\frac{\Delta t}{\Delta x}G^i_i\geq0$ since the CFL condition \eqref{cfl} holds
true. Thus, since \eqref{a2} is conservative, monotone, and translation invariant (cf.~the proof of Lemma \ref{stab}),
$\|V^{n}\|_{L^\infty(\Z)}\leq\ldots\leq\|V^{1}\|_{L^\infty(\Z)}$, and the conclusion follows from \eqref{a10}.

\emph{Inequality \eqref{second}.} Let us introduce $Z^n_i=V^n_i-V^n_{i-1}$. Note that, since $G^{i-1}_{j-1}=G_j^i$ for all $(i,j)\in\Z\times\Z$,
\begin{align*}
\begin{split}
\sum_{j\in\Z}\left(G^i_jV_j^{n}-G^{i-1}_jV_j^{n}\right)= \sum_{j\in\Z}\left(G^i_jV_j^{n}-G^{i}_jV_{j-1}^{n}\right)=\sum_{j\in\Z}G^i_jZ_j^{n}.
\end{split}
\end{align*}
Thus, \eqref{888} can be rewritten as
\begin{align*}
\begin{split}
Z_i^{n+1}&=Z^n_i-\Delta tD_-(\hat{f}_{1,i}^nZ^n_i +\hat{f}_{2,i}^nZ^n_{i+1})+\Delta t D_-D_+(a_i^nZ_i^n) +\frac{\Delta t}{\Delta x}\sum_{j\in\Z}G^i_jZ_j^{n}
\end{split}
\end{align*}
or
\begin{align}\label{a11}
\begin{split}
Z^{n+1}_i=\bar{A}^n_iZ^n_{i-1}+\bar{B}^n_iZ^n_i+\bar{C}^n_iZ^n_{i+1} +\frac{\Delta t}{\Delta x}\sum_{j\in\Z}G^{i}_jZ_j^{n},
\end{split}
\end{align}
where $\bar{A}^n_i,\bar{B}^n_i,\bar{C}^n_i$ have similar properties as $A^n_i,B^n_i,C^n_i$. Proceeding as in the first part of the proof, \eqref{a11} can be
shown to be conservative, monotone, and translation invariant. Thus $\|Z^{n}\|_{L^1(\Z)}\leq\ldots\leq\|Z^{1}\|_{L^1(\Z)}$, and the conclusion follows from
\eqref{a10}.  We refer to \cite{Evje/Karlsen} for the precise details concerning $\bar{A}^n_i,\bar{B}^n_i,\bar{C}^n_i$.
\end{proof}

The next lemma ensures that the numerical solutions are uniformly $L^1$-Lipschitz in time (and hence $BV$ in both space and time by Lemma \ref{stab}).

\begin{lemma}\label{Liptime}
\begin{align*}
\begin{split}
\sum_{i\in\Z}|U_i^m-U_i^n|\leq\Big|\hat{f}(U_i^0,U_{i+1}^0)-D_{+}A(U^0_i) -\sum_{k=-\infty}^{i}\sum_{j\in\Z}G^k_jU^0_j\Big|_{BV(\R)}\frac{\Delta t}{\Delta
x}|m-n|.
\end{split}
\end{align*}
\end{lemma}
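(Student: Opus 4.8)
The plan is to reduce the general time increment $U^m-U^n$ to a single-step increment $U^{n+1}-U^n$, to identify the sum of absolute values of that increment with the discrete $BV$-seminorm of the auxiliary sequence $V^{n+1}$ introduced in the proof of Lemma \ref{lemmaAP1}, and then to invoke inequality \eqref{second} to control that seminorm by its value at $n=0$.

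First I would recall the sequence $V_i^n=\frac{\Delta x}{\Delta t}\sum_{k=-\infty}^i(U_k^n-U_k^{n-1})$ and the identity \eqref{a10}, which I rewrite as
\[
-V_i^{n+1}=\hat{f}(U_i^n,U_{i+1}^n)-D_{+}A(U^n_i)-\sum_{k=-\infty}^i\sum_{j\in\Z}G^k_jU_j^n .
\]
In particular, the quantity whose $BV$-seminorm appears on the right-hand side of the statement is precisely $-V_i^1$. Next, differencing the definition of $V^{n+1}$ in the index $i$ gives $V_i^{n+1}-V_{i-1}^{n+1}=\frac{\Delta x}{\Delta t}(U_i^{n+1}-U_i^{n})$, so that
\[
\sum_{i\in\Z}|U_i^{n+1}-U_i^n|=\frac{\Delta t}{\Delta x}\sum_{i\in\Z}|V_i^{n+1}-V_{i-1}^{n+1}|=\frac{\Delta t}{\Delta x}\,|V^{n+1}|_{BV(\Z)} .
\]

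Since the $BV$-seminorm is insensitive to a global change of sign, $|V^{n+1}|_{BV(\Z)}=|-V^{n+1}|_{BV(\Z)}$; by the identity above together with inequality \eqref{second} of Lemma \ref{lemmaAP1}, this seminorm is non-increasing in $n$ and is therefore bounded by its value at $n=0$, namely $\big|\hat{f}(U_i^0,U_{i+1}^0)-D_{+}A(U^0_i)-\sum_{k=-\infty}^i\sum_{j\in\Z}G^k_jU_j^0\big|_{BV(\Z)}$. This yields the one-step estimate. Finally, assuming without loss of generality $m>n$, I would telescope and use the triangle inequality, $\sum_{i\in\Z}|U_i^m-U_i^n|\leq\sum_{l=n}^{m-1}\sum_{i\in\Z}|U_i^{l+1}-U_i^l|$, applying the one-step bound to each of the $|m-n|$ summands to produce the factor $\frac{\Delta t}{\Delta x}|m-n|$.

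The substantive work is already carried out in Lemma \ref{lemmaAP1}: the genuinely nontrivial fact is the $n$-monotonicity of the discrete $BV$-seminorm of $-V^{n+1}$, established there through the conservative, monotone, and translation-invariant structure of the recursion \eqref{a11}. Consequently the only obstacle remaining here is purely bookkeeping, the identity linking $\sum_{i\in\Z}|U_i^{n+1}-U_i^n|$ to $|V^{n+1}|_{BV(\Z)}$ through the definition of $V$, together with the elementary telescoping; I would simply double-check that the index shift $U_i^{n+1}-U_i^n=\frac{\Delta t}{\Delta x}(V_i^{n+1}-V_{i-1}^{n+1})$ is the correct one, and note that the $BV(\R)$ on the right-hand side is to be read as the discrete seminorm $|\cdot|_{BV(\Z)}$ of the corresponding piecewise constant interpolant.
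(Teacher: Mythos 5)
Your proof is correct and takes essentially the same route as the paper: telescope the time increment, identify the one-step difference $\sum_{i\in\Z}|U_i^{l+1}-U_i^l|$ as $\frac{\Delta t}{\Delta x}$ times the discrete $BV$-seminorm of the flux expression (which, via \eqref{a10}, is exactly your $-V^{l+1}$), and then apply inequality \eqref{second} of Lemma \ref{lemmaAP1}. The only cosmetic difference is that the paper writes the one-step increment as $\Delta t$ times $D_-$ applied to the flux expression, using the identity $D_-\big(\sum_{k=-\infty}^{i}\sum_{j\in\Z}G^k_jU_j\big)=\frac{1}{\Delta x}\sum_{j\in\Z}G^i_jU_j$, instead of routing through the named auxiliary sequence $V$.
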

\begin{proof}
Let us assume that $m>n$, the case $m<n$ is analogous. Note that
\begin{align*}
\begin{split}
&\sum_{i\in\Z}|U_i^m-U_i^n|\leq\sum_{l=n}^{m-1}\sum_{i\in\Z}|U_i^{l+1}-U_i^{l}|\\
&\leq\Delta t\sum_{l=n}^{m-1}\sum_{i\in\Z}\Bigg|D_{-}\Big[\hat{f}(U_i^l,U_{i+1}^l) -D_{+}A(U^l_i)\Big]-\frac{1}{\Delta x}\sum_{j\in\Z}G^i_jU_j^l\Bigg|.
\end{split}
\end{align*}
Since $D_-\left(\sum_{k=-\infty}^{i}\sum_{j\in\Z}G^k_jU_j\right) =\frac{1}{\Delta x}\sum_{j\in\Z}G^i_jU_j$,
\begin{align*}
\begin{split}
&\sum_{i\in\Z}|U_i^m-U_i^n|\\
&\leq\Delta t\sum_{l=n}^{m-1}\sum_{i\in\Z} \Bigg|D_{-}\Big[\hat{f}(U_i^l,U_{i+1}^l)-D_{+}A(U^l_i) -\sum_{k=-\infty}^{i}\sum_{j\in\Z}G^k_jU^l_j\Big]\Bigg|.
\end{split}
\end{align*}
To conclude, use \eqref{second}.
\end{proof}

We now show that the numerical solutions satisfy a discrete version of \ref{def:spacetimereg}.

\begin{lemma}\label{lemmalip}
If $|\hat{f}(U_i^0,U_{i+1}^0)-D_{+}A(U^0_i) -\sum_{k=-\infty}^i\sum_{j\in\Z}G^k_jU_j^0|_{BV(\Z)}<\infty$, then
\begin{align*}
\begin{split}
|A(U^m_i)-A(U^n_j)|=\mathcal{O}(1) \left[|i-j|\Delta x+\sqrt{|m-n|\Delta t}\right].
\end{split}
\end{align*}
\end{lemma}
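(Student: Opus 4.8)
The plan is to establish the two estimates in Lemma~\ref{lemmalip} separately, treating the spatial H\"older regularity (the $|i-j|\Delta x$ term) and the temporal H\"older regularity (the $\sqrt{|m-n|\Delta t}$ term) by different mechanisms. For the spatial bound, I would argue that since $A$ is Lipschitz continuous, $|A(U^n_i)-A(U^n_j)| \leq \|a\|_{L^\infty(\R)}\,|U^n_i - U^n_j| \leq \|a\|_{L^\infty(\R)}\sum_{l}|U^n_{l+1}-U^n_l|$ summed over the indices between $i$ and $j$. A crude bound of this type gives $\mathcal{O}(1)|i-j|$ via the uniform $BV$ estimate of Lemma~\ref{stab}, but that is too weak since we need $|i-j|\Delta x$. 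The correct route is to use the finite-difference quantity $D_+A(U^n_i)=\frac{1}{\Delta x}[A(U^n_{i+1})-A(U^n_i)]$ and observe that $A(U^n_i)-A(U^n_j)=\Delta x\sum_{k=j}^{i-1}D_+A(U^n_k)$; then I would invoke the combination $\hat{f}(U^n_i,U^n_{i+1})-D_+A(U^n_i)-\sum_{k=-\infty}^i\sum_{j}G^k_jU^n_j$ whose $L^\infty$-norm is controlled uniformly in $n$ by \eqref{first} of Lemma~\ref{lemmaAP1}. Using the $L^\infty$-bound on $\hat f$ (consistency plus $L^\infty$-stability of $U^n$) and the summability of the nonlocal tail $\sum_{k=-\infty}^i\sum_j G^k_j U^n_j$ (controlled via \eqref{crucial} and the $L^1\cap BV$ bounds), one extracts a uniform $L^\infty$-bound on $D_+A(U^n_i)$, which yields $|A(U^n_i)-A(U^n_j)|\leq \mathcal{O}(1)|i-j|\Delta x$.

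For the temporal bound, the natural idea is to first reduce to the case $i=j$ by the triangle inequality and the spatial estimate just proved, so it suffices to bound $|A(U^m_i)-A(U^n_i)|$. The standard trick here (following \cite{Evje/Karlsen}) exploits the parabolic scaling: one does \emph{not} expect Lipschitz-in-time regularity for $A(u)$, only H\"older-$\tfrac12$, which is precisely what the degenerate diffusion structure produces. I would write $A(U^m_i)-A(U^n_i)$ as a telescoping sum and relate the time increments to the discrete spatial Laplacian of $A$ through the scheme \eqref{scheme}, then use a duality/interpolation argument: pair the time difference against a suitable discrete test function supported on $\mathcal{O}(\sqrt{|m-n|\Delta t}/\Delta x)$ grid points, and balance the contributions of the convection, diffusion, and nonlocal terms against the uniform $L^1$-Lipschitz-in-time estimate from Lemma~\ref{Liptime} together with the $BV$ and $L^\infty$ bounds. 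The $\sqrt{|m-n|\Delta t}$ emerges from optimizing the width of this test-function window, exactly as in the local ($b=0$) case.

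The main obstacle I anticipate is handling the nonlocal term $b\Levy\langle U^n\rangle_i$ uniformly, since this is the genuinely new feature compared to \cite{Evje/Karlsen}. Specifically, in the temporal estimate one must show that the fractional-diffusion contribution does not destroy the H\"older-$\tfrac12$ scaling; this requires controlling $\sum_j G^i_j(\,\cdot\,)$ in the duality pairing using the decay and summability properties of the weights $G^i_j$ collected in Lemma~\ref{matG}, rather than treating $\Levy$ as a classical second-order operator. The key point is that the estimates \eqref{first} and \eqref{second} of Lemma~\ref{lemmaAP1} have already absorbed the nonlocal tail $\sum_{k=-\infty}^i\sum_j G^k_jU^n_j$ into the quantity whose $L^\infty$ and $BV$ norms are propagated, so the nonlocal term is effectively bundled with the discrete flux; the remaining work is to verify that the induced modulus of continuity is consistent with the parabolic scaling $|i-j|\Delta x + \sqrt{|m-n|\Delta t}$. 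I expect the spatial estimate to be essentially immediate from Lemma~\ref{lemmaAP1}, with the temporal half-order estimate requiring the more delicate balancing argument.
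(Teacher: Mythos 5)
Your proposal is correct and takes essentially the same approach as the paper: the spatial bound via a uniform $L^\infty$ control of $D_{+}A(U^n_i)$ extracted from \eqref{first}, the boundedness of $\hat f$, and the nonlocal partial sums via \eqref{crucial}; and the temporal bound via a duality pairing against test functions whose window width is optimized to produce $\sqrt{|m-n|\Delta t}$, with the nonlocal term controlled through Lemma \ref{matG} and \eqref{crucial}. The paper implements your duality step concretely through the quantity $V_i^n$ of Lemma \ref{lemmaAP1} and its evolution \eqref{888}, deferring the test-function optimization to \cite{Evje/Karlsen}, so your outline matches the paper's proof.
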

\begin{proof}
Let us write
\begin{align*}
\begin{split}
\left|A(U^m_i)-A(U^n_j)\right|\leq\left|A(U^m_i)-A(U^m_j)\right| +\left|A(U^m_j)-A(U^n_j)\right|=I_1+I_2.
\end{split}
\end{align*}
We first estimate the term $I_1$, then the term $I_2$.

\emph{Estimate of $I_1$.} Using \eqref{first}, \eqref{crucial}, Lemma \ref{stab} ii), and the fact that $f$ is Lipschitz continuous,
\begin{align*}
\begin{split}
\left\|D_{+}A(U^m_i)\right\|_{L^{\infty}(\Z)}\leq\ & \bigg\|\hat{f}(U_i^0,U_{i+1}^0)-D_{+}A(U^0_i)
-\sum_{k=-\infty}^i\sum_{j\in\Z}G^k_jU_j^0\bigg\|_{L^{\infty}(\Z)}\\
&+\bigg\|\hat{f}(U_i^m,U_{i+1}^m)\bigg\|_{L^{\infty}(\Z)} +\bigg\|\sum_{k=-\infty}^i\sum_{j\in\Z} G^k_jU_j^m\bigg\|_{L^{\infty}(\Z)}=\mathcal{O}(1).
\end{split}
\end{align*}
Hence $I_1=\mathcal{O}(1)|i-j|\Delta x$.

\emph{Estimate of $I_2$.} Take a test function $\phi\in C^1_c(\R)$, and let $\phi_i=\phi(i\Delta x)$. Let us assume $m>n$ (the case $m<n$ is analogous). Using
\eqref{888} we find that
\begin{align*}
\begin{split}
&\left|\Delta x\sum_{i\in\Z}\phi_i\left(V_i^m-V_i^n\right)\right|
\\ & =\Delta x\left|\sum_{l=n}^{m-1}\sum_{i\in\Z}\phi_i\left(V^{l+1}_i-V^l_i\right)\right|\\
&=\Delta x\sum_{l=n}^{m-1}\sum_{i\in\Z}\phi_i
\Big|(\hat{f}_{1,i}^nD_-V^l_i+\hat{f}_{2,i}^nD_-V^l_{i+1})+D_+(a_i^nD_-V_i^l)\Big|\\
&\qquad\qquad +\Delta t\left|\sum_{l=n}^{m-1} \sum_{i\in\Z}\phi_i\sum_{j\in\Z}G^i_jV_j^{l}\right|=C_1+C_2.
\end{split}
\end{align*}
We use summation by parts to move $D_+$ onto $\phi_i$ and the fact that $\hat{f}_{1,i}^n$ $\hat{f}_{2,i}^n$ $a_i^n$ and $|V^l|_{BV(\Z)}$ are uniformly bounded
to arrive that
\begin{align*}
\begin{split}
C_1=\mathcal{O}(1)\Delta t(m-n)\left(\|\phi\|_{L^\infty(\R)} +\|\phi'\|_{L^\infty(\R)}\right).
\end{split}
\end{align*}
For more details, see \cite{Evje/Karlsen}. Then by \eqref{crucial}, $\sum_{i\in\Z}\sum_{j\in\Z}|G^i_jV_j^{l}|=\mathcal{O}(1)$, and hence
\begin{align}\label{kkk}
\begin{split}
C_2\leq\Delta t\|\phi\|_{L^\infty(\R)}\sum_{l=n}^{m-1} \sum_{i\in\Z}\sum_{j\in\Z}|G^i_jV_j^{l}| =\mathcal{O}(1)\Delta t(m-n)\|\phi\|_{L^\infty(\R)}.
\end{split}
\end{align}
Therefore,
\begin{align}\label{n1}
\begin{split}
\left|\Delta x\sum_{i\in\Z}\phi_i\left(V_i^m-V_i^n\right)\right| =\mathcal{O}(1)\Delta t(m-n)\Big[\|\phi\|_{L^\infty(\R)}+\|\phi'\|_{L^\infty(\R)}\Big].
\end{split}
\end{align}
The above inequality is exactly expression (40) in \cite{Evje/Karlsen}. From now on the proof continues as in \cite{Evje/Karlsen}. Loosely speaking we take an
appropriate sequence of test functions $\phi_\eps\in C^1_c(\R)$ to deduce from \eqref{n1} that
\begin{align*}
\begin{split}
\Delta x\sum_{i\in\Z}|V_i^{m}-V_i^{n}|=\mathcal{O}(1)\sqrt{(m-n)\Delta t}.
\end{split}
\end{align*}
By \eqref{a10}, Lemma \ref{Liptime}, and inequality \eqref{kkk} we also find that
 $$\Delta
x\sum_{i\in\Z}|V_i^{m}-V_i^{n}|=\mathcal{O}(1)(m-n)\Delta t+\Delta x\sum_{i\in\Z}|D_+A(U_i^{m})-D_+A(U_i^{n})|,$$ and hence $\Delta
x\sum_{i\in\Z}|D_+A(U_j^{m})-D_+A(U_j^{n})|=\mathcal{O}(1)\sqrt{(m-n)\Delta
  t}$. We conclude by noting that
\begin{align*}
\begin{split}
I_2=|A(U^m_j)-A(U^n_j)|&=\Delta x\left|\sum_{i=-\infty}^jD_+A(U_i^m)
-\sum_{i=-\infty}^jD_+A(U_i^n)\right|\\
&\leq\Delta x\sum_{i\in\Z}|D_+A(U_i^{m})-D_+A(U_i^{n})| =\mathcal{O}(1)\sqrt{(m-n)\Delta t}.
\end{split}
\end{align*}
\end{proof}

Next we show that the numerical method \eqref{scheme} satisfies a cell entropy inequality, which is  a discrete version of \ref{def:entr}.

\begin{lemma}
Let $k\in\R$ and $\eta_i^n=|U^n_i-k|$. Then
\begin{align}\label{entropcell}
\begin{split}
\eta_i^{n+1}-\eta_i^{n}+\Delta tD_{-}Q_i^n-\Delta tD_{-}D_{+}|A(U^n_i)-A(k)| &\leq\Delta t\eta'_k(U^{n+1}_i)\Levy\langle U^n\rangle_i,
\end{split}
\end{align}
where $Q_i^n=\hat{f}(U_i^n\vee k,U_{i+1}^n\vee k) -\hat{f}(U_i^n\wedge k,U_{i+1}^n\wedge k)$.
\end{lemma}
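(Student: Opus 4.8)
The plan is to follow the classical Crandall--Majda strategy for monotone schemes, modified so that the nonlocal term is pushed onto the right-hand side by convexity. I would first write the update \eqref{scheme} as
\[
U_i^{n+1}=G_i(U^n)+\Delta t\, b\,\Levy\langle U^n\rangle_i,
\]
where $G_i(U^n)=U_i^n-\Delta t D_-[\hat{f}(U_i^n,U_{i+1}^n)-D_+A(U_i^n)]$ is the purely \emph{local} part of the update. The proof then rests on two independent ingredients: a discrete \Kruz entropy inequality for the local map $G$, and an elementary convexity estimate that absorbs the nonlocal increment $U_i^{n+1}-G_i(U^n)=\Delta t\, b\,\Levy\langle U^n\rangle_i$.

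First I would dispose of the nonlocal term. Since $\eta_k(s)=|s-k|$ is convex with subgradient $\eta'_k(s)=\sgn(s-k)$, the subgradient inequality evaluated at $U_i^{n+1}$ with comparison point $G_i(U^n)$ gives
\[
\eta_k(U_i^{n+1})-\eta_k(G_i(U^n))\le \eta'_k(U_i^{n+1})\big(U_i^{n+1}-G_i(U^n)\big)=\Delta t\, b\,\eta'_k(U_i^{n+1})\,\Levy\langle U^n\rangle_i,
\]
which reproduces the nonlocal term on the right-hand side of \eqref{entropcell} (carrying the constant $b$ from \eqref{scheme}). Notice that this step uses nothing about the sign structure of the weights $G^i_j$: the nonlocal part survives untouched precisely because it is handled by convexity rather than by monotonicity.

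It then remains to bound $\eta_k(G_i(U^n))-\eta_i^n$ by the local entropy fluxes. For this I would run the standard monotone-scheme argument on $G$. Under the CFL condition \eqref{cfl} (in fact under its first three terms, since the nonlocal part has been separated) the map $G$ is nondecreasing in each of $U_{i-1}^n,U_i^n,U_{i+1}^n$ — this is exactly the sign computation already performed in property (\textbf{P}.2), restricted to the local terms — and it is consistent, in that applying $G_i$ to the constant state $k$ returns $k$ (the differences $D_-\hat{f}(k,k)$ and $D_-D_+A(k)$ vanish). Writing $U^n\vee k$ and $U^n\wedge k$ for the grid functions with entries $U_j^n\vee k$ and $U_j^n\wedge k$, monotonicity together with $G_i(k,\dots,k)=k$ yields $G_i(U^n\vee k)\ge G_i(U^n)\vee k$ and $G_i(U^n\wedge k)\le G_i(U^n)\wedge k$, whence
\[
|G_i(U^n)-k|\le G_i(U^n\vee k)-G_i(U^n\wedge k).
\]
Expanding the right-hand side explicitly, and using that $A$ is nondecreasing so that $|A(u)-A(k)|=A(u\vee k)-A(u\wedge k)$, identifies the jump $U_i^n\vee k-U_i^n\wedge k=\eta_i^n$, the numerical entropy flux $Q_i^n$, and the diffusion term $D_-D_+|A(U_i^n)-A(k)|$, giving
\[
\eta_k(G_i(U^n))-\eta_i^n+\Delta t D_-Q_i^n-\Delta t D_-D_+|A(U_i^n)-A(k)|\le0.
\]
Adding this to the convexity estimate yields \eqref{entropcell}.

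The main obstacle is the bookkeeping of the monotone-scheme step: one must verify carefully that $G$ is monotone under \eqref{cfl}, and then track the $\vee/\wedge$ manipulations so that the explicit expansion of $G_i(U^n\vee k)-G_i(U^n\wedge k)$ produces precisely $Q_i^n$ and the entropy diffusion flux $D_-D_+|A(U_i^n)-A(k)|$, rather than some larger majorant. The nonlocal part, by contrast, is disposed of cleanly in a single line by convexity, which is exactly why it appears on the right-hand side weighted by $\eta'_k(U_i^{n+1})$ rather than being converted into a good sign.
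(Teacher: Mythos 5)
Your proof is correct, and it rests on the same two pillars as the paper's: monotonicity under the CFL condition \eqref{cfl} and the Kru\v{z}kov $\vee/\wedge$ device, with the nonlocal term frozen at time level $n$. The difference is organizational. The paper does not split the scheme into local and nonlocal parts: it invokes monotonicity to state two one-sided inequalities, one comparing $U_i^{n+1}\vee k$ with the local update of $U^n\vee k$ and carrying the nonlocal source weighted by $\mathbf{1}_{(k,+\infty)}(U_i^{n+1})$, and one comparing $U_i^{n+1}\wedge k$ with the update of $U^n\wedge k$ and weight $\mathbf{1}_{(-\infty,k)}(U_i^{n+1})$, and then subtracts; the two indicators combine into $\eta_k'(U_i^{n+1})$. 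Your version isolates the local map $G_i$, obtains the weight $\eta_k'(U_i^{n+1})$ in one stroke from the subgradient inequality for $\eta_k$, and runs the Crandall--Tartar argument ($G_i(U^n\vee k)\geq G_i(U^n)\vee k$, and its $\wedge$ counterpart) on $G$ alone. The mathematical content is identical: the case analysis hidden behind the paper's indicator functions is precisely your convexity step, and both arguments in fact use only the monotonicity of the local terms, since the nonlocal term is evaluated at $U^n$ rather than at $U^n\vee k$ or $U^n\wedge k$. What your decomposition buys is transparency — it makes explicit where each structural property (local monotonicity under \eqref{cfl}, consistency $G_i(k,\ldots,k)=k$, convexity of $\eta_k$) enters — and it also correctly carries the factor $b$ from \eqref{scheme}, which the statement of \eqref{entropcell} silently drops (implicitly normalizing $b=1$).
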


\begin{proof}
Let us introduce the notation $a\wedge b=\min\{a,b\}$ and $a\vee b=\max\{a,b\}$. Note that $\eta_i^n=(U_i^{n}\vee k)-(U_i^{n}\wedge k)$. Since the numerical
method \eqref{scheme} is monotone,
\begin{align*}
\begin{split}
&\frac{(U_i^{n+1}\vee k)-(U_i^{n}\vee k)}{\Delta t} +\frac{\hat{f}(U_i^n\vee k,U_{i+1}^n\vee k)-\hat{f}(U_{i-1}^n\vee k,U_{i}^n\vee k)}{\Delta
x}\\&-\frac{A(U^n_{i+1}\vee k)-A(U^n_{i}\vee k)}{\Delta x^2} +\frac{A(U^n_{i}\vee k)-A(U^n_{i-1}\vee k)}{\Delta x^2}\\
&\qquad\qquad\qquad\qquad\qquad\qquad\qquad\qquad\qquad\leq\Delta t\mathbf{1}_{(k,+\infty)}(U_{i}^{n+1})\Levy\langle U^{n}\rangle_{i}
\end{split}
\end{align*}
and
\begin{align*}
\begin{split} &\frac{(U_i^{n+1}\wedge k)-(U_i^{n}\wedge k)}{\Delta t} +\frac{\hat{f}(U_i^n\wedge k,U_{i+1}^n\wedge k) -\hat{f}(U_{i-1}^n\wedge
k,U_{i}^n\wedge k)}{\Delta
x}\\&-\frac{A(U^n_{i+1}\wedge k)-A(U^n_{i}\wedge k)}{\Delta x^2} +\frac{A(U^n_{i}\wedge k)-A(U^n_{i-1}\wedge k)}{\Delta x^2}\\
&\qquad\qquad\qquad\qquad\qquad\qquad\qquad\qquad\qquad\geq\Delta t\mathbf{1}_{(-\infty,k)}(U_{i}^{n+1})\Levy\langle U^{n}\rangle_{i}.
\end{split}
\end{align*}
To conclude, subtract the above inequalities.
\end{proof}

\subsection{Convergence of the DDG method}
We are now in position to prove convergence of the fully explicit numerical method \eqref{scheme} to a $BV$ entropy solution of \eqref{1}. Let us introduce
$\mathcal{B}$ (cf.~\cite{Evje/Karlsen}), the space of all functions $z:\R\rightarrow\R$ such that
\begin{align*}
\begin{split}
\left|f(z)-\partial_xA(z)-\int^x\Levy[z]\right|_{BV(\R)}<\infty.
\end{split}
\end{align*}
In the following theorem we choose the initial datum to be in $L^1(\R)\cap BV(\R)\cap \mathcal{B}$, which is done to make sense to the right-hand side of
\eqref{second}. Note that whenever $z\in L^1(\R)\cap BV(\R)$, $\Levy[z]\in L^1(\R)$ by Lemma \ref{lemmanonloc}, and hence
$$
\left|\int^x\Levy[z]\right|_{BV(\R)} =\left\|\frac{d}{\dx}\int^x\Levy[z]\right\|_{L^1(\R)} =\|\Levy[z]\|_{L^1(\R)}<\infty.$$

\begin{theorem}[Convergence for DDG]\label{th:existence}
Suppose $u_0\in L^1(\R)\cap BV(\R)\cap \mathcal{B}$, and let $\hat{u}_{\Delta x}$ be the interpolant \eqref{interp} of the solution of the explicit DGG scheme
\eqref{scheme}. Then there is a subsequence of $\{\hat{u}_{\Delta x}\}$ and a function $u\in L^1(Q_T)\cap BV(Q_T)$ such that (a) $\hat{u}_{\Delta x}\to u$ in
$L_{loc}^1(Q_T)$ as $\Delta x\ra0$; (b) $u$ is a $BV$ entropy solution of \eqref{1}.
\end{theorem}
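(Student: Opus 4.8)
The plan is to follow the three-step compactness program of \cite{Evje/Karlsen}, feeding in the discrete estimates established above and supplying the additional arguments needed for the nonlocal term. First I would establish compactness and identify the limit. By Lemma \ref{stab} the grid functions $U^n$ are uniformly bounded in $L^1(\Z)$, $L^\infty(\Z)$ and $BV(\Z)$, uniformly in $n$ and $\Delta x$, while Lemma \ref{Liptime} provides uniform $L^1$-Lipschitz continuity in time. Transferring these bounds to the bilinear interpolant \eqref{interp}, the family $\{\hat{u}_{\Delta x}\}$ is bounded in $L^\infty(Q_T)$ and in $BV(Q_T)$. Helly's theorem (equivalently, the compact embedding of $BV$ into $L^1_{loc}$) then yields a subsequence and a limit $u\in L^1(Q_T)\cap BV(Q_T)$ with $\hat{u}_{\Delta x}\to u$ in $L^1_{loc}(Q_T)$ and a.e.; this proves part (a) and condition \ref{def:bv}.

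Next I would upgrade the regularity of $A$. The discrete parabolic H\"older estimate of Lemma \ref{lemmalip} shows that $A(\hat{u}_{\Delta x})$ is equibounded and equicontinuous with modulus $\mathcal{O}(1)[|x-y|+\sqrt{|t-\tau|}]$, which is exactly why the bilinear rather than the piecewise constant interpolant was chosen. An Ascoli-Arzel\`{a} argument then gives, along a further subsequence, uniform convergence on compact sets of $A(\hat{u}_{\Delta x})$ to a limit which, by the a.e.~convergence from Step 1 and the continuity of $A$, must equal $A(u)$. The limit inherits the modulus of continuity, so $A(u)\in C^{1,\frac{1}{2}}(Q_T)$, establishing \ref{def:spacetimereg}.

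The entropy inequality \ref{def:entr} would then be derived by passing to the limit in the cell entropy inequality \eqref{entropcell}. Fixing $k\in\R$ and a non-negative $\varphi\in C^\infty_c(\R\times[0,T))$, I would multiply \eqref{entropcell} by $\Delta x\,\varphi(x_i,t_n)$, sum over $i\in\Z$ and $n$, and use summation by parts in space and time to transfer the discrete operators $D_-$, $D_-D_+$ and the time difference onto $\varphi$. Using the consistency of $\hat f$ (so that $Q_i^n$ is a consistent numerical entropy flux for $q_k$), the smoothness of $\varphi$, and the strong convergence $\hat{u}_{\Delta x}\to u$, the local terms converge to $\int_{Q_T}\eta_k(u)\varphi_t+q_k(u)\varphi_x+r_k(u)\varphi_{xx}$ exactly as in \cite{Evje/Karlsen}; here one uses that $|A(s)-A(k)|=r_k(s)$, since $a\geq0$ makes $A$ nondecreasing. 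The time-boundary contribution produces the initial term $\int_\R\eta_k(u_0)\varphi(0,x)$ through the initial condition in \eqref{scheme}.

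The main obstacle is the passage to the limit in the nonlocal term $\Delta t\,\eta'_k(U_i^{n+1})\Levy\langle U^n\rangle_i$. Here one must show that the discrete operator $\Levy\langle U^n\rangle$, built from the weights $G^i_j$ of Lemma \ref{matG}, converges against test functions to $\Levy[u]$, and then combine this with the a.e.~convergence of $\eta'_k(\hat{u}_{\Delta x})$ despite the discontinuity of $\eta'_k$. The uniform bound \eqref{crucial} supplies the equi-integrability required to justify this limit, and the argument is carried out following the nonlocal estimates of \cite{Cifani/ERJ/KHK}, yielding the term $\int_{Q_T}\eta'_k(u)\Levy[u]\varphi$. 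Collecting all the limits gives \ref{def:entr}, so $u$ is a $BV$ entropy solution and part (b) follows.
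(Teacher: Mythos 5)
Your Steps 1 and 2 coincide with the paper's proof: the uniform $L^\infty(Q_T)$ and $BV(Q_T)$ bounds from Lemmas \ref{stab} and \ref{Liptime} transferred to the interpolant \eqref{interp} give $L^1_{loc}$ compactness and \ref{def:bv}, and the Ascoli--Arzel\`a argument built on Lemma \ref{lemmalip} gives \ref{def:spacetimereg} exactly as in the paper. Your outline of Step 3 (multiply \eqref{entropcell} by the test function, sum by parts, pass to the limit in the local terms as in \cite{Evje/Karlsen}) also matches. But the proposal stops precisely where the paper has to do real work: the limit of the nonlocal term.

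Two ideas are missing there, and the mechanism you do propose would fail. First, you claim the discrete operator $\Levy\langle U^n\rangle$ converges \emph{weakly} (``against test functions'') to $\Levy[u]$, justified by ``equi-integrability'' from \eqref{crucial}. But \eqref{crucial} is only a uniform $L^1$ bound; it implies neither equi-integrability nor weak $L^1$ compactness, and in any case a weak limit cannot in general be paired with the second factor $\eta'_k(\cdot)$, which is discontinuous and converges only almost everywhere. The paper instead proves \emph{strong} convergence of the nonlocal term: since $\bar u_{\Delta x}\to u$ in $L^1_{loc}$ with uniformly bounded $BV$ seminorm, the interpolation inequality \eqref{kkk1} of Lemma \ref{lemmanonloc} applied to the \emph{difference} gives, on the support of $\varphi$,
\begin{align*}
\int_{[-R,R]\times[0,T]}\big|\Levy[\bar u_{\Delta x}-u]\big|
\le c_\lambda C\int_0^T\|\bar u_{\Delta x}-u\|_{L^1(-R,R)}^{1-\lambda}\,
|\bar u_{\Delta x}-u|_{BV(-R,R)}^{\lambda}\longrightarrow 0,
\end{align*}
so $\Levy[\bar u_{\Delta x}]\to\Levy[u]$ a.e.\ along a subsequence, and dominated convergence yields \eqref{hhh}. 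Second, you never resolve the discontinuity of $\eta'_k$: the paper's point is that for a.e.\ $k\in\R$ the level set $\{(x,t):u(x,t)=k\}$ has measure zero, whence $\eta'_k(\bar u_{\Delta x})\to\eta'_k(u)$ a.e.; this yields \ref{def:entr} only for a.e.\ $k$, and the extension to \emph{all} $k$ --- which Definition \ref{def:entropy} demands --- is a separate step carried out along the lines of \cite[Lemmas 4.3 and 4.4]{Karlsen:2003lz}. Your proposal contains neither the level-set observation nor the all-$k$ extension. Two further omissions, minor but necessary: the index mismatch in \eqref{entropcell} between $\eta'_k(U_i^{n+1})$ and $\Levy\langle U^{n}\rangle_i$ must be repaired (the paper splits off $\Levy\langle U^n-U^{n+1}\rangle_i$ and kills it with the $L^1$-Lipschitz-in-time bound of Lemma \ref{Liptime}), and the limit passage should be run on the piecewise constant interpolant $\bar u_{\Delta x}$ rather than the bilinear $\hat u_{\Delta x}$ precisely because of $\eta'_k$; the switch is harmless since $\|\bar u_{\Delta x}(\cdot,t)-\hat u_{\Delta x}(\cdot,t)\|_{L^1}\le c|U^n|_{BV(\Z)}\Delta x$.
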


\begin{corollary}[Existence]
If $u_0\in L^1(\R)\cap BV(\R)\cap \mathcal{B}$, then there exists a BV entropy solution of \eqref{1}.
\end{corollary}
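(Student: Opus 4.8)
The plan is to run the classical compactness/Lax--Wendroff programme for monotone difference schemes, adapted to the nonlocal setting, verifying the three conditions \ref{def:bv}--\ref{def:entr} of Definition \ref{def:entropy} in turn. First I would establish compactness. By Lemma \ref{stab} the grid functions $U^n$ are uniformly bounded in $L^1(\Z)$, $L^\infty(\Z)$ and $BV(\Z)$, while Lemma \ref{Liptime} provides a uniform $L^1$-Lipschitz bound in time. Transferred to the interpolant \eqref{interp}, these bounds show that $\{\hat u_{\Delta x}\}$ is bounded in $L^\infty(Q_T)$ and uniformly in $BV(Q_T)$ in both space and time. A standard Helly/Kolmogorov compactness argument then yields a subsequence and a limit $u\in L^1(Q_T)\cap BV(Q_T)$ with $\hat u_{\Delta x}\to u$ in $L^1_{loc}(Q_T)$, which is assertion (a) together with condition \ref{def:bv}; the uniform bounds pass to $u$ by lower semicontinuity.

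Next I would verify the space-time Hölder regularity \ref{def:spacetimereg}, which is exactly where the bilinear interpolation \eqref{interp} is needed. Lemma \ref{lemmalip} supplies the discrete estimate $|A(U^m_i)-A(U^n_j)|=\mathcal O(1)[\,|i-j|\Delta x+\sqrt{|m-n|\Delta t}\,]$, which transfers to a uniform $C^{1,\frac12}(Q_T)$-type modulus of continuity for the continuous functions $A(\hat u_{\Delta x})$. By Ascoli--Arzelà a (further) subsequence of $A(\hat u_{\Delta x})$ converges locally uniformly; since $\hat u_{\Delta x}\to u$ in $L^1_{loc}$ and $A$ is continuous, the uniform limit must coincide with $A(u)$, and the Hölder bound is inherited in the limit, giving \ref{def:spacetimereg}.

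The core of the argument is the entropy inequality \ref{def:entr}. Starting from the cell entropy inequality \eqref{entropcell}, I would multiply by a nonnegative test value $\varphi_i^n=\varphi(x_i,t_n)$, sum over $i\in\Z$ and $n$, and perform summation by parts in both space and time. The resummed time sum produces the initial term $\int_\R\eta_k(u_0)\varphi(0,x)\dx$. The local contributions (the $\eta_k$, $q_k$, and $r_k$ terms) converge to their continuous counterparts by the strong $L^1_{loc}$ convergence of $\hat u_{\Delta x}$, the consistency and monotonicity of $\hat f$, and the regularity of $A(u)$ just established, which is precisely what is needed to pass the discrete second difference $\Delta t\,D_-D_+|A(U^n_i)-A(k)|=\Delta t\,D_-D_+ r_k(U^n_i)$ to the term $r_k(u)\varphi_{xx}$; this is as in \cite{Evje/Karlsen}. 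What remains is the nonlocal term $\Delta t\,\eta'_k(U^{n+1}_i)\Levy\langle U^n\rangle_i$.

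The nonlocal limit is the main obstacle. Here I would use that, by \eqref{crucial} together with the uniform $BV$ and $L^1$ bounds of Lemma \ref{stab}, the quantities $\Levy\langle U^n\rangle$ are bounded in $L^1$ uniformly in $\Delta x$, so no mass escapes; the weight identities of Lemma \ref{matG} identify $\Levy\langle U^n\rangle_i$ with $\frac{1}{\Delta x}\int_{I_i}\Levy[\bar{U}^n]$, and Lemma \ref{lemmanonloc} lets me pass $\Levy[\bar{U}^n]\to\Levy[u]$ against test functions for the piecewise constant space-time interpolant, which shares the limit $u$. The delicate point is the product with the discontinuous factor $\eta'_k(U^{n+1}_i)=\sgn(U^{n+1}_i-k)$: strong $L^1_{loc}$ convergence yields $\eta'_k(\hat u_{\Delta x})\to\eta'_k(u)$ a.e.\ off the level set $\{u=k\}$, and combining this a.e.\ convergence of the bounded entropy factor with the uniformly $L^1$-controlled nonlocal term (via \eqref{crucial}) produces $\eta'_k(u)\Levy[u]\varphi$ in the limit. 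Assembling the limits of all four groups of terms gives the inequality \ref{def:entr}, completing (b).
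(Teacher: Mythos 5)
Your proposal is correct and follows the paper's proof of Theorem \ref{th:existence} essentially step for step: the same compactness bounds (Lemmas \ref{stab} and \ref{Liptime}), the same Ascoli--Arzel\`a argument giving $A(u)\in C^{1,\frac{1}{2}}(Q_T)$ via Lemma \ref{lemmalip}, and the same Lax--Wendroff passage to the limit in the cell entropy inequality \eqref{entropcell}, with the nonlocal term handled exactly as the paper handles \eqref{hhh} (piecewise constant interpolant, the bound \eqref{crucial}, Lemma \ref{lemmanonloc}, a.e.\ convergence of $\eta_k'$). The only details you pass over are that this argument yields \ref{def:entr} only for a.e.\ $k\in\R$ (those with $|\{u=k\}|=0$), which the paper closes by invoking \cite{Karlsen:2003lz} to reach all $k$, and that the mismatched time indices in $\eta_k'(U^{n+1}_i)\Levy\langle U^n\rangle_i$ require the short splitting argument based on Lemma \ref{Liptime} before the limit can be taken.
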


\begin{proof}[Proof of Theorem \ref{th:existence}]
We will prove strong $L_{loc}^1$ compactness, and hence we need the following estimates uniformly in $\Delta x>0$:
\begin{itemize}
\item[\emph{i)}]$\|\hat{u}_{\Delta x}\|_{L^\infty(Q_T)}\leq C,$
\item[\emph{ii)}]$\|\hat{u}_{\Delta x}\|_{BV(Q_T)}\leq C.$
\end{itemize}
Estimate $\emph{i)}$ is a consequence of Lemma \ref{stab} and \eqref{interp}, while estimate $\emph{ii)}$ comes from the following computations
(cf.~\cite{Evje/Karlsen} for more details). Using the interpolation \eqref{interp}, we find that
\begin{align*}
\begin{split}
\int_{Q_T}|\hat{u}_x| &\leq\frac{\Delta t}{2}\sum_{n=0}^{N-1}\sum_{i\in\Z}\left|U^n_{i+1} -U^n_{i}\right|+\frac{\Delta t}{2}\sum_{n=0}^{N-1}
\sum_{i\in\Z}\left|U^{n+1}_{i+1}-U^{n+1}_{i}\right|\\
&\leq T|U^0|_{BV(\Z)}.
\end{split}
\end{align*}
Note that Lemma \ref{stab} has been used in the second inequality. Similarly,
\begin{align*}
\begin{split}
\int_{Q_T}|\hat{u}_t| &\leq\frac{\Delta x}{2}\sum_{n=0}^{N-1} \sum_{i\in\Z}\left|U^{n+1}_{i}-U^n_{i}\right|+\frac{\Delta x}{2}
\sum_{n=0}^{N-1}\sum_{i\in\Z}\left|U^{n+1}_{i+1}-U^{n}_{i+1}\right|\\
&\leq T\Big|\hat{f}(U_i^0,U_{i+1}^0)-D_{+}A(U^0_i) -\sum_{h=-\infty}^{i}\sum_{j\in\Z}G^h_jU^0_j\Big|_{BV(\Z)},
\end{split}
\end{align*}
where Lemma \ref{Liptime} has been used in the second inequality. Hence, there exists a sequence $\{\hat{u}_{\Delta x_i}\}_{i\in\N}$ which converges in
$L^1_{loc}(Q_T)$ to a limit
$$
u\in L^1(Q_T)\cap BV(Q_T).
$$

Next we check that the limit $u$ satisfies \ref{def:spacetimereg}. We define $w_{\Delta x}=A(\hat{u}_{\Delta x})$. Note that $A(\hat{u}_{\Delta x})\rightarrow
A(u)$ a.e. since $\hat{u}_{\Delta x}\rightarrow u$ a.e. (up to a subsequence) and $A$ is continuous.
 Now choose $(x,t),(y,\tau),(j,n),(i,m)$ such that
$(x,t)\in R_j^n$ and $(y,\tau)\in R_i^m$ for $R^n_i=[x_i,x_{i+1})\times[t_n,t_{n+1})$. Then,
\begin{align*}
\begin{split}
|w_{\Delta x}(y,\tau)-w_{\Delta x}(x,t)|&\leq|w_{\Delta x}(y,\tau)-w_{\Delta x}(i\Delta x,m\Delta t)|\\
&\ +|w_{\Delta x}(i\Delta x,m\Delta t)-w_{\Delta x}(j\Delta x,n\Delta t)|\\
&\ +|w_{\Delta x}(j\Delta x,n\Delta t)-w_{\Delta x}(x,t)|\\
&=I_1+I_2+I_3.
\end{split}
\end{align*}
Note that by Lemma \ref{lemmalip}, $I_2=\mathcal{O}(1)(|i-j|\Delta x+\sqrt{|m-n|\Delta t})$, while by Lemma \ref{lemmalip} again, \eqref{interp}, and $A'=a\in
L^\infty$, $I_1+I_3=\mathcal{O}(1)(\Delta x+\sqrt{\Delta t})$. Thus
\begin{align*}
|w_{\Delta x}(y,\tau)-w_{\Delta x}(x,t)|=\mathcal{O}(1)\left[|y-x|+\sqrt{|\tau-t|}+\Delta x+\sqrt{\Delta t}\right].
\end{align*}
We also have that $w_{\Delta x}=A(\hat{u}_{\Delta x})$ is uniformly bounded since $A$ is Lipschitz and  $\hat u_{\Delta x}$ is uniformly bounded. By
essentially repeating the proof of the Ascoli-Arzel\`{a} compactness theorem, we can now deduce the existence of a subsequence $\{w_{\Delta x}\}$ converging
locally uniformly towards the limit $A(u)$. By the estimates on $w_{\Delta x}$, it then follows that
\begin{align}\label{kkjj}
A(u)\in C^{1,\frac{1}{2}}(Q_T).
\end{align}

Finally, let us check that the limit $u$ satisfies \ref{def:entr} in Definition \ref{def:entropy}. Here we need to introduce a piecewise constant inteporlation
of our data points $U^n_i$. We call
\begin{align*}
\bar{u}_{\Delta x}(x,t)=U_{i}^{n}\text{ for all $(x,t)\in[x_i,x_{i+1})\times[t_n,t_{n+1})$}.
\end{align*}
We do this since the discontinuous sign function $\eta_k'$ makes it difficult to work with the bilinear interpolant $\hat u_{\Delta x}$ in what follows. The
need for the piecewise linear interpolation was dictated by the condition \ref{def:spacetimereg}: continuity of the functions $A(\hat{u}_{\Delta x})$ were
needed to prove H\"{o}lder space-time regularity for the limit $A(u)$ (cf.~the proof of \eqref{kkjj}). To verify that the limit $u$ also satisfies
\ref{def:entr} the piecewise constant interpolation $\bar{u}_{\Delta x}$ suffices since, as we already have strong convergence for the piecewise linear
interpolation, strong convergence toward the same limit $u$ for the piecewise constant interpolation is ensured thanks to the fact that
$$\|\bar{u}_{\Delta x}(\cdot,t)- \hat{u}_{\Delta x}(\cdot,t)\|_{L^1(Q_T)}\leq c|U^n|_{BV(\Z)}\Delta x.$$ We now take a positive test function $\varphi\in
C_c^{\infty}(\R\times[0,T))$, and let $\varphi_i^n=\varphi(x_i,t_n)$. We multiply both sides of \eqref{entropcell} by $\varphi_i^n$, and sum over all $(i,n)$.
Using summation by parts, we obtain
\begin{align}\label{ex}
\begin{split}
\Delta x\Delta t\sum_{n=0}^{N-1}&\sum_{i\in\mathbb{Z}}\eta_{i}^{n}\frac{\varphi_{i}^{n+1}-\varphi_{i}^{n}}{\Delta t}\\
&+\Delta x\Delta t\sum_{n=0}^{N-1}\sum_{i\in\mathbb{Z}}Q_{i}^{n}D_{+}\varphi_i^n\\
&+\Delta x\Delta t\sum_{n=0}^{N-1}\sum_{i\in\mathbb{Z}}|A(U^n_i)-A(k)|D_{-}(D_{+}\varphi_i^n)\\
&+\Delta x\Delta t\sum_{n=0}^{N-1}\sum_{i\in\mathbb{Z}}\eta'_k(U^{n+1}_i)\Levy\langle U^{n}\rangle_i\varphi_{i}^{n}\\
&+\Delta x\sum_{i\in\mathbb{Z}}\varphi_{i}^{0}\eta_{i}^{0}\geq0.
\end{split}
\end{align}
A standard argument shows that all the local terms in the above expression converge to the ones appearing in the entropy inequality \ref{def:entr}, see
e.g.~\cite{Holden/Risebro,Evje/Karlsen}. Let us look the term containing the nonlocal operator $\Levy\langle\cdot\rangle$. We can rewrite it as
\begin{align*}
\begin{split}
\int_{\Delta t}^{T+\Delta t}\int_{\mathbb R}\eta'_{k}(\bar{u}_{\Delta x}) \Levy[\bar{u}_{\Delta x}]\bar{\varphi}\dx\dt+R,
\end{split}
\end{align*}
where $R\stackrel{\Delta x\rightarrow0}{\longrightarrow}0$ and $\bar\varphi$ is the piecewise constant interpolant of $\varphi_i^n$. Indeed, let us write
$\eta_k'(U^{n+1}_i)\Levy\langle U^{n}\rangle_i=\eta_k'(U^{n+1}_i)\Levy\langle U^n-U^{n+1}\rangle_i +\eta_k'(U^{n+1}_i)\Levy\langle U^{n+1}\rangle_i$. Note that
\begin{align*}
\begin{split}
\Delta x\Delta t\sum_{n=0}^{N-1}\sum_{i\in\mathbb{Z}} |\Levy\langle U^n-U^{n+1}\rangle_i|\varphi_{i}^{n}\leq\Delta
t\|\bar{\varphi}\|_{L^\infty(Q_T)}\sum_{n=0}^{N-1}\int_{\R}|\Levy[\bar{U}^n(x)-\bar{U}^{n+1}(x)]|\dx,
\end{split}
\end{align*}
where the last quantity vanishes as $\Delta x\rightarrow0$ by  $L^1$-Lipschitz continuity in time (cf. Lemma \ref{Liptime}, and also Lemmas \ref{stab} and
\ref{lemmanonloc}). Next,
\begin{align*}
\begin{split}
&\sum_{n=0}^{N}\sum_{i\in\mathbb{Z}}\eta'_{k}(U^{n+1}_i)
\Levy\langle{U}^{n+1}\rangle_i\varphi_i^{n}\\
&=\sum_{n=0}^{N}\sum_{i\in\mathbb{Z}}\eta'_{k}(U^{n+1}_i) \Levy\langle{U}^{n+1}\rangle_i(\varphi_i^{n}-\varphi_i^{n+1})
+\sum_{n=0}^{N}\sum_{i\in\mathbb{Z}}\eta'_{k}(U^{n+1}_i) \Levy\langle{U}^{n+1}\rangle_i\varphi_i^{n+1},
\end{split}
\end{align*}
where the first term on the right-hand side vanishes as $\Delta x\rightarrow0$ since there exists a constant $c_{\varphi}>0$ such that
$|\varphi_i^{n}-\varphi_i^{n+1}|\leq c_{\varphi}\Delta x$ for all $(i,n)$. To conclude, we prove that up to a subsequence and for a.e.~$k\in\mathbb{R}$,
\begin{align}\label{hhh}
\begin{split}
\int_{\Delta t}^{T+\Delta t}\int_{\mathbb R}\eta'_{k}(\bar{u}_{\Delta x}) \Levy[\bar{u}_{\Delta x}]\bar{\varphi}\dx\dt\stackrel{\Delta x\rightarrow
0}{\longrightarrow}\int_{Q_T}\eta'_k(u)\Levy[u]\varphi \dx\dt.
\end{split}
\end{align}
This is a consequence of the dominated convergence theorem since the left hand side integrand converges pointwise a.e.~to the right hand side integrand.
Indeed, first note that $\bar{\varphi}\rightarrow\varphi$ pointwise on $Q_T$, while a.e.~up to a subsequence, $\bar{u}_{\Delta x}\rightarrow u$ on $Q_T$. We
also have $\eta_k'(\bar{u}_{\Delta   x})\rightarrow\eta'_k(u)$ a.e.~in $Q_T$ since for a.e.~$k\in\mathbb{R}$ the measure of $\{(x,t)\in Q_T:u(x,t)=k\}$ is zero
and $\eta'_k$ is continuous on $\mathbb{R}\backslash \{k\}$. Finally if the (compact) support of $\varphi$ is containd in $[-R,R]\times[0,T]$, $R>0$, then a
trivial extension of Lemma \ref{lemmanonloc} implies that
\begin{align*}
\begin{split}
\int_{[-R,R]\times[0,T]}|\Levy[\bar{u}_{\Delta x}-u]|\dx\dt\leq c_\lambda C\int_{0}^{T}\|\bar{u}_{\Delta x}-u\|_{L^1(-R,R)}^{1-\lambda} |\bar{u}_{\Delta
x}-u|_{BV(-R,R)}^{\lambda},
\end{split}
\end{align*}
where the last quantity vanishes as $\Delta x\rightarrow0$ since $\bar{u}_{\Delta x}\rightarrow u$ in $L^1_{loc}(Q_T)$. Then $\Levy[\bar{u}_{\Delta
  x}]\rightarrow \Levy[u]$ a.e.~in $[-R,R]\times(0,T)$ up to a
subsequence. Convergence for all $k\in\mathbb{R}$ can be proved along the lines of \cite[Lemmas 4.3 and 4.4]{Karlsen:2003lz}.
\end{proof}

\subsection{Remarks on the LDG method}\label{sec:remarksldg}
The derivation of the LDG method in the piecewise constant case is not as straightforward as the one for the DDG method. Indeed, the numerical fluxes
introduced in \eqref{Lflux} depend on the choice of the function $c_{12}$, and computations cannot be performed until this function has been defined. Our aim
now is to show that the LDG method reduces to a numerical method similar to \eqref{Ds3} for a suitable choice of the function $c_{12}$.

Let us for the time being ignore the nonlinear convection and
 fractional diffusion terms and focus on the problem
\begin{align*}
\left\{
\begin{array}{ll}
u_t-\partial_x\sqrt{a(u)}q=0,\\
q-\partial_xg(u)=0,\\
u(x,0)=u_{0}(x).
\end{array}
\right.
\end{align*}
The LDG method \eqref{systdisc} then takes the form
\begin{align}\label{systpwc}
\begin{cases}
\int_{I_{i}}\tilde{u}_t+\hat{h}_u(\tilde{\mathbf{w}}_{i+1})
-\hat{h}_u(\tilde{\mathbf{w}}_{i})=0,\\[0.2cm]
\int_{I_{i}}\tilde{q}+\hat{h}_q(\tilde{u}_{i+1})-\hat{h}_q(\tilde{u}_{i})=0,
\end{cases}
\end{align}
where $\tilde{u}(x,t)=\sum_{i\in\mathbb{Z}}U_{i}(t)\mathbf{1}_{I_i}(x)$, $\tilde{q}(x,t)=\sum_{i\in\mathbb{Z}}Q_{i}(t)\mathbf{1}_{I_i}(x)$, and the fluxes
$(\hat h_u,\hat h_q)$ are defined in \eqref{Lflux}. Let us insert $\tilde{u}$ and $\tilde{q}$ into the system \eqref{systpwc}, and use the flux \eqref{Lflux}
to get
\begin{align}\label{ggg}
\begin{cases}
\frac{d}{dt}U_{i}\Delta x-\frac{g(U_{i+1})-g(U_{i})}{U_{i+1}
-U_{i}}\frac{Q_{i+1}+Q_{i}}{2}-c_{12}(Q_{i+1}-Q_{i})\\[0.2cm]
\qquad\qquad+\frac{g(U_{i})-g(U_{i-1})}{U_{i}-U_{i-1}}\frac{Q_{i}
+Q_{i-1}}{2}+c_{12}(Q_{i}-Q_{i-1})=0,\\[0.2cm]
Q_{i}\Delta x-\frac{g(U_{i+1})+g(U_{i})}{2}+c_{12}(U_{i+1}-U_{i})\\[0.2cm]
\qquad\qquad+\frac{g(U_{i})+g(U_{i-1})}{2}-c_{12}(U_{i}-U_{i-1})=0.
\end{cases}
\end{align}
Let us choose the function $c_{12}$ to be
\begin{align}\label{c12}
c_{12}(U_{i},U_{i-1})=\frac{1}{2}\frac{g(U_{i})-g(U_{i-1})}{U_{i}-U_{i-1}}.
\end{align}
Inserting \eqref{c12} into \eqref{ggg} then leads to
\begin{align*}
\begin{cases}
\frac{d}{dt}U_{i}\Delta x-\frac{g(U_{i+1})-g(U_{i})}{U_{i+1}-U_{i}}Q_{i+1}
+\frac{g(U_{i})-g(U_{i-1})}{U_{i}-U_{i-1}}Q_{i}=0,\\[0.2cm]
Q_{i}=\frac{g(U_{i})-g(U_{i-1})}{\Delta x},
\end{cases}
\end{align*}
or
\begin{align*}
\frac{d}{dt}U_{i}\Delta x-\frac{1}{\Delta x}\frac{(g(U_{i+1})-g(U_{i}))^2}{U_{i+1}-U_{i}} +\frac{1}{\Delta x}\frac{(g(U_{i})-g(U_{i-1}))^2}{U_{i}-U_{i-1}}=0.
\end{align*}

For the full equation \eqref{1}, this choice of $c_{12}$ along with a forward difference approximation in time, lead to the following piecewise constant LDG
approximation:
\begin{align}\label{Ls1}
\begin{split}
&\frac{U_i^{n+1}-U_i^{n}}{\Delta t}+\frac{\hat{f}(U_i^n,U_{i+1}^n) -\hat{f}(U_{i-1}^n,U_{i}^n)}{\Delta x}\\&-\frac{1}{\Delta
x^2}\frac{(g(U^n_{i+1})-g(U^n_{i}))^2}{U_{i+1}^n-U_{i}^n} +\frac{1}{\Delta x^2}\frac{(g(U^n_{i})-g(U^n_{i-1}))^2}{U_{i}^n-U_{i-1}^n} =\frac{1}{\Delta
x}\sum_{j\in\Z}G^i_jU_j^n.
\end{split}
\end{align}
\begin{remark}
We do not prove convergence for the numerical method \eqref{Ls1}. However, we note that
\begin{align*}
\begin{split}
\left(\frac{dg}{du}\right)^2=\frac{dA}{du},
\end{split}
\end{align*}
since $g=\int^{u}\sqrt{a}$ and $A=\int^{u}a$. Roughly speaking this means that
\begin{align*}
\begin{split}
\frac{(g(U^n_{i+1})-g(U^n_{i}))^2}{U_{i+1}-U_{i}}\approx A(U^n_{i+1})-A(U^n_{i}),
\end{split}
\end{align*}
and hence that \eqref{Ds3} and \eqref{Ls1} are closely related. Experiments indicates that the two methods produce similar solutions (cf.~Figure
\ref{figure4}).
\end{remark}

\section{Numerical experiments}

We conclude this paper by presenting some experimental results obtained using the fully explicit (piecewise constant) numerical methods \eqref{scheme} and
\eqref{Ls1}, and the DDG method \eqref{Ds2} with fully explicit third order Runge-Kutta time discretization and piecewise constant, linear, and quadratic
elements. In the computations we have imposed a zero Dirichlet boundary condition on the whole exterior domain $\{|x|>1\}$. In all the plots, the dotted line
represents the initial datum while the solid one (or the dashed-dotted one in Figure \ref{figure4}) the numerical solution at $t=T$.

\begin{remark}
The operator $\Levy[\hat u]$ requires the evaluation of the discrete solution $\hat u$ on the whole real axis, thus making necessary the use of some
localization procedure. In our numerical experiments we have confined the nonlocal operator $\Levy[\cdot]$ to the domain $\Omega=\{|x|\leq 1\}$. That is to
say, for each grid point $(x_i,t_n)\in\Omega\times (0,T)$ we have computed the value of $\hat u$ at time $t_{n+1}$ by using only the values $\hat u(x_i,t_n)$
with $x_i\in\Omega$.
\end{remark}

We consider two different sets of data taken from \cite{Evje/Karlsen}. In Example 1 we take
\begin{align}\label{ex1}\tag{\textbf{Ex}.1}
\begin{split}
f_1(u)&=u^{2},\\
a_1(u)&=
\begin{cases}
0&\text{for} \ u\leq0.5\\
2.5u-1.25&\text{for} \ 0.5<u\leq0.6\\
0.25&\text{for} \ u>0.6,
\end{cases}\\
u_{0,1}(x)&=
\begin{cases}
0&\text{for} \ x\leq-0.5\\
5x+2.5&\text{for} \ -0.5<x\leq-0.3\\
1&\text{for} \  -0.3<x\leq0.3\\
2.5-5x&\text{for} \ 0.3<x\leq0.5\\
0&\text{for} \ x>0.5.
\end{cases}
\end{split}
\end{align}
In Example 2 we choose
\begin{align}\label{ex2}\tag{\textbf{Ex}.2}
\begin{split}
f_2&=\frac{1}{4}\,f_1,\\
a_2&=4\,a_1,\\
u_{0,2}(x)&=
\begin{cases}
1&\text{for} \ x\leq-0.4\\
-2.5x&\text{for} \ -0.4<x\leq0\\
0&\text{for} \ x>0.
\end{cases}
\end{split}
\end{align}
Furthermore, in Example 3 we use
\begin{align}\label{ex3}\tag{\textbf{Ex}.3}
\begin{split}
f_3(u)&=u,\\
a_3(u)&=0.1,\\
u_{0,3}(x)&=\exp{\left(-\left(\frac{x}{0.1}\right)^2\right)}.
\end{split}
\end{align}

The numerical results are presented in Figure \ref{fig1}, \ref{fig2}, \ref{figure4}, and \ref{fig_add}. The results confirm what we expected: the solutions of
the initial value problem \eqref{1} can develop shocks in finite time (this feature has been proved in \cite{Alibaud/Droniou/Vovelle} for the case $a=0$). In
Figure \ref{fig1} and \ref{fig2} you can see how the presence of the fractional diffusion $\Levy$ influences the shock's size and speed. In Figure
\ref{fig_add} you can see how the accuracy of DDG method \eqref{Ds2} improves when high-order polynomials are used ($k=0,1,2$).

\begin{figure}[h]
\subfigure[$u_t+f(u)_x=(a(u)u_x)_x$]{
\includegraphics[scale=0.4]{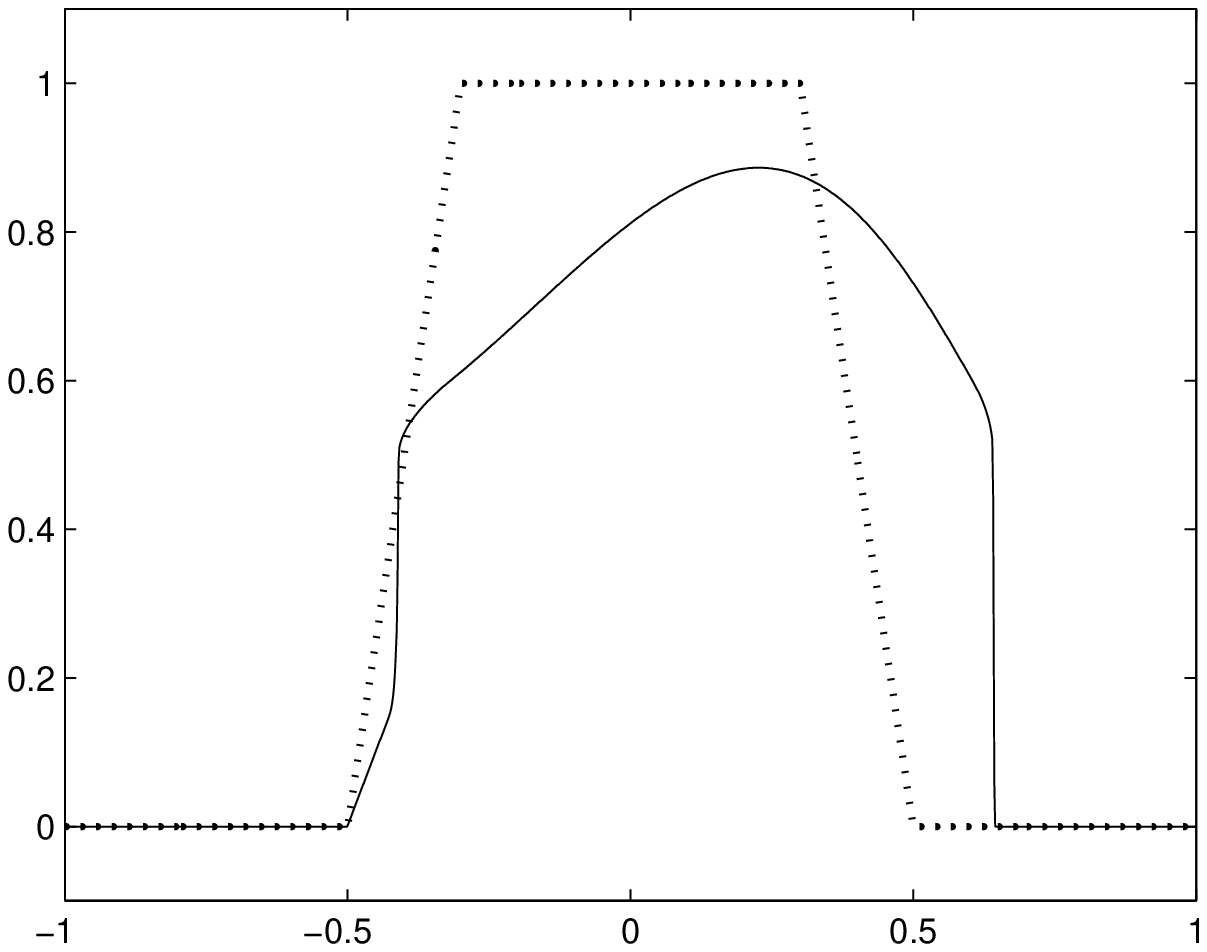}}
\subfigure[Equation \eqref{1} with $\lambda=0.5$]{
\includegraphics[scale=0.4]{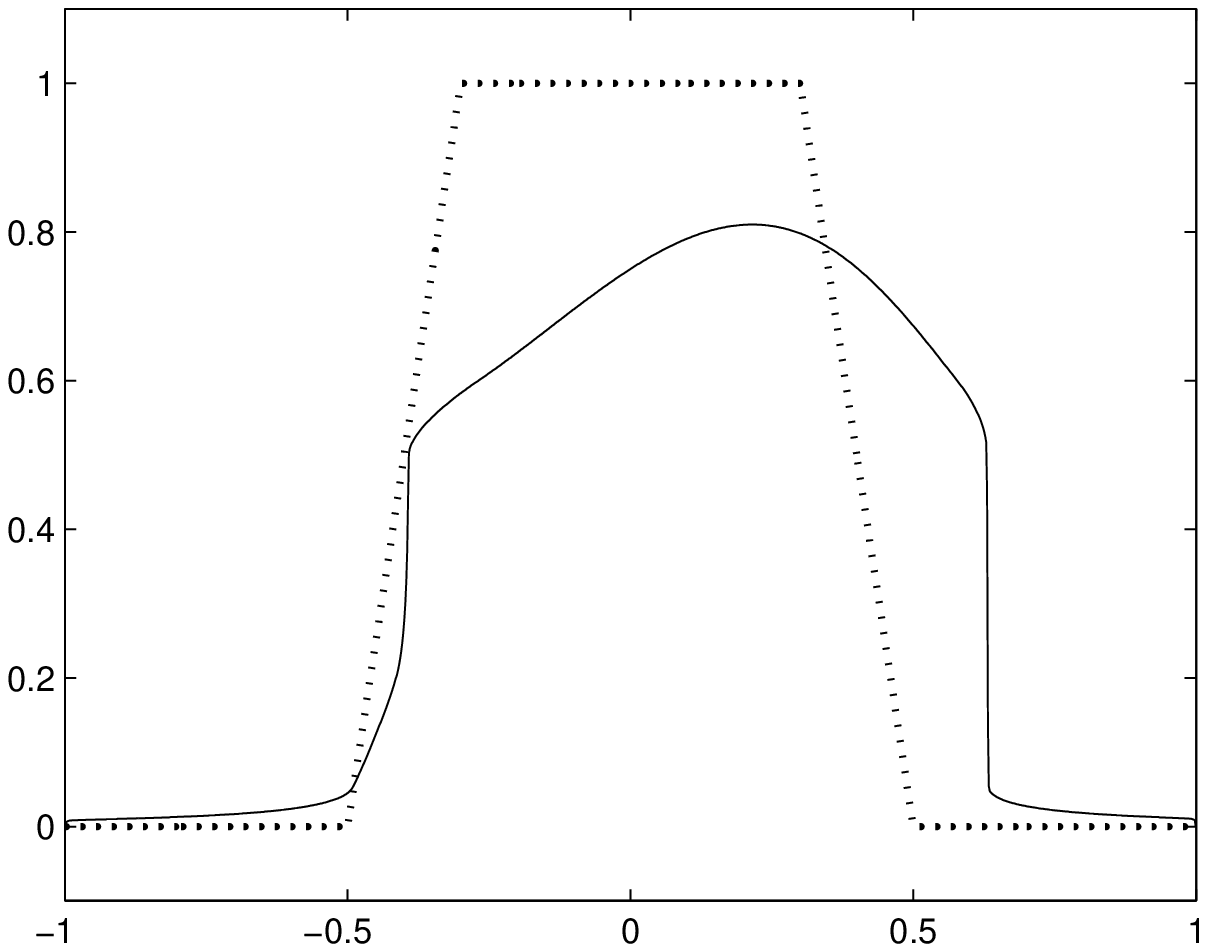}}
\caption{(\ref{ex1}): $T=0.15$ and $\Delta x=1/640$.}\label{fig1}
\end{figure}
\begin{figure}[h]
\subfigure[$u_t+f(u)_x=(a(u)u_x)_x$]{
\includegraphics[scale=0.4]{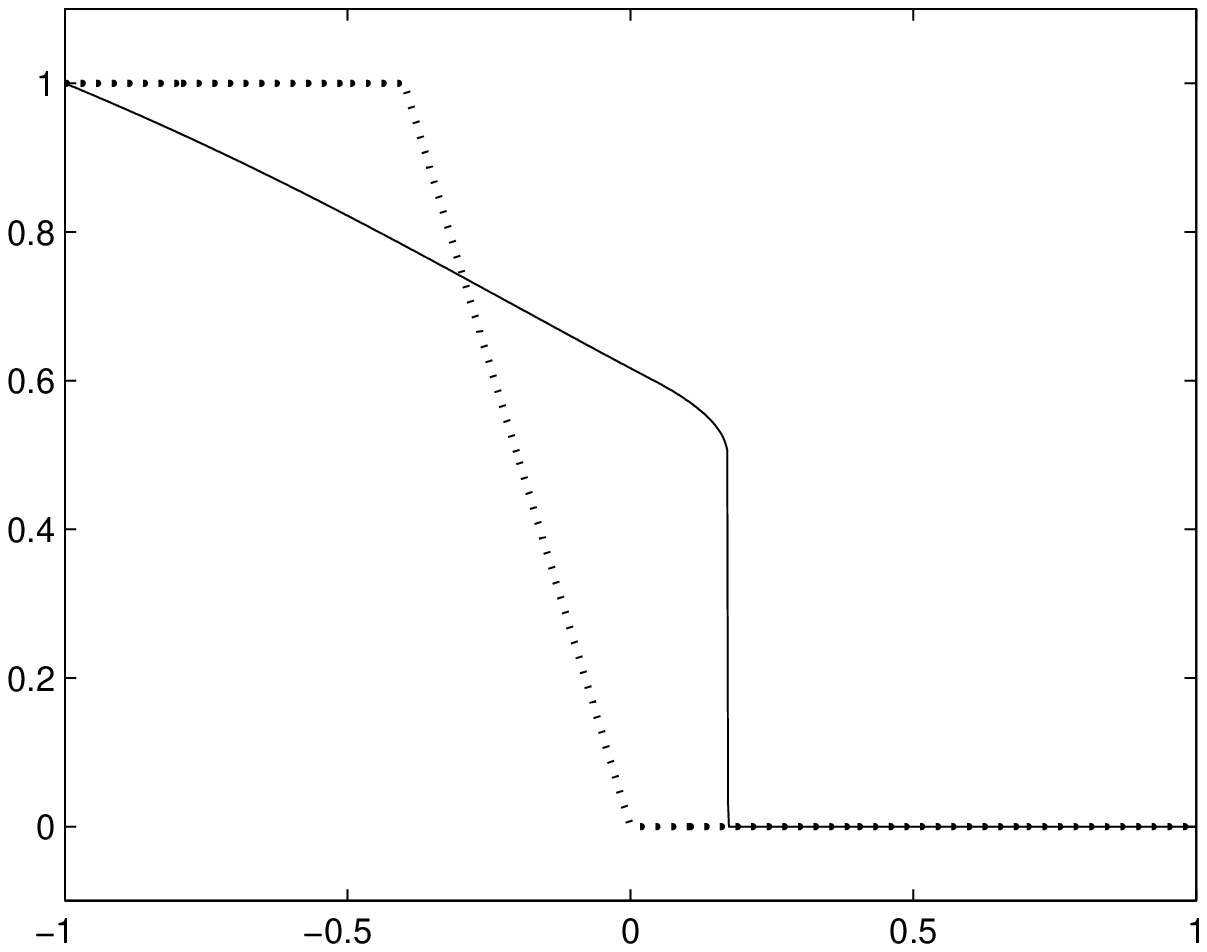}}
\subfigure[Equation \eqref{1} with $\lambda=0.5$]{
\includegraphics[scale=0.4]{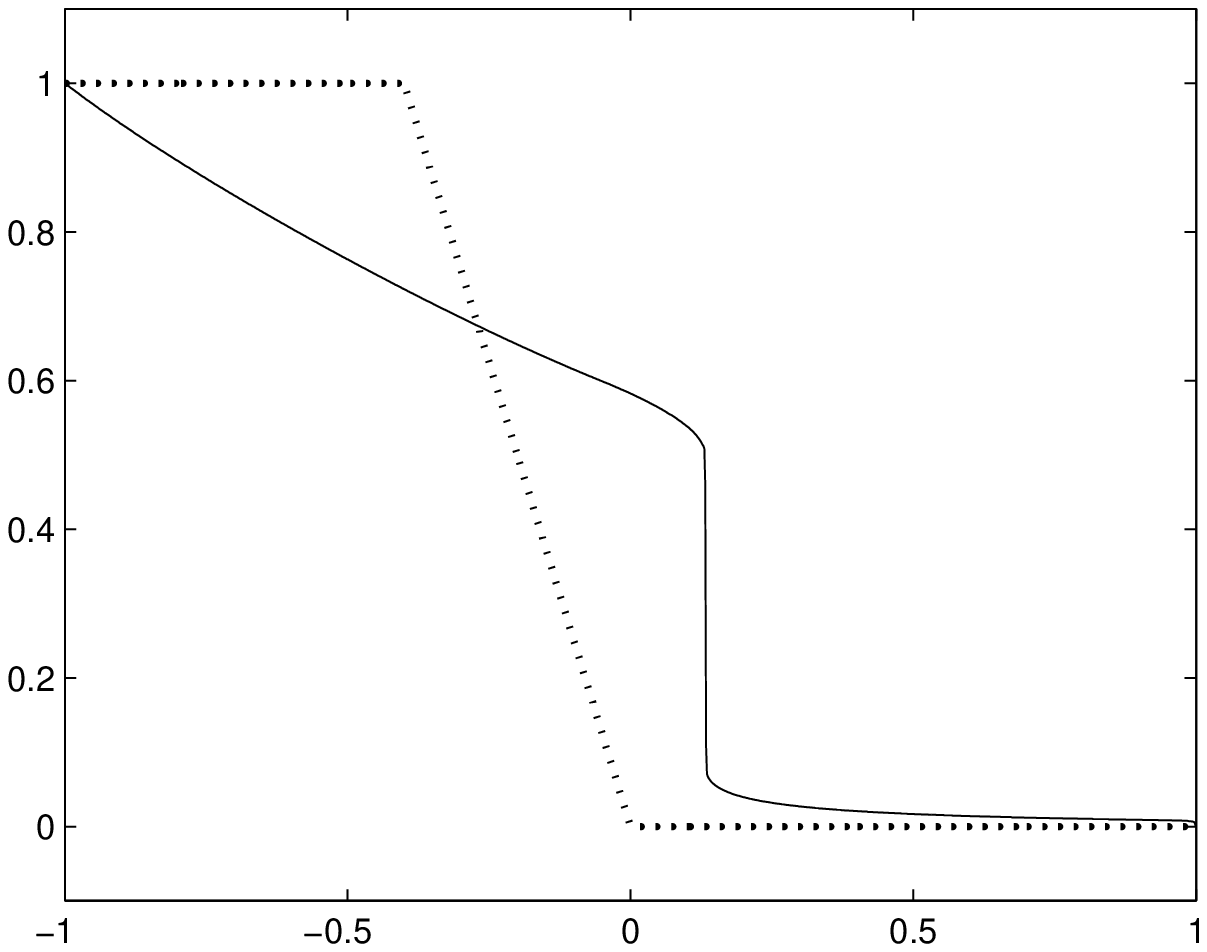}}
\caption{(\ref{ex2}): $T=0.25$ and $\Delta x=1/640$.}\label{fig2}
\end{figure}
\begin{figure}[h]
\subfigure[$T=0.0625$]{
\includegraphics[scale=0.4]{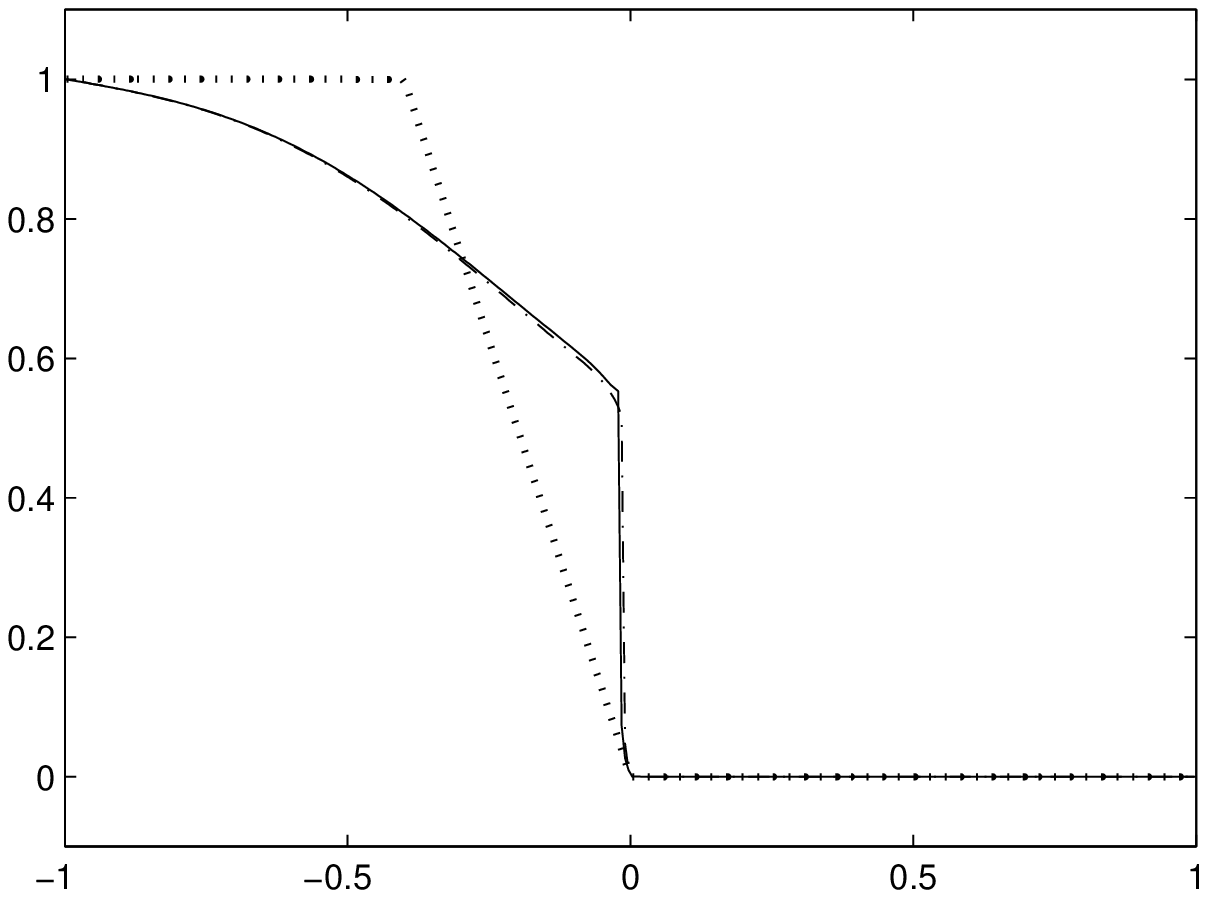}}
\subfigure[$T=1$]{
\includegraphics[scale=0.4]{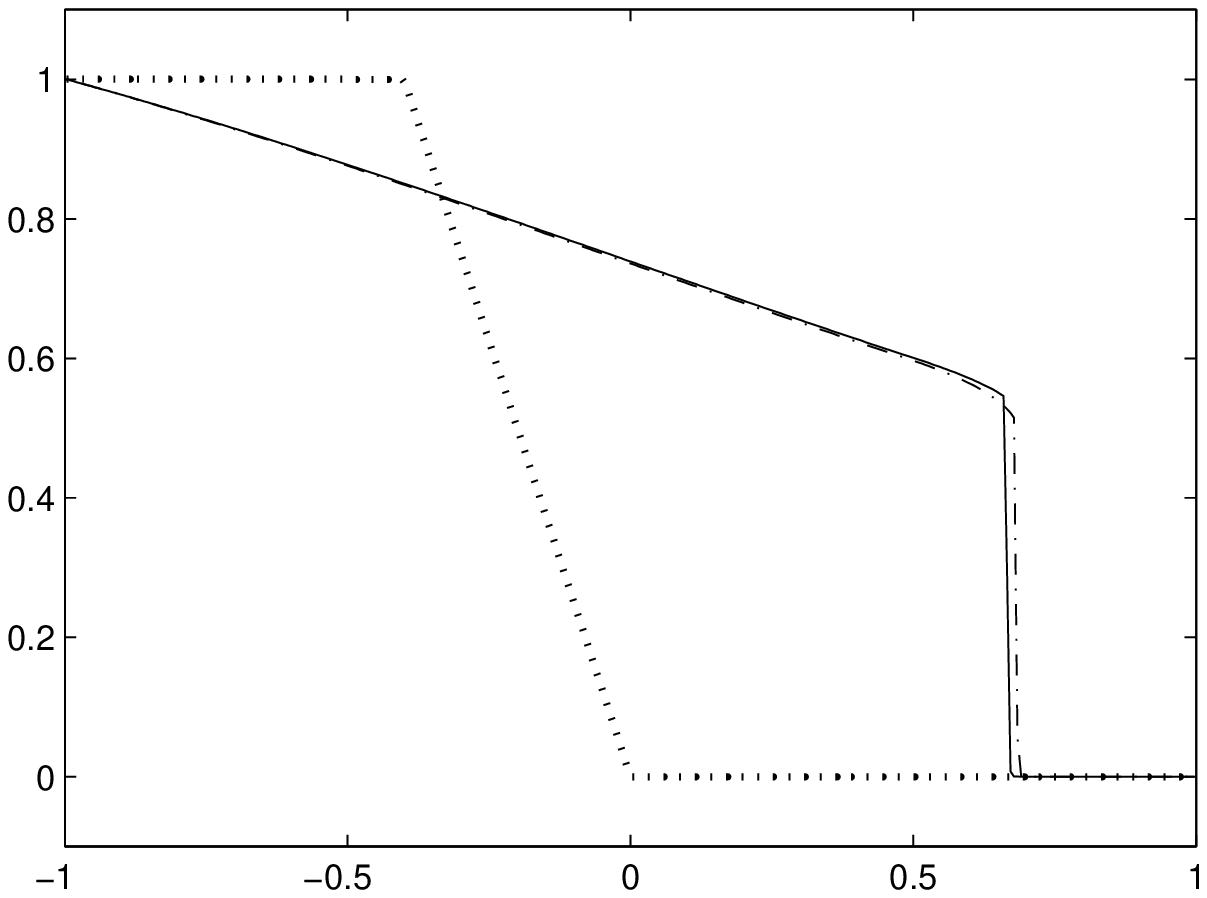}}
\caption{(\ref{ex2}): solutions of $u_t+f(u)_x=(a(u)u_x)_x$ at different times using methods \eqref{scheme} and \eqref{Ls1} ($\Delta x=1/160$).}\label{figure4}
\end{figure}

\begin{figure}[ht]
\subfigure[Piecewise constant ($k=0$) with $\Delta x=1/20$]{
\includegraphics[scale=0.4]{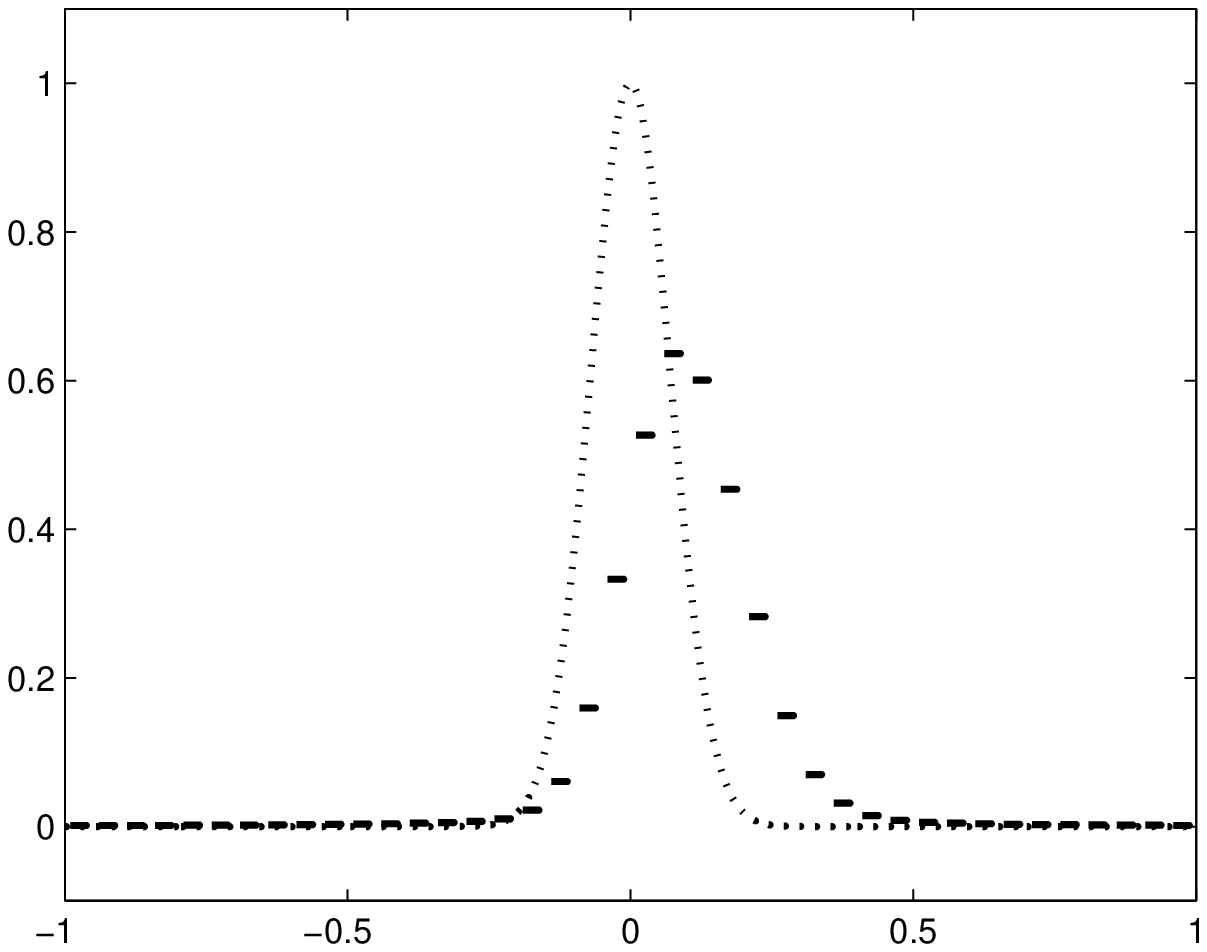}}
\subfigure[Piecewise linear ($k=1$) with $\Delta x=1/20$]{
\includegraphics[scale=0.4]{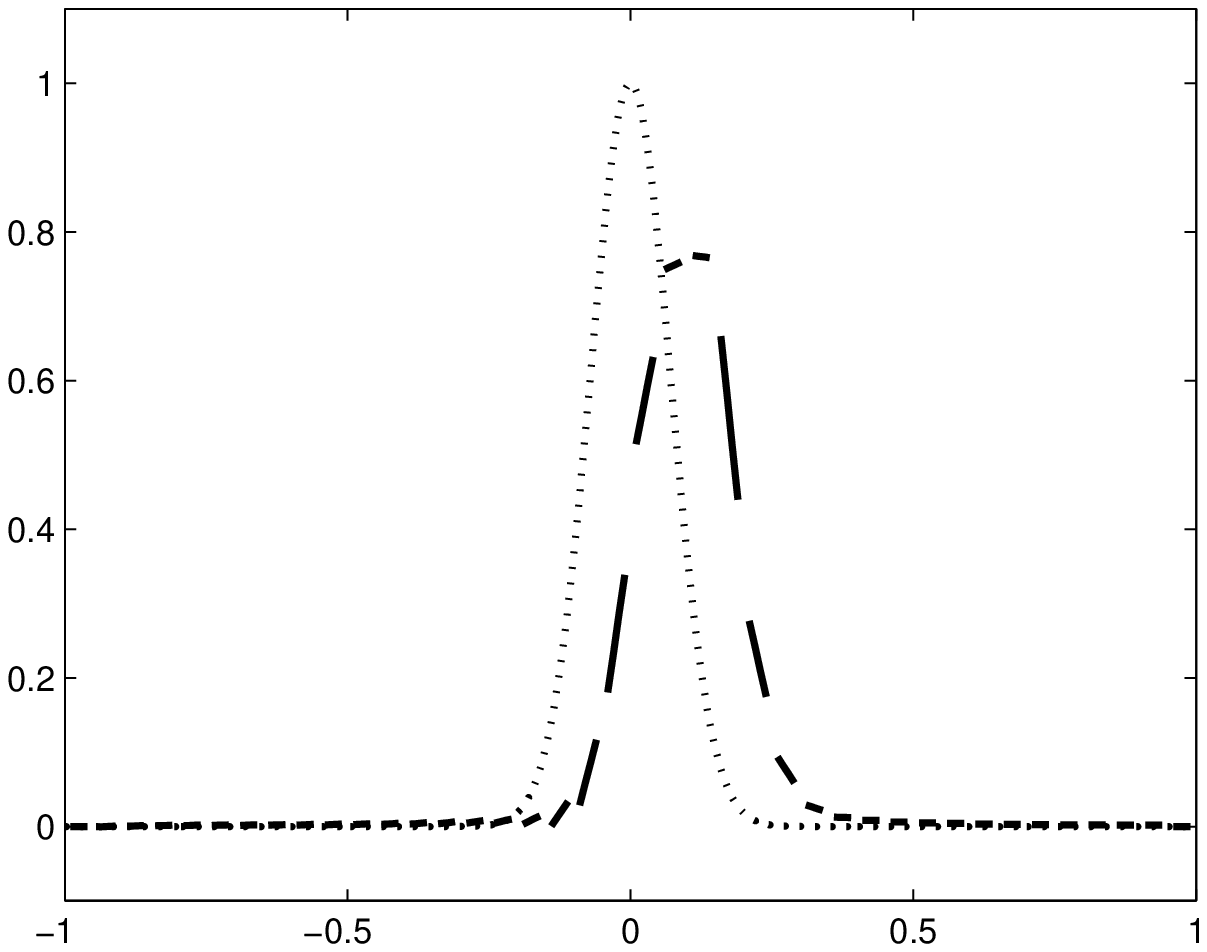}}\\
\subfigure[Piecewise quadratic ($k=2$) with $\Delta x=1/20$]{
\includegraphics[scale=0.4]{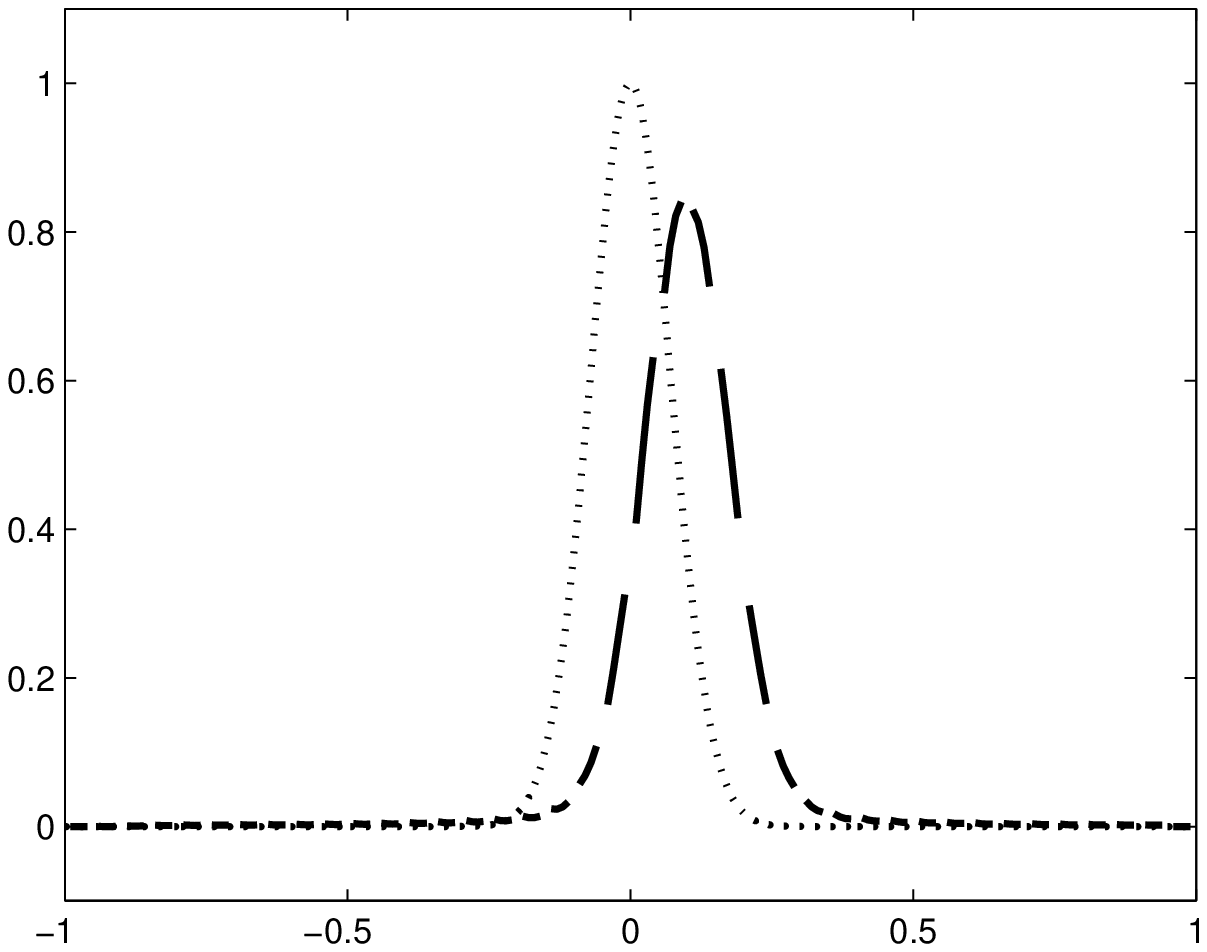}}
\subfigure[Solution computed using $\Delta x=1/640$]{
\includegraphics[scale=0.4]{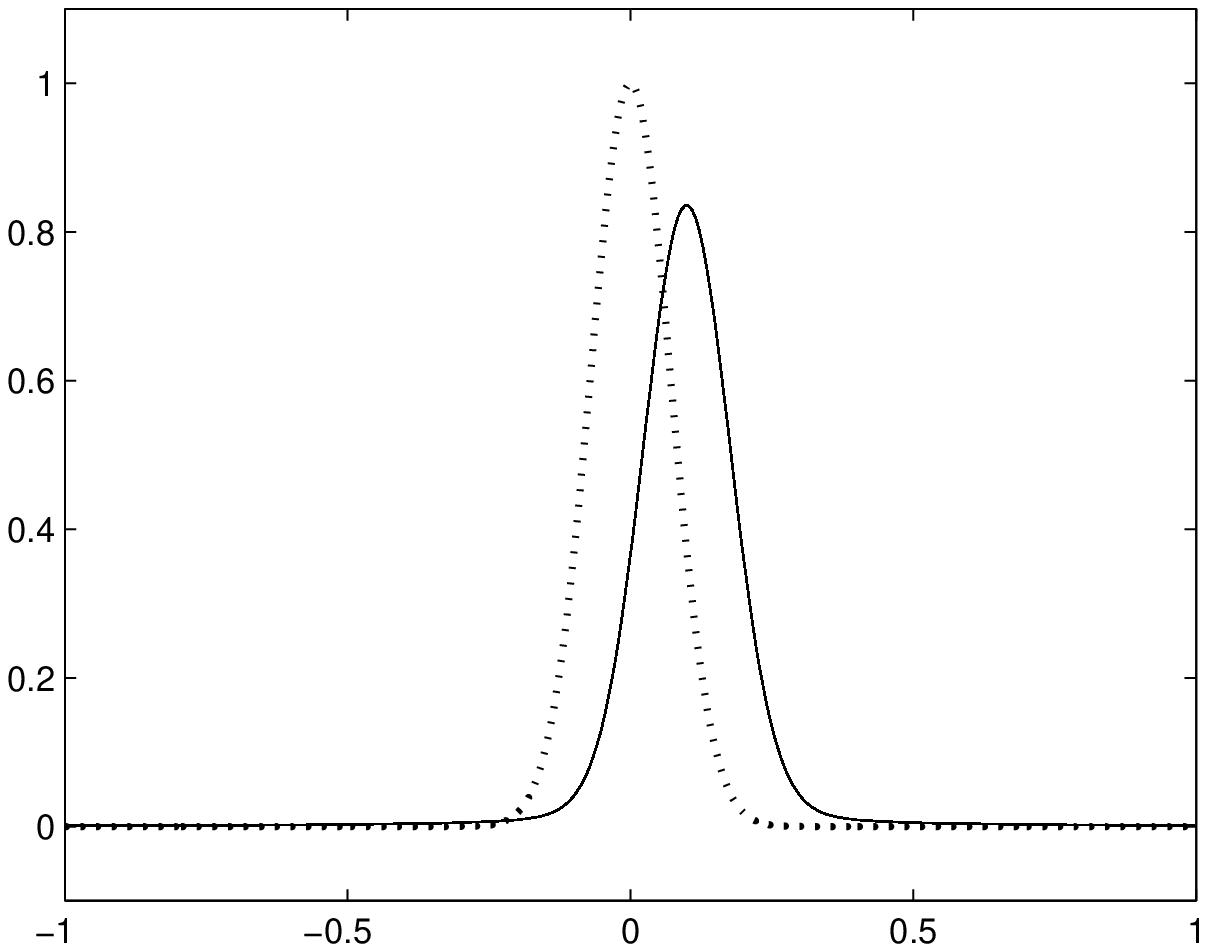}}
\caption{(\ref{ex3}): solutions at $T=0.1$ using $k=0,1,2$.}\label{fig_add}
\end{figure}

In Figure \ref{figure4}, the dashed-dotted curve represents method \eqref{scheme}, while the solid one represents method \eqref{Ls1}. The two numerical
solutions stay close, and numerical convergence has been observed for finer grids. Note that here we have set $b=0$ (no fractional diffusion) in order to
stress the differences between the two methods.

The numerical rate of convergence for the solutions in Figure \ref{fig1} (b), \ref{fig2} (b), and \ref{fig_add} (b) are presented in Table \ref{table1}. We
have measured the $L^p$-error
$$E_{\Delta x,p}=\|\hat{u}_{\Delta
  x}(\cdot,T)-\hat{u}_{e}(\cdot,T)\|^p_{L^{p}(\mathbb{R})},$$
where $\hat{u}_{e}$ is the numerical solution which has been computed using a very fine grid ($\Delta x=1/640$), the relative error
$$R_{\Delta
x,p}=\left(\frac{1}{\|\hat{u}_{e}(\cdot,T)\|^p_{L^{p}(\mathbb{R})}}\right)\,E_{\Delta x,p},$$ and the approximate rate of convergence $$\alpha_{\Delta
x,p}=\left(\frac{1}{\log 2}\right)\Big(\log E_{\Delta x,p}-\log E_{\Delta x/2,p}\Big).$$

\begin{table}[h]
\caption{Error, relative error, and numerical rate of convergence for
  the solutions in Figure \ref{fig1} (b), \ref{fig2} (b), and \ref{fig_add} (b).}\label{table1}
\begin{tabular}{c| c c c | c c c|c c c }
&\multicolumn{3}{c|}{Figure \ref{fig1} (b)}&\multicolumn{3}{c}{Figure
  \ref{fig2} (b)} & \multicolumn{3}{c}{Figure \ref{fig_add} (b)}\\
$\Delta x$ & $E_{\Delta x,1}$ & $R_{\Delta x,1}$ & $\alpha_{\Delta x,1}$ & $E_{\Delta x,1}$ & $R_{\Delta x,1}$ & $\alpha_{\Delta x,1}$& $E_{\Delta x,2}$ &
$R_{\Delta x,2}$ & $\alpha_{\Delta x,2}$
\\
    \hline
$\mathbf{1/10}$    & 0.0706 & 0.0942 & 0.97  &  0.0474   & 0.0550   & 0.86& 0.009000 & 0.093595 & 2.00 \\
$\mathbf{1/20}$    & 0.0361 & 0.0482 & 0.92 &  0.0261   & 0.0302   & 0.49& 0.002300 & 0.023493 & 1.85 \\
$\mathbf{1/40}$    & 0.0191 & 0.0255 & 0.57 &  0.0186   & 0.0216   & 0.52 & 0.000626 & 0.006518 & 1.54  \\
$\mathbf{1/80}$    & 0.0128 & 0.0171 & 0.60 &  0.0130   & 0.0150   & 0.42 & 0.000216 & 0.002248 & 1.10 \\
$\mathbf{1/160}$   & 0.0084 & 0.0113 & 0.76  &  0.0097   & 0.0112   & 0.77 & 0.000101 & 0.001052 & 1.04  \\
$\mathbf{1/320}$   & 0.0050 & 0.0066 & -      &  0.0057   & 0.0066   & -     & 0.000049 & 0.000510 & -         \\
    \hline
% $\mathbf{1/10}$    & 0.0706 & 0.0942 & 0.9650  &  0.0474   & 0.0550   & 0.8613& 0.009000 & 0.093595 & 1.9941 \\
% $\mathbf{1/20}$    & 0.0361 & 0.0482 & 0.9203 &  0.0261   & 0.0302   & 0.4863& 0.002300 & 0.023493 & 1.8496 \\
% $\mathbf{1/40}$    & 0.0191 & 0.0255 & 0.5732 &  0.0186   & 0.0216   & 0.5197 & 0.000626 & 0.006518 & 1.5355  \\
% $\mathbf{1/80}$    & 0.0128 & 0.0171 & 0.5986 &  0.0130   & 0.0150   & 0.4199 & 0.000216 & 0.002248 & 1.0963 \\
% $\mathbf{1/160}$   & 0.0084 & 0.0113 & 0.7564  &  0.0097   & 0.0112   & 0.7658 & 0.000101 & 0.001052 & 1.0435  \\
% $\mathbf{1/320}$   & 0.0050 & 0.0066 & -      &  0.0057   & 0.0066   & -     & 0.000049 & 0.001051 & -         \\
\end{tabular}
\end{table}

Our simulations seem to indicate numerical convergence of order less than one for the solutions depicted in Figure \ref{fig1} (b) and \ref{fig2} (b) (nonlinear
equations and piecewise constant elements), and numerical convergence of order higher than one for the solution depicted in Figure \ref{fig_add} (b) (linear
equation and piecewise linear elements). In the last case we do not seem to reach the expected value 2 (cf.~the statement of Theorem \ref{DDGlin}). This
deterioration of the numerical order of convergence for high-order polynomials has already been observed by the authors in \cite{Cifani/ERJ/KHK}. The reasons
behind this deterioration are still not clear.

Finally, let us remind the reader that no general results concerning the rate of convergence of numerical methods for nonlinear equations like \eqref{1} have
been produced so far. For more details, cf.~\cite{Chen/Karlsen}.

%%%%%%%%%%%%%%%%%%%%%%%%%%%%%%%%%%%%%%%%%%%%%%%%%%%%%%%%%%%%%%%%%%%%%%%%%%%%%%%%%%%%%%%%%%%%%%%%%%%%%%%%%%%%%%%%%%%%%%%%%%%%%%%%%%%%%%%%%%%%%%%%%%%%%%%%%%%%%%%%%%%%%%%%%%%%%
%%%%%%%%%%%%%%%%%%%%%%%%%%%%%%%%%%%%%%%%%%%%%%%%%%%%%%%%%%%%%%%%%%%%%%%%%%%%%%%%%%%%%%%%%%%%%%%%%%%%%%%%%%%%%%%%%%%%%%%%%%%%%%%%%%%%%%%%%%%%%%%%%%%%%%%%%%%%%%%%%%%%%%%%%%%%%

\appendix
\section{Technical lemmas}
In this appendix we state some technical results from \cite{Cifani/ERJ/KHK} that are needed in this paper. All proofs can be found in \cite{Cifani/ERJ/KHK}.
\begin{lemma}\label{lemmanonloc}
Let $\varphi,\phi\in L^1(\R)\cap BV(\R)$. Then there exists $C>0$ such that
\begin{align}
&\int_{\R}|\Levy[\varphi]|\leq c_\lambda C
\|\varphi\|_{L^1(\R)}^{1-\lambda}|\varphi|_{BV(\R)}^{\lambda},\label{kkk1}\\
&\int_\R\phi\Levy[\varphi]=\int_\R\varphi\Levy[\phi],\label{kkk2}\\
&\int_\R\varphi\Levy[\varphi]=-\frac{c_\lambda}{2}\int_\R\int_\R\frac{(\varphi(z)-\varphi(x))^2}{|z-x|^{1+\lambda}}\dz\dx.\label{kkk3}
\end{align}
Moreover, the last two identities also hold for all functions $\phi,\varphi\in H^{\lambda/2}(\R)$.
\end{lemma}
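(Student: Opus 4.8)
The plan is to handle the three identities in the order \eqref{kkk1}, \eqref{kkk3}, \eqref{kkk2}, because the absolute bound \eqref{kkk1} is precisely what legitimizes the Fubini manipulations needed for the other two when the data are only $L^1\cap BV$.

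First I would prove \eqref{kkk1} by a near/far splitting of the singular integral. Fix $r>0$ and split $\Levy[\varphi]$ into the contributions of $|z|\le r$ and $|z|>r$. Taking absolute values, integrating in $x$, and using Tonelli, on the far region I bound $\int_\R|\varphi(x+z)-\varphi(x)|\dx\le 2\|\varphi\|_{L^1(\R)}$ and integrate $|z|^{-1-\lambda}$ over $|z|>r$, producing a term of size $\propto c_\lambda\|\varphi\|_{L^1(\R)}\,r^{-\lambda}/\lambda$. On the near region I use the standard $BV$ estimate $\int_\R|\varphi(x+z)-\varphi(x)|\dx\le|z|\,|\varphi|_{BV(\R)}$, whose extra factor $|z|$ tames the singularity, so that $\int_{|z|\le r}|z|^{-\lambda}\dz$ yields a term $\propto c_\lambda|\varphi|_{BV(\R)}\,r^{1-\lambda}/(1-\lambda)$. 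Adding the two and optimizing the free parameter at $r=\|\varphi\|_{L^1(\R)}/|\varphi|_{BV(\R)}$ balances the two contributions and produces exactly the interpolation bound $c_\lambda C\|\varphi\|_{L^1(\R)}^{1-\lambda}|\varphi|_{BV(\R)}^{\lambda}$ with $C=C(\lambda)$. As a by-product this shows $\Levy[\varphi]\in L^1(\R)$ and, crucially, that $\int_\R\int_\R|\varphi(x+z)-\varphi(x)||z|^{-1-\lambda}\dz\dx$ converges, which licenses the interchanges below.

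For \eqref{kkk3} I would start from $\int_\R\varphi\,\Levy[\varphi]=c_\lambda\int_\R\int_\R\varphi(x)\frac{\varphi(x+z)-\varphi(x)}{|z|^{1+\lambda}}\dz\dx$ and change variables $y=x+z$ so the kernel becomes $|y-x|^{-1-\lambda}$. The key move is symmetrization: since the kernel is symmetric in $(x,y)$, renaming the integration variables gives a second expression with integrand $\varphi(y)(\varphi(x)-\varphi(y))$, and averaging the two while using $\varphi(x)(\varphi(y)-\varphi(x))+\varphi(y)(\varphi(x)-\varphi(y))=-(\varphi(x)-\varphi(y))^2$ collapses everything to $-\frac{c_\lambda}{2}\int_\R\int_\R\frac{(\varphi(y)-\varphi(x))^2}{|y-x|^{1+\lambda}}\dy\dx$, which is \eqref{kkk3}. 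The same device applied to the bilinear form reduces both $\int_\R\phi\,\Levy[\varphi]$ and $\int_\R\varphi\,\Levy[\phi]$ to $-\frac{c_\lambda}{2}\int_\R\int_\R\frac{(\phi(y)-\phi(x))(\varphi(y)-\varphi(x))}{|y-x|^{1+\lambda}}\dy\dx$, which is visibly symmetric in $\phi$ and $\varphi$, giving \eqref{kkk2}.

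The main obstacle is rigor: the split pieces $\int\varphi(x)\varphi(x+z)|z|^{-1-\lambda}$ are not individually integrable near $z=0$, so every reordering and substitution must be performed on the \emph{differenced} integrand, whose absolute integrability is exactly what \eqref{kkk1} guarantees (together with $\varphi\in L^\infty(\R)$, valid since an $L^1\cap BV$ function on $\R$ is bounded); alternatively one excises $\{|z|<\eps\}$, symmetrizes, and passes to the limit $\eps\to0$ by dominated convergence using the near-region dominating function from the first paragraph. Finally, to extend \eqref{kkk2} and \eqref{kkk3} to $\varphi,\phi\in H^{\lambda/2}(\R)$, I would pass to Fourier variables: by the multiplier definition \eqref{fourier} the pairing satisfies $\int_\R\phi\,\Levy[\varphi]=-\int_\R|\xi|^{\lambda}\hat\phi(\xi)\overline{\hat\varphi(\xi)}\dxi$, which is symmetric in $(\phi,\varphi)$ for real-valued functions and yields \eqref{kkk2}; setting $\phi=\varphi$ and invoking the standard Fourier identity for the Gagliardo seminorm $\int_\R\int_\R\frac{(\varphi(z)-\varphi(x))^2}{|z-x|^{1+\lambda}}\dz\dx=\mathrm{const}\cdot\int_\R|\xi|^{\lambda}|\hat\varphi(\xi)|^2\dxi$ (the constant being fixed so as to match $c_\lambda$) gives \eqref{kkk3}. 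Both identities then extend to all of $H^{\lambda/2}(\R)$ by density of Schwartz functions, on which each side is continuous in the $H^{\lambda/2}$ topology.
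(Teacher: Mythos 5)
Your proposal is correct and follows essentially the same route as the paper's own (sketched) proof: a near/far splitting of the singular integral, with the $BV$ seminorm taming the singular part and the $L^1$ norm controlling the far part, followed by optimization of the splitting parameter to obtain \eqref{kkk1}, and a change of variables/symmetrization of the symmetric kernel to obtain \eqref{kkk2} and \eqref{kkk3}. The Fourier-multiplier and density argument you add for the $H^{\lambda/2}(\R)$ extension is likewise the standard one behind the paper's final assertion.
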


To prove inequality \eqref{kkk1} one can split the nonlocal operator $\Levy[\cdot]$, using an auxiliary parameter $\epsilon>0$, into the sum of
$\Levy_\epsilon[\cdot]$, the operator containing the singularity, and $\Levy^\epsilon[\cdot]$, the remaining part of the original operator. The operator
$\Levy_\epsilon[\cdot]$ can then be treated using the control on the bounded variation, while the control on the $L^1$-norm is needed for the operator
$\Levy^\epsilon[\cdot]$. To obtain exactly estimate \eqref{kkk1} the optimal value of $\epsilon$ must be chosen. The proof of \eqref{kkk2} - and thus of
\eqref{kkk3} - is essentially a change of variables.

\begin{lemma}\label{matG}
For all $(i,j)\in\Z\times\Z$,
\begin{align*}
\begin{split}
\sum_{k\in\Z}|G^i_k|<\infty,\ \sum_{k\in\Z}G^i_k=0,\ G^i_j=G_i^j \text{ and $G^{i+1}_{j+1}=G_j^i$.}
\end{split}
\end{align*}
Moreover, $G^i_j\geq0$ whenever $i\neq j$, while
\begin{align}
G^i_i=-c_{\lambda}\left(\int_{|z|<1}\frac{dz}{|z|^{\lambda}}+\int_{|z|>1}\frac{dz}{|z|^{1+\lambda}}\right)\Delta x^{1-\lambda}\leq0.\label{kkk4}
\end{align}
\end{lemma}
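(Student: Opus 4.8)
The plan is to establish the six assertions by working directly from the definition $G^i_j=\int_{I_i}\Levy[\mathbf{1}_{I_j}]\dx$ together with the three identities of Lemma \ref{lemmanonloc}. The guiding observation, which removes every difficulty coming from the singularity of the kernel, is that for a.e.\ $x$ in the interior of $I_i$ the difference $\mathbf{1}_{I_j}(y)-\mathbf{1}_{I_j}(x)$ vanishes for all $y$ in a whole neighbourhood of $x$ (both points lie in the same cell), so that
\[
\Levy[\mathbf{1}_{I_j}](x)=c_\lambda\int_{\R}\frac{\mathbf{1}_{I_j}(y)-\mathbf{1}_{I_j}(x)}{|y-x|^{1+\lambda}}\dy
\]
is an ordinary convergent integral, with no principal value needed. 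In particular, for $i\neq j$ one has $\mathbf{1}_{I_j}(x)=0$ on $I_i$, whence $\Levy[\mathbf{1}_{I_j}](x)=c_\lambda\int_{I_j}|y-x|^{-(1+\lambda)}\dy>0$; this gives at once both $G^i_j\geq0$ for $i\neq j$ and the representation $G^i_j=c_\lambda\iint_{I_i\times I_j}|y-x|^{-(1+\lambda)}\dy\dx$.

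For translation invariance I would use that $\mathbf{1}_{I_{j+1}}=\mathbf{1}_{I_j}(\cdot-\Delta x)$ and that $\Levy$ commutes with translations, so $\Levy[\mathbf{1}_{I_{j+1}}](x)=\Levy[\mathbf{1}_{I_j}](x-\Delta x)$; integrating over $I_{i+1}$ and substituting $x\mapsto x+\Delta x$ yields $G^{i+1}_{j+1}=G^i_j$. Symmetry $G^i_j=G^j_i$ is immediate from the self-adjointness \eqref{kkk2} applied with $\varphi=\mathbf{1}_{I_j}$ and $\phi=\mathbf{1}_{I_i}$, both of which lie in $L^1(\R)\cap BV(\R)$. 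For absolute summability I would fix $i$ (by translation invariance, say $i=0$) and use the off-diagonal representation above: when $I_0$ and $I_k$ are separated one has $|y-x|\geq(|k|-1)\Delta x$, hence $G^0_k\leq c_\lambda\Delta x^{1-\lambda}(|k|-1)^{-(1+\lambda)}$, which is summable since $\lambda>0$; the two neighbouring terms $k=\pm1$ are finite because $\lambda<1$ makes the corner singularity of $\iint_{I_0\times I_1}|y-x|^{-(1+\lambda)}$ integrable, and the diagonal term is finite by \eqref{kkk1}. This establishes $\sum_k|G^i_k|<\infty$.

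The row--sum property then follows by summing the pointwise identities: for a.e.\ interior $x\in I_i$, combining $\Levy[\mathbf{1}_{I_i}](x)=-c_\lambda\int_{\R\setminus I_i}|y-x|^{-(1+\lambda)}\dy$ (the displayed integral with $\mathbf{1}_{I_i}(x)=1$) with $\sum_{k\neq i}\Levy[\mathbf{1}_{I_k}](x)=c_\lambda\int_{\R\setminus I_i}|y-x|^{-(1+\lambda)}\dy$ gives $\sum_k\Levy[\mathbf{1}_{I_k}](x)=0$ (morally $\Levy[1]=0$), and integrating over $I_i$, legitimate by the summability just proved, yields $\sum_kG^i_k=0$. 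Finally, the diagonal value follows from the row--sum identity as $G^i_i=-\sum_{k\neq i}G^i_k=-c_\lambda\iint_{I_i\times(\R\setminus I_i)}|y-x|^{-(1+\lambda)}\dy\dx$. To land exactly on the stated form I would instead integrate the symmetric (even-kernel) representation of $\Levy$ in $x$ first: a direct computation gives $\int_{I_i}\big[\mathbf{1}_{I_i}(x+z)+\mathbf{1}_{I_i}(x-z)-2\mathbf{1}_{I_i}(x)\big]\dx=-2\min(|z|,\Delta x)$, so that $G^i_i=-c_\lambda\int_{\R}\min(|z|,\Delta x)\,|z|^{-(1+\lambda)}\dz$, and rescaling $z=\Delta x\,w$ factors out $\Delta x^{1-\lambda}$ and splits the integral into $\int_{|w|<1}|w|^{-\lambda}\dw$ and $\int_{|w|>1}|w|^{-(1+\lambda)}\dw$, giving precisely \eqref{kkk4}. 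The only genuinely delicate point throughout is the justification of interchanging summation and integration against the singular kernel; the observation that the kernel singularity is cancelled on each cell reduces every such step to a routine Fubini/Tonelli argument, so I expect no serious obstacle.
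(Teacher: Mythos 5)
Your proof is correct, and it uses precisely the ingredients the paper itself identifies (the paper only sketches the argument, deferring details to \cite{Cifani/ERJ/KHK}): the explicit form of $\Levy$ acting on indicator functions for positivity, translation invariance and summability, the self-adjointness identity \eqref{kkk2} for symmetry, the bound \eqref{kkk1} for the diagonal term, and a precise evaluation of $G^i_i$ for \eqref{kkk4}. Your observation that the singularity is cancelled on each cell, together with the $-2\min(|z|,\Delta x)$ computation and the rescaling $z=\Delta x\,w$, correctly reproduces the stated diagonal value, so the proposal stands as a complete proof along the paper's intended lines.
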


Lemma \ref{matG} is essentially a consequence of the form of the operator $\Levy[\cdot]$ itself, and properties \eqref{kkk1} and \eqref{kkk2}. Property
\eqref{kkk4} comes from a precise evaluation of the integral $G_i^i$.

\begin{lemma} \label{Hs} If $\phi\in V^k\cap L^2(\R)$, then $\phi\in
  H^{\frac\lambda2}(\R)$ for all $\lambda\in(0,1)$, and
\begin{equation}\label{kkkk1}
\begin{split}
\|\phi\|_{H^{\frac\lambda2}(\R)}^2\leq\frac C{\Delta x}\|\phi\|_{L^2(\R)}^2.
\end{split}
\end{equation}
\end{lemma}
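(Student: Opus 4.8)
The plan is to reduce the double integral defining the $H^{\lambda/2}$-seminorm to a one-dimensional integral of the $L^2$-modulus of continuity of $\phi$, and then to estimate that modulus separately for small and large translation distances. Writing $r=z-x$ and applying Fubini, one has
\[
|\phi|_{H^{\lambda/2}(\R)}^2=\int_\R\frac{\omega(r)}{|r|^{1+\lambda}}\,dr,\qquad \omega(r):=\int_\R\big(\phi(x+r)-\phi(x)\big)^2\,dx.
\]
The trivial bound $\omega(r)\leq 4\|\phi\|_{L^2(\R)}^2$ (translation is an $L^2$-isometry, so $\|\tau_r\phi-\phi\|_{L^2}\leq 2\|\phi\|_{L^2}$) will handle the region $|r|>\Delta x$, while the singular region $|r|\leq\Delta x$ requires a finer estimate exploiting that $\phi$ restricts to a polynomial of degree $\leq k$ on each cell $I_i$.

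For $|r|\leq\Delta x$ I would split $\omega(r)$ into the contribution from pairs $x,x+r$ lying in the same cell and the contribution from pairs straddling a single grid point (at most one node is crossed, since $|r|\leq\Delta x$). For the same-cell part I write $\phi(x+r)-\phi(x)=\int_0^r\phi'(x+s)\,ds$ and use Cauchy--Schwarz together with Fubini to bound it by $r^2\|\phi'\|_{L^2(\R)}^2$; the inverse (Markov-type) inequality $\|p'\|_{L^2(I_i)}\leq C\Delta x^{-1}\|p\|_{L^2(I_i)}$ for $p\in P^k(I_i)$ then yields $\leq C\,r^2\Delta x^{-2}\|\phi\|_{L^2(\R)}^2$. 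For the straddling part, near each node the difference is controlled by the cell-wise $L^\infty$ norms on the two adjacent cells, and the scaling inverse inequality $\|p\|_{L^\infty(I_i)}\leq C\Delta x^{-1/2}\|p\|_{L^2(I_i)}$ bounds the contribution of each node by $C|r|\Delta x^{-1}$ times the local $L^2$-mass; summing over all nodes (each cell is counted a bounded number of times) gives $\leq C|r|\Delta x^{-1}\|\phi\|_{L^2(\R)}^2$. Since $r^2\Delta x^{-2}\leq |r|\Delta x^{-1}$ whenever $|r|\leq\Delta x$, both pieces combine to
\[
\omega(r)\leq C\,\frac{|r|}{\Delta x}\,\|\phi\|_{L^2(\R)}^2\qquad\text{for }|r|\leq\Delta x.
\]

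Finally I would insert the two regimes into the $r$-integral. On $|r|\leq\Delta x$ this bound gives $\int_{|r|\leq\Delta x}|r|^{-\lambda}\,dr\,\Delta x^{-1}\|\phi\|_{L^2}^2\leq C(1-\lambda)^{-1}\Delta x^{-\lambda}\|\phi\|_{L^2}^2$ (the integral converges because $\lambda<1$), while on $|r|>\Delta x$ the trivial bound gives $\int_{|r|>\Delta x}|r|^{-1-\lambda}\,dr\,\|\phi\|_{L^2}^2\leq C\lambda^{-1}\Delta x^{-\lambda}\|\phi\|_{L^2}^2$. Adding $\|\phi\|_{L^2}^2$ and using $\Delta x^{-\lambda}\leq \Delta x^{-1}$ (valid for $\Delta x\leq1$, or absorbed into $C$) produces the claimed estimate $\|\phi\|_{H^{\lambda/2}(\R)}^2\leq C\Delta x^{-1}\|\phi\|_{L^2(\R)}^2$. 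The main obstacle is the fine estimate of $\omega(r)$ for $|r|\leq\Delta x$: one must separate the interior (polynomial-smooth) behaviour from the jump behaviour at the grid nodes and invoke the two scale-invariant inverse inequalities with constants independent of $\Delta x$; once this is in place the remaining computation of the elementary integrals is routine.
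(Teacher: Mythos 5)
Your proof is correct and follows essentially the same route as the paper's (which is only sketched in the appendix and deferred to \cite{Cifani/ERJ/KHK}): you exploit the piecewise-polynomial structure exactly as indicated there, splitting the seminorm at the scale $\Delta x$ and separating the smooth in-cell behaviour (differentiability plus a scaled Markov inverse inequality) from the jump behaviour at the nodes (the $L^\infty$--$L^2$ inverse estimate, i.e.\ the bounded quadratic variation). The one caveat in your final step --- $\Delta x^{-\lambda}\le\Delta x^{-1}$ requires $\Delta x\le 1$ --- is harmless, since the stated inequality can in any case only hold for bounded $\Delta x$ (because $\|\phi\|_{H^{\lambda/2}(\R)}\ge\|\phi\|_{L^2(\R)}$).
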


Lemma \ref{Hs} is essentially a consequence of the fact that $\phi$ is a piecewise polynomial. The control on the $L^2$-norm together with the piecewise
structure of $\phi$ ensure that its quadratic variation is bounded. Then, the finite quadratic variation plus the fact that $\phi$ is differentiable inside
each interval $I_i$ return \eqref{kkkk1}.

% For one-column wide figures use
%\begin{figure}
% Use the relevant command to insert your figure file.
% For example, with the graphicx package use
%  \includegraphics{example.eps}
% figure caption is below the figure
%\caption{Please write your figure caption here}
%\label{fig:1}       % Give a unique label
%\end{figure}
%
% For two-column wide figures use
%\begin{figure*}
% Use the relevant command to insert your figure file.
% For example, with the graphicx package use
%  \includegraphics[width=0.75\textwidth]{example.eps}
% figure caption is below the figure
%\caption{Please write your figure caption here}
%\label{fig:2}       % Give a unique label
%\end{figure*}

% Non-BibTeX users please use
%\begin{thebibliography}{}
%
% and use \bibitem to create references. Consult the Instructions
% for authors for reference list style.
%
%\bibitem{RefJ}
% Format for Journal Reference
%Author, Article title, Journal, Volume, page numbers (year)
% Format for books
%\bibitem{RefB}
%Author, Book title, page numbers. Publisher, place (year)
% etc
%\end{thebibliography}

\end{document}
% end of file template.tex